\newtheorem{theo}{Theorem}[section]
\newtheorem{lemma}[theo]{Lemma}
\newtheorem{cor}[theo]{Corollary}
\newtheorem{prop}[theo]{Proposition}
\theoremstyle{definition}
\newtheorem{defi}[theo]{Definition}
\newtheorem{example}[theo]{Example}
\newtheorem{question}[theo]{Question}
\newtheorem{remark}[theo]{Remark}
\def\e{{\epsilon}}
\def\a{{\alpha}}
\newcommand{\B}{{\mathbb{B}}}
\newcommand{\C}{{\mathbb{C}}}
\newcommand{\R}{{\mathbb{R}}}
\newcommand{\Z}{{\mathbb{Z}}}
\newcommand{\K}{{\mathbb{K}}}
\newcommand{\BS}{\mathbb{S}}
\newcommand{\D}{\mathbb{D}}
\newcommand{\LL}{\mathcal{L}}
\newcommand{\set}[2]{\ensuremath{\{\,{#1}\mid {#2}\,\}}}
\DeclareMathOperator{\Ima}{Im}
\DeclareMathOperator{\Cone}{Cone}
\begin{document}

\title[Fibration theorems for non-isolated singularities]{Fibration theorems \`a la Milnor for differentiable maps with non-isolated singularities}


\author[J.~L.~Cisneros-Molina]{Jos\'e Luis Cisneros-Molina}
\address{Instituto de Matem\'aticas, Unidad Cuernavaca\\ Universidad Nacional Aut\'onoma de M\'exico\\ Avenida Universidad s/n, Colonia Lomas
de Chamilpa\\ Cuernavaca, Morelos, Mexico.}
\curraddr{}
\email{jlcisneros@im.unam.mx}
\thanks{The first author is Regular Associate of the Abdus Salam International Centre for Theoretical Physics, Trieste, Italy. Supported by CONACYT~253506.
}

\author[A.~Menegon]{Aur\'elio Menegon}
\address{Universidade Federal da Para\'iba \\ Departamento de Matem\'atica \\ CEP 58051-900, Jo\~ao Pessoa - PB, Brazil}
\curraddr{}
\email{aurelio@mat.ufpb.br}
\thanks{The second author had partial support from CNPq (Brazil).}

\author[J.~Seade]{Jos\'e Seade}
\address{Instituto de Matem\'aticas\\ Universidad Nacional Aut\'onoma de M\'exico\\ \'Area de la Investigaci\'on Cient\'ifica\\
Circuito Exterior, Ciudad Universitaria\\ Coyoac\'an, 04510, Ciudad de M\'exico, Mexico.}
\curraddr{}
\email{jseade@im.unam.mx}
\thanks{The third author was supported by CONACYT~282937 and UNAM-DGAPA-PAPIIT~IN110517}

\author[J.~Snoussi]{Jawad Snoussi}
\address{Instituto de Matem\'aticas, Unidad Cuernavaca\\ Universidad Nacional Aut\'onoma de M\'exico\\ Avenida Universidad s/n, Colonia Lomas
de Chamilpa\\ Cuernavaca, Morelos, Mexico.}
\curraddr{}
\email{jsnoussi@im.unam.mx}
\thanks{The last author was supported by UNAM-DGAPA-PAPIIT 113817}

\subjclass[2010]{Primary 14D06, 14B05, 14J17, 32S55, 58K05, 58K15}

\keywords{Milnor fibration, $d$-regularity, linear discriminant, sub-analytic.}

\date{\today}

\dedicatory{}

\begin{abstract}
We prove fibration theorems \`a la Milnor for $C^\ell$ real maps with non isolated critical values. 
We study the situation for maps with linear discriminant, and prove that the concept of 
$d$-regularity is the key point for the existence of a Milnor fibration on the sphere. We also explain how one can modify the target space by homeomorphisms 
to linearize a general discriminant. Whenever the composed map is $d$-regular one has fibration on the sphere. Plenty of examples are discussed along the text, particularly the interesting family of functions $(f,g)\colon \R^n \to \R^2$ of the type 
\[
 (f,g) = \left( \sum_{i=1}^n a_i x_i^p \, , \, \sum_{i=1}^n b_i x_i^q \right) \, ,
\]
where $a_i, b_i \in \R$ are constants in generic position and $p,q \geq 2$ are integers.
\end{abstract}

\maketitle

\section{Introduction}  

We study the topological behaviour of the non-critical levels of differentible functions near a critical point.

As motivation, let $f\colon (\R^n,0) \to (\R^k,0)$ be a real analytic map, $n\geq k\geq2$, with $0 \in \R^n$ an isolated critical point and dim($f^{-1}(0)) > 0$.  By the implicit function theorem, for every sufficiently small sphere $\BS_\epsilon^{n-1}$ around $0$, one has the following \textit{transversality property}: there exists $\delta > 0$ such that for all $t\in \R^k$ with $||t|| \le \delta$ the  fiber $f^{-1}(t)$ meets $\BS_\epsilon^{n-1}$ transversally. Hence, by the Relative Ehresmann Fibration Theorem (see \cite[p.~23]{Lamotke:TCPVASL}), one has a locally trivial fibration restricting $f$ to the ``tube'':
\begin{equation}\label{eq:MLRF} 
f\colon \big( \B_\e^n \cap (f^{-1}(\BS_\delta^{k-1} \setminus \{0\}) \big)  \longrightarrow \BS_\delta^{k-1} \setminus \{0\} \,,
\end{equation}
where $\B_\e^n$ is the closed ball in $\R^n$ of radius $\e$ around the origin.
In his now classical book \cite{Milnor:SPCH} Milnor proves (see \cite[Theorem~11.2]{Milnor:SPCH} or \cite[Theorem~2]{Milnor:ISH}) that \eqref{eq:MLRF} gives rise to  a locally trivial fibration 
\begin{equation}\label{eq:MRF} 
 \phi\colon\BS^{n-1}_\e\setminus f^{-1}(0)\to \BS^{k-1} \,,
\end{equation}
where $\BS^{k-1}$ is the sphere of radius $1$ around $0 \in \R^k$. Milnor proves this constructing an integrable non-zero vector field on $\B_\e\setminus f^{-1}(0)$ which is transverse to the fibres of $f$ and to the spheres centered at $0$ contained in $\B_\e$.
The integrable curves of this vector field carry diffeomorphically the tube to the complement of $f^{-1}(\B^{k}_\delta)\cap\BS^{n-1}_\e$ in $\BS^{n-1}_\e$ keeping its boundary 
fixed.

Milnor points out in his book that this theorem has two main weaknesses: The condition for an analytic map  $(\R^n,0) \to (\R^k,0)$ with $n\geq k\geq2$, to have an isolated critical point is very stringent, and even when this is satisfied, the projection map $\phi$ may not always be taken as the natural map $f/\Vert f\Vert$ (see \cite[p.~99]{Milnor:SPCH}). 
  
The above discussion was extended in \cite[Theorem~1.3]{Pichon-Seade:FMAfgb} to functions $f$ with an isolated critical value, a more general but still stringent setting. The authors proved\footnote{It was proved in the more general context where the domain of $f$ is a real analytic variety with an isolated singularity at $0$.} that if $f$ has Thom's $a_f$ property, which implies the \textit{transversality property} mentioned above, one also has fibration \eqref{eq:MLRF} on the tube, known as Milnor-L\^e fibration, and using Milnor's vector field one also has an equivalent fibration \eqref{eq:MRF} on the sphere.

The question of whether we can take  the projection $\phi$ of \eqref{eq:MRF} to be the natural one, $\phi=f/\Vert f\Vert$ was answered in \cite{Cisneros-Seade-Snoussi:d-regular} for analytic functions with an isolated critical value, by introducing the concept of $d$-regularity (see also \cite{Cisneros-Seade-Snoussi:MFCdRAMG}): The map $f$ is $d$-regular if and only if fibration \eqref{eq:MLRF} is equivalent to a fibration \eqref{eq:MRF} where the projection is  given by $\phi=f/\Vert f\Vert$.

The $d$-regularity condition actually springs from \cite{Cisneros-Seade-Snoussi:MilnorRef} and is defined by means of a canonical pencil as follows: For every line $0 \in \ell \subset \R^k$ consider the set
\[
 X_{\ell} = \{x \in \R^n \, \vert \, f(x) \in \ell \}\,.
\]
This is a pencil of real analytic varieties intersecting at $f^{-1}(0)$ and smooth away from it.
The map $f$ is said to be \textit{$d$-regular at $0$}  if there exists $\e_0>0$ such that every $X_\ell \setminus V$ is transverse to every sphere centred at $0$ and contained in $\B_{\e_0}$, 
whenever the intersection is not empty.

Several works have been published in the last two decades studying  Milnor fibrations for real analytic maps with either an isolated critical point or value,  see for instance \cite{Ruas-Seade-Verjovsky:RSMF, Pichon:RAGOBD3S, RS,Bodin-Pichon:MFBSFL, Pichon-Seade:FMAfgb, Cisneros-Molina:Join, Cisneros-Seade-Snoussi:d-regular, Oka:NDMF, dosSantos:ERMFQHS, Bobadilla-Menegon:BM, Dutertre-dosSantos:TRMFNIS,  dosSantos-etal:FHSMG} or the survey articles \cite{Cisneros-dosSantos:MilFib, Cisneros-Seade-Snoussi:RSCG, Cisneros-Seade-Snoussi:MFCdRAMG, Seade:onfto}.

In this paper we envisage the general case of  functions  (analytic or not) with arbitrary critical set (cf. \cite{Cisneros-Grulha-Seade:OTRAM,Menegon-Seade:OLMFRAM}). Let 
$f \colon (\R^n, 0) \to  (\R^k, 0)$, $n>k \ge 2$ with a critical point at $0$, of class $C^\ell$, $\ell \ge 1$, and $\dim(f^{-1}(0)) > 0$. Our first result is (Theorem~\ref{thm:ML.f}) that $f$  has the transversality property if and only if it admits local Milnor-L\^e fibrations in tubes over the complement of the discriminant $\Delta_f$.
 
Let  $f\colon (\R^n,0) \rightarrow (\R^k,0)$ be an analytic map with  critical point at $0 \in \R^n$ and consider small balls $\B_\e^n\subset\R^n$ and $\B_\delta^k\subset\R^k$ with $0<\delta\ll\e$. Let $\Delta_\e$ be the \textit{extended discriminant} of $f$ with respect to $\B_\e^n$, which consists of the images by $f$ of critical points of $f$ in the interior of $\B_\e^n$ together with the images of the critical points
of the restriction of $f$ to the boudary sphere $\BS_\e^{n-1}$ (see Section~\ref{sec:FT}). Following \cite[\S IV.4.4]{Pham:SingInt}, in \cite[Corollary~2.2]{Cisneros-Grulha-Seade:OTRAM} was proved that the restriction of $f$ to the tube
\begin{equation}\label{eq:FT}
f\colon \B_\e\cap f^{-1}(\BS_\delta^{k-1}\setminus\Delta_\e)\to \BS_\delta^{k-1}\setminus\Delta_\e,
\end{equation}
is a locally trivial fibration, where $\BS_\delta^{k-1}=\partial\B_\delta^k$. Hence, in this general setting there is always a fibration on the tube. Moreover, if $f$ satisfies the \textit{transversality property}, 
there is no contribution to the extended discriminant $\Delta_\e$ by points on the sphere $\BS_\e^{n-1}$, the extended discriminant is just the discriminant of $f$ 
and the fibration on the tube essentially does not depend on $\e$. We remark that in \cite{Menegon-Seade:OLMFRAM}  there is a simple criterium to decide when a map has the transversality condition.
However, Milnor's vector field on $\B_\e^n\setminus f^{-1}(\Delta_\e)$ not necessarily works to inflate the tube to the sphere, since in general, $\B_\e^n\setminus f^{-1}(\Delta_\e)$ is not invariant under its flow.

In this article we extend the discussion of existence of fibrations (in the tube and in the sphere) for differentiable functions of class $\C^{\ell}$, $\ell \ge 1$, with possibly non-isolated critical value. The fibration on the tube always exists, since \cite[Corollary~2.2]{Cisneros-Grulha-Seade:OTRAM} extends without problem to this setting. Concerning the fibration on the sphere, we answer the question below:

\begin{question}\label{q:notdr}
Let  $f\colon (\R^n,0) \rightarrow (\R^k,0)$ be an analytic map with  critical point at $0 \in \R^n$ and arbitrary critical set. Is there a condition on $f$ which ensures that there exists a fibration on the sphere?
\end{question}

The answer is $d$-regularity. We show that if for a map $(\R^n,0) \buildrel {f} \over {\to} (\R^k,0)$,  $n\geq k\geq 2$, there exists a local homeomorphism $(\R^k,0) \buildrel {h} \over {\to} (\R^k,0)$ 
such that the preimage of the discriminant $h^{-1}(\Delta_f)$ is linear and the composition $f_h=h^{-1}\circ f$ is $d$-regular, then the map $f_h/\Vert f_h\Vert$ is the projection of a fibration on 
the sphere minus the preimage of $\Delta_f$ (see Theorem~\ref{prop_5b} for the precise statement).
In this case, we say that $f$ is $d_h$-regular.  Every $d$-regular map is $d_h$-regular for $h$ the identity map. In the analytic case, this fibration is equivalent to fibration \eqref{eq:FT} on the tube.

Notice that  the discriminant $\Delta_f$ can have real codimension 1 and therefore its complement splits into finitely many connected components, say $S_1,\dots,S_r$. The topology of the fibers $f^{-1}(t) $ can change for values in different $S_i$.  This happens for instance for the map germs 
\[
 (f,g) = \left( \sum_{i=1}^n a_i x_i^2 \, , \, \sum_{i=1}^n b_i x_i^2 \right) \, ,
\]
where $a_i, b_i \in \R$ are constants in generic position, studied by L\'opez de Medrano in \cite{LopezdeMedrano:SHQM}. Notice that each sector determines an analytic family 
of smooth manifolds that degenerate to the singular fiber over $0$. If we take an arc joining two adjacent sectors we get a family of smooth compact manifolds that transform 
into other smooth compact manifolds as we cross the discriminant. It would be very interesting to determine how the topology changes as we move from one sector to another. 
This can clearly be seen in the example given in Subsection~\ref{ssec:non-anal} of a non-analytic $d_h$-regular map. 

\begin{remark}\label{rem:surjective}
Throughout this paper, we will assume that $f$ is \textit{locally surjective}, that is, the image of $f$ contains an open neighbourhood of the origin in $\R^p$,
and we shall not mention it all the time. Nevertheless, it is easy to see that in the general case the same results 
hold if one intersects the bases of the locally trivial fibrations with their image. This choice is to avoid a heavy notation.
\end{remark}

\begin{remark}\label{rem:nice.germ}
The results of this article extend to analytic map-germs $f\colon (\R^n,0) \to (\R^k,0)$ if we consider $f$ in the class of \textit{nice analytic map-germs} defined in \cite[Definition~2.2]{dosSantos-etal:FHSMG}
for which the discriminant is a well-defined set-germ at $0$, so it does not depend on the radius $\e$.
\end{remark}

\vskip.2cm

The article is organized as follows. In section~\ref{sec:FT}, we give the fibration on the tube (Proposition~\ref{prop:MLF}), we define the transversality property and present Theorem~\ref{thm:ML.f} mentioned above. 
In section~\ref{section_2}, we define when $f$ has linear discriminant and we exhibit examples. Then we extend the concept of $d$-regularity and the corresponding fibration theorems to this case 
(Theorem~\ref{prop_2} and Theorem~\ref{theo_a}).  In section~\ref{section_3}, we define conic homeomorphisms on the target of $f$ and $d_h$-regularity, and we give examples of both concepts. 
Finally in section~\ref{section_5}, for an arbitrary $f$, we discuss how using a conic homeomorphism one can linearize the discriminant of $f$. 
Then one can see if the linearized map is $d$-regular, in this case we get the main fibration theorem (Theorem~\ref{prop_5b}) and we apply it to the case of 
maps with isolated critical value and the transversality property (Theorem~\ref{prop_5}). We give several examples, including the family mentioned in the abstract and an example of a non-analytic map.
We include Appendix~\ref{app:EC} where we prove Lemma~\ref{lem:FoF=F}, which says that the composition of two $C^\ell$-locally trivial fibrations between smooth manifolds is a $C^\ell$-locally trivial fibration. 
It is used in the proof of Theorem~\ref{prop_2}.

\section{Fibration on the tube}\label{sec:FT}

Let $f\colon (\R^n,0)\to (\R^k,0)$, $n >k \ge 2$, be a map of class $C^\ell$ with $\ell\geq 1$ and a critical point at $0$.
Following \cite[\S2]{Cisneros-Grulha-Seade:OTRAM} we see that there always exists a fibration on a tube.

Assume that $f$ is locally surjective (see Remark~\ref{rem:surjective}). Equip $\R^n$ with a Whitney stratification adapted to $V=f^{-1}(0)$, and let $\B_{\e_0}^n$ be a closed  ball in $\R^n$, centred at $0$, of sufficiently small radius $\e_0>0$, so that every sphere
in this ball, centred at $0$, meets transversely every stratum of $V$. We call $\e_0$ a \textit{Milnor radius} for $f$ (at $0$).
In what follows, for $0<\e<\e_0$ we shall consider the restriction $f_{\e}$ of $f$ to the closed ball $\B_{\e}^n\subset\B_{\e_0}^n$.

Denote by $\Sigma_{\e}$ the intersection of the critical set of $f$ with the ball $\B_\e$ and set $\Delta_\e := f_\e(\Sigma_{\e})$, which we call the \textit{discriminant} of $f_{\e}$.
The discriminant $\Delta_\e$ is a subanalitic set and it may depend on the choice of the radius $\e$, as showed in \cite{dosSantos-etal:FHSMG}. 

Also denote by $\Sigma_{\e}(\BS_\e^{n-1})$ the set of critical points in $\BS_\e^{n-1}$ of the restriction $f_\e|_{\BS_\e^{n-1}}$.
Set $\hat{\Sigma}_\e := \Sigma_{\e}\cup\Sigma_{\e}(\BS_\e^{n-1})$ and denote by $\hat{\Delta}_\e := f(\hat{\Sigma}_\e)$ which we call the
\textit{extended discriminant}\footnote{In \cite[\S IV.4.4]{Pham:SingInt} $\hat{\Sigma}_\e$ is called the \textit{apparent contour at the source} and $\Delta_\e$ 
is called the \textit{apparent contour at the target} or just \textit{apparent contour}.} of $f$.

Let $\B^k_\delta$ be an open ball in $\R^k$ centred at $0$ of radius $0<\delta\ll\e$.

\begin{prop}[{\cite[Corollary~2.2]{Cisneros-Grulha-Seade:OTRAM}}]\label{prop:MLF}
 Let $f\colon (\R^n,0) \to (\R^k,0)$ with $n\geq k\geq 2$ be a map of class $C^\ell$ with $\ell\geq 1$. Then the restrictions:
 \begin{align}
 f_\e|\colon \B_\e^n \cap f_\e^{-1}(\mathring{\B}_\delta^k \setminus \hat{\Delta}_\e)  &\to \mathring{\B}_\delta^k \setminus \hat{\Delta}_\e \label{eq:MST}\\
\intertext{and}
f_\e|\colon \B_\e^n \cap f_\e^{-1}(\BS_\delta^{k-1} \setminus \hat{\Delta}_\e) &\to \BS_\delta^{k-1} \setminus \hat{\Delta}_\e \label{eq:MT}
\end{align}
are locally trivial fibrations.
\end{prop}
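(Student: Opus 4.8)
The plan is to deduce both fibrations from the Relative Ehresmann Fibration Theorem, exactly in the spirit of \cite[\S IV.4.4]{Pham:SingInt} and \cite[Corollary~2.2]{Cisneros-Grulha-Seade:OTRAM}, which we are allowed to invoke; the task here is really to check that the hypotheses of Ehresmann are met over $\mathring{\B}_\delta^k\setminus\hat\Delta_\e$ and over $\BS_\delta^{k-1}\setminus\hat\Delta_\e$. First I would fix a Milnor radius $\e_0$ and take $0<\e<\e_0$ and $0<\delta\ll\e$. The key point is that, by the very definition of $\hat\Delta_\e$ and $\hat\Sigma_\e$, over $\mathring{\B}_\delta^k\setminus\hat\Delta_\e$ the restriction $f_\e$ has no critical points in the interior of $\B_\e^n$, and its restriction to the boundary sphere $\BS_\e^{n-1}$ has no critical points either; equivalently, $f_\e$ is a submersion on $\mathring{\B}_\e^n\cap f_\e^{-1}(\mathring{\B}_\delta^k\setminus\hat\Delta_\e)$ and the fibers of $f_\e$ meet $\BS_\e^{n-1}$ transversally there. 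Hence the map
\[
f_\e|\colon \B_\e^n\cap f_\e^{-1}(\mathring{\B}_\delta^k\setminus\hat\Delta_\e)\longrightarrow \mathring{\B}_\delta^k\setminus\hat\Delta_\e
\]
is a proper submersion from a manifold with boundary whose restriction to the boundary is still a submersion, so the Relative Ehresmann Theorem (\cite[p.~23]{Lamotke:TCPVASL}) applies and gives \eqref{eq:MST}. Properness is immediate because the source is a closed and bounded subset of $\R^n$, hence compact over any compact in the base, and $C^\ell$-triviality holds because $f$ is of class $C^\ell$.

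Next I would obtain \eqref{eq:MT} by restricting the base from the punctured ball to the sphere $\BS_\delta^{k-1}$. Since $\BS_\delta^{k-1}\setminus\hat\Delta_\e$ is a $C^\ell$-submanifold of $\mathring{\B}_\delta^k\setminus\hat\Delta_\e$ (note $0\notin\BS_\delta^{k-1}$ and, shrinking $\delta$ if necessary, we may assume $\hat\Delta_\e$ meets $\BS_\delta^{k-1}$ in a subanalytic set in general position), the preimage $\B_\e^n\cap f_\e^{-1}(\BS_\delta^{k-1}\setminus\hat\Delta_\e)$ is a $C^\ell$-submanifold with boundary of $\B_\e^n\cap f_\e^{-1}(\mathring{\B}_\delta^k\setminus\hat\Delta_\e)$, and the restriction of a locally trivial fibration to a submanifold of the base, together with its preimage, is again a locally trivial fibration. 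Alternatively, one argues directly: over $\BS_\delta^{k-1}\setminus\hat\Delta_\e$ the map $f_\e$ is still a proper submersion with $f_\e|_{\BS_\e^{n-1}}$ a submersion, so Relative Ehresmann applies verbatim.

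The only genuinely delicate point is the choice of $\delta$ relative to $\e$: one must ensure that for $\delta$ small enough the image $\hat\Delta_\e\cap\mathring{\B}_\delta^k$ is ``conelike'' enough that removing it leaves a base over which the transversality on $\BS_\e^{n-1}$ holds uniformly, and that the sphere $\BS_\delta^{k-1}$ is transverse to $\hat\Delta_\e$ so that $\BS_\delta^{k-1}\setminus\hat\Delta_\e$ is a manifold. This is handled by the curve selection lemma for subanalytic sets applied to $\hat\Sigma_\e$, exactly as in \cite[\S2]{Cisneros-Grulha-Seade:OTRAM}: if transversality of the fibers of $f_\e$ with $\BS_\e^{n-1}$ failed over points arbitrarily close to $0$ and outside $\hat\Delta_\e$, a subanalytic arc would produce a critical point of $f_\e|_{\BS_\e^{n-1}}$ contributing to $\hat\Delta_\e$, a contradiction once $\delta$ is smaller than the distance from $0$ to the relevant subanalytic set. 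With $\delta$ so chosen, everything above goes through, and this is precisely the content of the cited corollary; I would simply refer to it for the details rather than reproduce the curve selection argument.
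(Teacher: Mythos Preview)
The paper does not give its own proof of this proposition; it is stated as a citation of \cite[Corollary~2.2]{Cisneros-Grulha-Seade:OTRAM}, with the remark that the argument there ``extends without problem'' to the $C^\ell$ setting. So there is nothing in the paper to compare your argument against, and the relevant question is simply whether your sketch is correct.

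Your first two paragraphs are exactly the right argument and match what the cited reference does: by the very definition of $\hat\Delta_\e=f(\hat\Sigma_\e)$, any fibre $f_\e^{-1}(y)$ with $y\notin\hat\Delta_\e$ is automatically free of interior critical points of $f$ and meets $\BS_\e^{n-1}$ transversally; properness follows from compactness of $\B_\e^n$; Relative Ehresmann for manifolds with boundary then gives \eqref{eq:MST}, and \eqref{eq:MT} follows by restriction to the open submanifold $\BS_\delta^{k-1}\setminus\hat\Delta_\e$ of the base.

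Your third paragraph, however, is both unnecessary and slightly off. There is no ``delicate point'' here and no curve selection lemma is needed: the failure you try to rule out (non-transversality of a fibre over a value outside $\hat\Delta_\e$) is impossible \emph{by definition}, since any such tangency produces a critical point of $f_\e|_{\BS_\e^{n-1}}$, whose image lies in $\hat\Delta_\e$. Likewise you do not need $\BS_\delta^{k-1}$ to be transverse to $\hat\Delta_\e$: since $\hat\Sigma_\e$ is closed in the compact set $\B_\e^n$, its image $\hat\Delta_\e$ is compact, hence closed, so $\BS_\delta^{k-1}\setminus\hat\Delta_\e$ is already an open submanifold of the sphere. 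Finally, note that in the purely $C^\ell$ setting $\hat\Sigma_\e$ need not be subanalytic, so the curve selection lemma would not even be available; fortunately, as just explained, it is not required. Simply delete that paragraph and your proof is clean.
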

If $f$ is analytic, then the fibrations above are smooth.

We can also consider a further condition of $f$ which ensures that the fibrations of Proposition~\ref{prop:MLF} essentially does not depend on $\e$.

\begin{defi} \label{defi_tp}
Let $0<\e<\e_0$, we say that $f$  has the \textit{transversality property in the ball $\B_\e^n$} if there exist $0<\delta\ll\e$ such that for every $y\in\B_\delta^k\setminus\Delta_\e$ 
the fibre $f^{-1}(y)$ is transverse to the sphere $\BS_\e^{n-1}$.
Since we are assuming $f$ locally surjective we can take $\B_\delta^k\subset\Ima(f_\e)$.
\end{defi}

\begin{remark}
In the case that $f$ is not locally surjective for the transversality property we need to ask that for every $y\in(\B_\delta^k\setminus\Delta_\e)\cap\Ima(f_\e)$ 
the fibre $f^{-1}(y)$ meets $\BS_{\e}^{n-1}$ transversely.
But if we consider that an empty fibre intersects transversely the sphere $\BS_{\e}^{n-1}$ we can state in general the transversality property as above.
\end{remark}

\begin{remark}
If we work with a nice analytic map-germ $f\colon(U,0) \to (\R^k,0)$ (see Remark~\ref{rem:nice.germ}), the discriminant does not depend on the radius $\e$ and we just denote it by $\Delta$. 
In this case we need the following stronger \textit{transversality property}.
We say that $f$ satisfies the \textit{strong transversality property} if for every $0<\e<\e_0$ 
there exists a real number $\delta=\delta(\e)$ such that $0<\delta\ll\e$ and for every $y\in\B_\delta\setminus\Delta$ the fibre $f^{-1}(y)$ meets $\BS_{\e}^{n-1}$ transversely.
It is clear that if $f$ satisfies the strong transversality property, then for every $0<\e<\e_0$, $f$ has the transversality property in the ball $\B_\e^n$.
\end{remark}

\begin{remark}\label{rmk:af.tp}
If $f$ is analytic and satisfies the Thom $a_f$-property then it has the strong transversality property (compare with \cite[Remark~5.7]{Cisneros-Seade-Snoussi:d-regular}).
\end{remark}

\begin{theo}\label{prop:tr.pr}
Consider $f\colon (\R^n,0)\to (\R^k,0)$, $n >k \ge 2 $ a nice map-germ with a critical point at $0$ and $\dim(f^{-1}(0))>0$. Then $f$ has the strong transversality property if and only if there exists $\e_0>0$ such that for every $0<\e<\e_0$ and every $0<\delta\ll\e$ there is no contribution to the extended discriminant $\hat{\Delta}_\e$ coming from the intersection $\BS_\e^n\cap f^{-1}(\B^k_\delta)$, that is, $\Sigma_\e=\hat{\Sigma}_\e$, and $\hat{\Delta}_\e\cap\B^k_\delta=\Delta_\e\cap\B^k_\delta=f(\Sigma_{f}(\mathring{\B}_\e))\cap\B^k_\delta$ is the discriminant of $f_\e$ restricted to $\B^k_\delta$. Therefore, fibrations \eqref{eq:MST} and \eqref{eq:MT} essentially do not depend on $\e$.
\end{theo}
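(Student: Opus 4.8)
The plan is to observe that, at a fixed pair of radii $0<\delta\ll\e<\e_0$, transversality of the fibres of $f$ over $\mathring{\B}_\delta^k\setminus\Delta$ with the sphere $\BS_\e^{n-1}$ is essentially \emph{tautologically} equivalent to the equality $\hat{\Delta}_\e\cap\B_\delta^k=\Delta_\e\cap\B_\delta^k$, so that the real content lies in the management of the two radii and in identifying the resulting fibrations. The device is the local remark that, for $x\in\BS_\e^{n-1}$ a regular point of $f$, surjectivity of $df_x$ makes $x$ a critical point of $f_\e|_{\BS_\e^{n-1}}$ if and only if $T_xf^{-1}(f(x))\subseteq T_x\BS_\e^{n-1}$, i.e.\ if and only if the fibre through $x$ fails to be transverse to $\BS_\e^{n-1}$ at $x$; moreover every point of $\hat{\Sigma}_\e\setminus\Sigma_\e$ is of exactly this type (a regular point of $f$ on the boundary sphere). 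One always has $\Sigma_\e\subseteq\hat{\Sigma}_\e$, hence $\Delta_\e\subseteq\hat{\Delta}_\e$; and, $f$ being a nice map-germ, $\Delta_\e$ coincides near $0$ with a fixed set-germ $\Delta$ for every $0<\e<\e_0$, where $\e_0$ is taken to be a Milnor radius.

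For the implication ``$\Leftarrow$'' I would fix $0<\e<\e_0$ and a $0<\delta\ll\e$ with $\hat{\Delta}_\e\cap\B_\delta^k=\Delta_\e\cap\B_\delta^k$ (shrinking $\delta$ if necessary so that $\B_\delta^k\subseteq\Ima(f_\e)$, cf.\ Remark~\ref{rem:surjective}), take $y\in\B_\delta^k\setminus\Delta$, and note that since $y\notin\Delta_\e$ no point of $f^{-1}(y)$ is critical for $f$; hence a point $x\in f^{-1}(y)\cap\BS_\e^{n-1}$ at which the fibre failed to be transverse to $\BS_\e^{n-1}$ would, by the local remark, lie in $\hat{\Sigma}_\e\setminus\Sigma_\e$, forcing $y=f(x)\in(\hat{\Delta}_\e\setminus\Delta_\e)\cap\B_\delta^k=\emptyset$ --- a contradiction. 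So every such fibre meets $\BS_\e^{n-1}$ transversely, which for all $\e<\e_0$ is the strong transversality property. For ``$\Rightarrow$'' I would start from the strong transversality property, with its radius $\e_0$ and its function $\e\mapsto\delta(\e)$, fix $0<\e<\e_0$ and $0<\delta\le\delta(\e)$, and prove $\hat{\Delta}_\e\cap\B_\delta^k\subseteq\Delta_\e\cap\B_\delta^k$ (the reverse inclusion being automatic): if $y\in\hat{\Delta}_\e\cap\B_\delta^k$ and $y=0$, then $y\in\Delta_\e$ because $0$ is a critical point; and if $y\ne0$ and, for a contradiction, $y\notin\Delta=\Delta_\e$, then writing $y=f(x)$ with $x\in\hat{\Sigma}_\e$ we get $x\notin\Sigma_\e$, so by the local remark $f^{-1}(y)$ is not transverse to $\BS_\e^{n-1}$ at $x$, contradicting strong transversality since $y\in\B_\delta^k\setminus\Delta$. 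This gives exactly the absence of a contribution to $\hat{\Delta}_\e$ from $\BS_\e^{n-1}\cap f^{-1}(\B_\delta^k)$; on the source it says that over $\B_\delta^k\setminus\Delta$ one has $\hat{\Sigma}_\e=\Sigma_\e$, which I read as the meaning of ``$\Sigma_\e=\hat{\Sigma}_\e$'' here --- the part of $\hat{\Sigma}_\e$ over $\Delta$ contributing nothing new to the discriminant in any case.

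Next I would settle the auxiliary identity $\Delta_\e\cap\B_\delta^k=f(\Sigma_f(\mathring{\B}_\e))\cap\B_\delta^k$: ``$\supseteq$'' is immediate, while for ``$\subseteq$'' a critical point of $f$ lying on $\BS_\e^{n-1}$ with image in $\mathring{\B}_\delta^k$ lies on a branch of the critical set of $f$ through $0$, hence by the curve selection lemma is a limit of interior critical points whose $f$-images converge to it inside $\mathring{\B}_\delta^k$, so it contributes nothing new. Consequently the set in question equals $\Delta\cap\B_\delta^k$, which is independent of $\e$, so the bases $\BS_\delta^{k-1}\setminus\hat{\Delta}_\e=\BS_\delta^{k-1}\setminus\Delta$ and $\mathring{\B}_\delta^k\setminus\hat{\Delta}_\e=\mathring{\B}_\delta^k\setminus\Delta$ of fibrations \eqref{eq:MT} and \eqref{eq:MST} do not depend on $\e$. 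To turn this into an equivalence of the fibrations for different radii I would, given $0<\e'<\e<\e_0$, shrink $\delta$ so that $\delta\le\delta(\e'')$ for every $\e''\in[\e',\e]$ --- which is possible because the subanalyticity of the construction lets one choose $\e\mapsto\delta(\e)$ bounded below on compact subintervals of $(0,\e_0)$ --- observe that strong transversality then makes the fibres over $\mathring{\B}_\delta^k\setminus\Delta$ transverse to every sphere $\BS_{\e''}^{n-1}$, $\e''\in[\e',\e]$, and run the classical argument: an integrable vector field on $(\B_\e^n\setminus\mathring{\B}_{\e'}^n)\cap f^{-1}(\mathring{\B}_\delta^k\setminus\Delta)$ transverse to those spheres and tangent to the fibres of $f$ carries the fibration of radius $\e$ onto the one of radius $\e'$. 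Hence \eqref{eq:MST} and \eqref{eq:MT} are, up to equivalence, independent of $\e$.

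The logical equivalence being a matter of definitions, the step I expect to demand genuine care --- the main obstacle --- is the bookkeeping of the two radii: choosing a Milnor radius $\e_0$ that uniformly handles the fibre $V=f^{-1}(0)$, and arranging the numbers $\delta(\e)$ coherently (bounded below on compacta of $(0,\e_0)$, via subanalyticity) so that the inflating/deflating step yields honest $\e$-independence, together with the curve-selection bookkeeping for the critical points of $f$ on the boundary spheres.
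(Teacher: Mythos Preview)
The paper states this theorem without proof, treating both the logical equivalence and the $\e$-independence of the fibrations as immediate consequences of the definitions (in particular of the ``nice map-germ'' hypothesis, which by Remark~1.2 means precisely that the discriminant is a well-defined set-germ at $0$). Your argument is correct and fills in considerably more detail than the paper provides; the core device --- that for a regular point $x\in\BS_\e^{n-1}$ of $f$, criticality for $f_\e|_{\BS_\e^{n-1}}$ is equivalent to non-transversality of the fibre $f^{-1}(f(x))$ with $\BS_\e^{n-1}$ --- is exactly the unspoken tautology underlying the paper's formulation, and your handling of the two implications and of the $\e$-independence via an integrable vector field is standard and sound.

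One small point: in your auxiliary identity $\Delta_\e\cap\B_\delta^k=f(\Sigma_f(\mathring{\B}_\e))\cap\B_\delta^k$, the curve-selection step (``a critical point of $f$ on $\BS_\e^{n-1}$ with image in $\mathring{\B}_\delta^k$ lies on a branch of the critical set through $0$'') is not obviously justified as stated --- an isolated critical point on the sphere need not be connected to $0$ inside $\Sigma_f$. It is cleaner, and closer to the paper's intent, to invoke the niceness hypothesis directly: by definition of a nice germ the set-germ $\Delta_\e$ at $0$ is independent of $\e$, so $\Delta_\e\cap\B_\delta^k=\Delta_{\e'}\cap\B_\delta^k$ for any $0<\e'<\e$, and since $\Sigma_f(\mathring{\B}_\e)\supseteq\Sigma_{\e'}$ the identity follows. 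This also sidesteps the need for subanalytic bookkeeping in that particular step.
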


So we have the following

\begin{theo}\label{thm:ML.f}
Let $f \colon (\R^n, 0) \to  (\R^k, 0)$, $n>k \ge 2$ be a map of class $C^\ell$, $\ell \ge 1$ with a critical point at $0$ and $\dim(f^{-1}(0)) > 0$. Then  the ball $\B_\e^n$ 
has the transversality property if and only if it admits local Milnor-L\^e fibrations in tubes over the complement of the discriminant $\Delta_\e$.
\end{theo}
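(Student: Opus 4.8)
\emph{Plan.} I would route both implications through the structure of the \emph{boundary} of the tube, using two elementary facts. The first is local linear algebra: if $x\in\BS_\e^{n-1}$ is a regular point of $f$, then $f_\e|_{\BS_\e^{n-1}}$ is singular at $x$ if and only if the fibre $f^{-1}(f(x))$ fails to be transverse to $\BS_\e^{n-1}$ at $x$, since for the surjection $df(x)$ one has $df(x)(T_x\BS_\e^{n-1})=\R^k$ exactly when $T_x\BS_\e^{n-1}+\ker df(x)=\R^n$, and $\ker df(x)=T_xf^{-1}(f(x))$; moreover every critical point of $f$ lying on $\BS_\e^{n-1}$ belongs to $\Sigma_\e$ and is sent into $\Delta_\e$, so a point of $\BS_\e^{n-1}$ lying over the complement of $\Delta_\e$ is automatically regular, and the equivalence applies there without exception. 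The second is that for any $0<\delta\ll\e$ the tube $E:=\B_\e^n\cap f_\e^{-1}(\mathring{\B}_\delta^k\setminus\Delta_\e)$ is a $C^\ell$ manifold with boundary $\partial E=\BS_\e^{n-1}\cap f_\e^{-1}(\mathring{\B}_\delta^k\setminus\Delta_\e)$: since $\Delta_\e$ is compact, $\mathring{\B}_\delta^k\setminus\Delta_\e$ is open and its preimage open in $\R^n$, and since $0\notin\BS_\e^{n-1}$ the ball $\B_\e^n$ is cut out near the sphere by a function whose differential does not vanish there. Granting these, the forward implication is immediate: if $f$ has the transversality property in $\B_\e^n$, then by the first fact $f_\e|_{\BS_\e^{n-1}}$ has no singular point over $\mathring{\B}_\delta^k\setminus\Delta_\e$, hence $f(\Sigma_\e(\BS_\e^{n-1}))\cap\mathring{\B}_\delta^k\subseteq\Delta_\e$ and $\mathring{\B}_\delta^k\setminus\hat{\Delta}_\e=\mathring{\B}_\delta^k\setminus\Delta_\e$ with equal preimages, so the two fibrations of Proposition~\ref{prop:MLF} are literally the asserted Milnor-L\^e fibrations \eqref{eq:MST} and \eqref{eq:MT} over the complement of $\Delta_\e$ (shrink $\delta$ slightly for the second, so that $\BS_\delta^{k-1}\subset\mathring{\B}_\delta^k$). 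One could equally well bypass Proposition~\ref{prop:MLF}: transversality makes $f_\e|\colon E\to\mathring{\B}_\delta^k\setminus\Delta_\e$ a proper $C^\ell$ submersion of manifolds with boundary, and one concludes by the Ehresmann fibration theorem for manifolds with boundary (\cite[p.~23]{Lamotke:TCPVASL}).

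For the converse I would argue as follows. Assume the tube carries the Milnor-L\^e fibration $f_\e|\colon E\to\mathring{\B}_\delta^k\setminus\Delta_\e$, understood, as always, as a locally trivial $C^\ell$ fibration of manifolds with boundary. Any local trivialization $f_\e^{-1}(W)\cong W\times F$ is a $C^\ell$ diffeomorphism of manifolds with boundary commuting with the projection to $W$, hence carries $\partial E\cap f_\e^{-1}(W)$ onto $W\times\partial F$; therefore $f_\e|_{\partial E}\colon\partial E\to\mathring{\B}_\delta^k\setminus\Delta_\e$ is again locally trivial, in particular a submersion. By the second fact $\partial E=\BS_\e^{n-1}\cap f_\e^{-1}(\mathring{\B}_\delta^k\setminus\Delta_\e)$, so this says $f_\e|_{\BS_\e^{n-1}}$ is nonsingular at every point lying over $\mathring{\B}_\delta^k\setminus\Delta_\e$; and since each such point is a regular point of $f$, the first fact turns this into the statement that $f^{-1}(y)$ is transverse to $\BS_\e^{n-1}$ for every $y\in\mathring{\B}_\delta^k\setminus\Delta_\e$, i.e.\ the transversality property in $\B_\e^n$.

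The two directions are thus nearly formal once the ingredients are set up; the step I would be most careful about is the second ingredient as used in the converse --- namely that the tube over the complement of the \emph{discriminant} $\Delta_\e$ (and not merely over the complement of the extended discriminant $\hat{\Delta}_\e$, where Proposition~\ref{prop:MLF} already furnishes a fibration) is honestly a $C^\ell$ manifold with boundary equal to its intersection with $\BS_\e^{n-1}$. This is exactly what legitimizes the ``restrict the fibration to the boundary'' manoeuvre and makes it sharp enough to return the transversality condition; the remaining work is only the linear-algebra description of the singular set of $f_\e|_{\BS_\e^{n-1}}$ and routine shrinking of the radii. If one wished to weaken the hypothesis of the converse to a merely topological local triviality, the boundary of the tube would instead have to be detected through local homology, which adds bookkeeping but not ideas.
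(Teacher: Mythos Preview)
Your argument is correct and coincides with the paper's implicit reasoning: the paper gives no separate proof of Theorem~\ref{thm:ML.f}, treating it as an immediate consequence of Proposition~\ref{prop:MLF} together with the equivalence, recorded in Theorem~\ref{prop:tr.pr}, between the transversality property and the vanishing of the sphere contribution to $\hat{\Delta}_\e$ (so that $\hat{\Delta}_\e\cap\B_\delta^k=\Delta_\e\cap\B_\delta^k$). Your forward direction is exactly this, and your converse --- extracting the submersivity of $f_\e|_{\partial E}$ from the boundary-respecting local trivializations and translating it back via the linear-algebra identity $\Sigma_\e(\BS_\e^{n-1})\cap f^{-1}(\Delta_\e^c)=\{x:\,f^{-1}(f(x))\not\pitchfork\BS_\e^{n-1}\}$ --- is precisely the content the paper leaves unsaid when it passes from Theorem~\ref{prop:tr.pr} to Theorem~\ref{thm:ML.f} with ``So we have the following''.
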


\medskip
\section{The \texorpdfstring{$d$}{dh}-regularity for real analytic maps with linear discriminant}  \label{section_2}

\medskip
In this section, we extend the concept of $d$-regularity to real diffe\-rentiable maps with linear discriminant. 
The main idea is that $d$-regularity can be defined as transversality of preimages of lines, different from those lying in the discriminant, with sufficiently small spheres. Then we show that in this context, $d$-regularity guarantees a fibration on the sphere.

First, let us make the definition of linear discriminant  precise:

\begin{defi}
Let $f\colon (\R^n,0) \to (\R^k,0)$ be a map of class $C^\ell$, $\ell \ge 1$ with a critical point at $0$. Let $\e_0$ be a Milnor radius for $f$.
Let $0<\e<\e_0$, we say that $f$ has \textit{linear discriminant in the ball $\mathbb{B}_\e^n$} if $\Delta_\e$ is a union of line-segments with one endpoint at $0 \in \mathbb{R}^k$. 
We say that $\eta>0$ is a \textit{linearity radius} for $\Delta_\e$ if each of these line-segments intersect $\BS_\eta^{k-1}$, that is, if
\begin{align*}
\Delta_\e \cap \B_\eta^k = \Cone \big( \Delta_\e \cap \BS_\eta^{k-1} \big) \, .
\end{align*}
\end{defi}

\begin{remark}
The case when $f$ has $0\in\R^k$ as isolated critical value is considered to have linear discriminant with $\Delta_\e \cap \BS_\eta^{k-1}=\emptyset$.
\end{remark}

\begin{remark}\label{rem:delta=eta}
Let $f\colon (\B_{\e_0}^n,0) \to (\R^k,0)$ with $n\geq k\geq 2$ be a locally surjective analytic map.
Let $0<\e<\e_0$ and suppose $f$ has the transversality property in the ball $\mathbb{B}_\e^n$ and that $f_\e$ has linear discriminant in $\mathbb{B}_\e^n$.
Notice that in this case we can take the linearity radius to be the $\delta$ in the definition of the transversality property.
\end{remark}

Let $f\colon (\R^n,0) \to (\R^k,0)$ be a map of class $C^\ell$ with $\ell\geq 1$ with linear discriminant in the ball $\mathbb{B}_\e^n$ and consider a linearity radius $\eta>0$ for $f$. Set:
\[
 \mathcal{A}_\eta := \Delta_\e \cap \BS_\eta^{k-1} \, .
\]
Let $\pi\colon \BS_\eta^{k-1}\to\BS^{k-1}$ be the projection onto the unit sphere $\BS^{k-1}$ and set $\mathcal{A}=\pi(\mathcal{A}_\eta)$.
For each point $\theta \in \BS_\eta^{k-1}$, let $\mathcal{L}_\theta \subset \R^k$ be the open ray in $\R^k$ from the origin that contains the point $\theta$.
Set:
\[
 E_{\theta} := f^{-1}(\LL_\theta)  \, .
\]

Notice that $E_{\theta}$ is a manifold of class $C^\ell$ for any $\theta$ in $\BS_\eta^{k-1} \setminus \mathcal{A}_\eta$. 
 
We have:

\begin{defi}
Let $f\colon (\R^n,0) \to (\R^k,0)$ be a map of class $C^\ell$ with $\ell\geq 1$ and linear discriminant. 
We say that $f$ is \textit{$d$-regular (in the ball $\B_\e^n$)} if $E_{\theta}$ intersects the sphere $\BS_{\e'}^{n-1}$ transversely in $\R^n$, for every $\e'$ with $0<\e' \leq \e$ and for every 
$\theta \in \BS_\eta^{k-1} \setminus \mathcal{A}_\eta $.
\end{defi}

\begin{example}
Consider the real analytic map $f\colon (\R^4,0) \to (\R^3,0)$ given by:
\[
 f(x,y,z,w) := (x^2-y^2z, y, w) \, .
\]
Its critical set is the plane $\{x=y=0\}$ in $\R^4$, and its discriminant is the axis $\{u_1 = u_2 =0\}$ in $\R^3$, which is linear. One can check that $f$ is not $d$-regular. 
\end{example}

\medskip
\subsection*{A class of examples}

\begin{example} \label{ex_s}
In \cite{LopezdeMedrano:SHQM} S.~L\'opez de Medrano studied the topology of real analytic maps $(f,g): \R^n \to \R^2$ such that both $f$ and $g$ are homogeneous quadratic polynomials. For $(f,g)$ 
satisfying some generic hypothesis, he completely described the topology of $V(f,g) = f^{-1}(0) \cap g^{-1}(0)$ in terms of the coeficients of $f$ and $g$. 

Let us restrict to the diagonal case, that is, when $(f,g)$ has the form:
\[
 (f,g) = \left( \sum_{i=1}^n a_i x_i^2 \, , \, \sum_{i=1}^n b_i x_i^2 \right) \, ,
\]
where $a_i$ and $b_i$ are real constants in generic position. This means that the origin is in the convex hull of the points $(a_i,b_i)$ (which guarantees that the link of $V(f)$ is non-empty) and 
that no two of the points $(a_i,b_i)$ are linearly dependent, that is, $a_i b_j \neq a_j b_i$, for any $i \neq j$ ({\it Weak Hyperbolicity Hypothesis}).

A simple calculation shows that the set $\Sigma$ of critical points of $(f,g)$ coincides with the coordinate axis of $\R^n$ and the discriminant $\Delta(f,g)$ is the union of the $n$ line-segments in 
$\R^2$ joining the origin and the points $\lambda_i := (a_i,b_i)$. Hence $(f,g)$ has linear discriminant.

We are going to show that $(f,g)$ is $d$-regular. First, notice that any ray $\mathcal{L}_\theta$ is given by one of the following forms:

 $\{u + \a v =0\} \cap \{v > 0\} , 
 \{u + \a v =0\} \cap \{v < 0\} , 
 \{v=0\} \cap \{u > 0\} \, {\rm or} \\
 \{v=0\} \cap \{u < 0\}$
for some $\a \in \R$. Hence any $E_{\theta}$ has one of the following forms:

$\{f + \a g =0\} \cap \{g > 0\}, \,
 \{f + \a g =0\} \cap \{g < 0\}, \, 
 \{g=0\} \cap \{f > 0\} \, {\rm or }\\
 \{g=0\} \cap \{f < 0\}$
for some $\a \in \R$. 

In any case, all the equations and inequations are homogeneous, so it is easy to verify that $E_{\theta}$ intersects any sphere centered at the origin transversally.
\end{example}

\medskip
The following example generalizes L\'opez de Medrano's maps:

\begin{example} \label{ex_sg}
Let $\K$ be either $\R$ or $\C$. 
Let $(f,g): \K^n \to \K^2$ be a $\K$-analytic map of the form:
\[
 (f,g) = \left( \sum_{i=1}^n a_i x_i^p \, , \, \sum_{i=1}^n b_i x_i^p \right) \, ,
\]
where $a_i, b_i \in \K$ are constants in generic position (as in Example \ref{ex_s}) and $p \geq 2$ is an integer.

As before, one has that the critical set $\Sigma$ of $(f,g)$ is given by the coordinate axis of $\K^n$ and the discriminant $\Delta$ is linear. Moreover, the same argument of the example above shows 
that $(f,g)$ is $d$-regular.

Let us show that the link $Z$ of $V=(f,g)^{-1}(0)$ is non-empty, as before. In order to deal simultaneously with both the real and the complex case, set $c=1$ if $\K=\R$ and $c=2$ 
if $\K=\C$. 

If $p$ is odd, consider the homeomorphism:
\[
 \begin{array}{cccc}
\phi_1 \ : & \! \K^n & \! \longrightarrow & \! \K^n \\
& \! (x_1, \dots, x_n) & \! \longmapsto & \! (x_1^p, \dots, x_n^p) \\
\end{array}
\ .
\]
Then $(f,g)$ equals the composition $(f^1,g^1) \circ \phi_1$ , where $(f^1,g^1)$ is the submersion given by:
\[
 (f^1,g^1)(x) := \left( \sum_{i=1}^n a_i x_i \, , \, \sum_{i=1}^n b_i x_i \right) \, ,
\]
Hence $V$ is homeomorphic to a $(n-2)$-$\K$-dimensional plane in $\K^n$ and $Z$ is homeomorphic to the sphere $\BS^{c(n-2)-1}$.

If $p=2r$ for some odd integer $r\geq 3$, consider the homeomorphism:
\[
 \begin{array}{cccc}
\phi_2 \ : & \! \K^n & \! \longrightarrow & \! \K^n \\
& \! (x_1, \dots, x_n) & \! \longmapsto & \! (x_1^r, \dots, x_n^r) \\
\end{array}
\, .
\]
So $(f,g) = (f^2,g^2) \circ \phi_2$ , where $(f^2,g^2) := \left( \sum_{i=1}^n a_i x_i^2 \, , \, \sum_{i=1}^n b_i x_i^2 \right) $
is L\'opez de Medrano's map as in Example \ref{ex_s}. Therefore $Z$ is homeomorphic to the link of $(f^2,g^2)$, which is non-empty.

Finally, if $p=2r$ for some even integer $r\geq 2$, consider the map:
\[
 \begin{array}{cccc}
\phi_3 \ : & \! \K^n & \! \longrightarrow & \! \K^n \\
& \! (x_1, \dots, x_n) & \! \longmapsto & \! (x_1^r, \dots, x_n^r) \\
\end{array}
,
\]
which is not a homeomorphism. Once again we have that $(f,g) = (f^2,g^2) \circ \phi_3$. Hence $V = (\phi_3)^{-1}(\hat{V})$, where $\hat{V} := (f^2,g^2)^{-1}(0)$ has dimension greater than zero. So 
the dimension of $V$ is bigger than zero.
\end{example}

\medskip
Now we give a characterization of $d$-regularity for maps with linear discriminant, based on some definitions and preliminary results from 
\cite[\S3]{Cisneros-Seade-Snoussi:d-regular}.

Let $0<\e<\e_0$ be such that $f$ has linear discriminant and it is $d$-regular in the ball $\B_\e^n$.
Set $W:=f^{-1}(\Delta_\e)$ and consider the maps $\Phi\colon \B_\e^n\setminus W\to \BS^{k-1} \setminus \mathcal{A}$ and $\mathfrak{F}\colon \B_\e^n\setminus W\to\R^k\setminus\Delta_\e$ given 
respectively by
\[
 \Phi(x)=\frac{f(x)}{\Vert f(x)\Vert},\quad\text{and}\quad \mathfrak{F}=\Vert x\Vert\Phi(x).
\]

Notice that given $y\in\LL_\theta$ with $\theta\in \BS^{k-1}_\eta \setminus \mathcal{A}_\eta$ the fibre $\mathfrak{F}^{-1}(y)$ is the
intersection of $E_\theta$ with the sphere of radius $\Vert y\Vert$ centred at $0$. 
Each $E_\theta$ is a union of fibres of $\mathfrak{F}$, just as it is a union of fibres of $f$, so we have
\begin{equation}\label{eq:Et.f.F}
E_\theta=f^{-1}(\LL_\theta)=\mathfrak{F}^{-1}(\LL_\theta).
\end{equation}
The map $\mathfrak{F}$ is called the {\it  spherefication} of $f$. 

The following proposition is a straightforward generalization of \cite[Proposition~3.2]{Cisneros-Seade-Snoussi:d-regular}.

\begin{prop}\label{prop:Dreg.ch}
Let  $f\colon(\R^n,0) \to (\R^k,0)$ be a map of class $C^\ell$ with $\ell\geq 1$, with linear discriminant. Let $\e_0$ be a Milnor radius for $f$ and let $0<\e<\e_0$. 
The following conditions are equivalent
\begin{enumerate}[a)]
\item The map $f$ is $d$-regular in the ball $\B_\e^n$.\label{it:d-r} 
\item For each sphere $\BS_{\e'}^{n-1}$ in $\R^n$ centred at $0$ of radius $\e' < \e$, the restriction map $\mathfrak{F}_{\e'}\colon \BS_{\e'}^{n-1} \setminus W\to \BS^{k-1}_{\e'}\setminus\Delta_\e$ 
of $\mathfrak{F}$ is a submersion.\label{it:res-sub} 
\item The spherefication map $\mathfrak{F}$ is a submersion.\label{it:sp-sub} 
\item The map $\phi =\frac{f}{\Vert f\Vert}\colon \BS_{\e'}^{n-1} \setminus W\longrightarrow\BS^{k-1}\setminus \mathcal{A}$ is a submersion for every sphere $\BS_{\e'}^{n-1}$ with $\e'<\e$.\label{it:phi-sub}
\end{enumerate}
\end{prop}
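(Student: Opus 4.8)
The plan is to translate all four conditions into a single pointwise statement about the rank of $d\mathfrak{F}_x$, exploiting that the spherefication preserves the distance to the origin. Since $\mathfrak{F}(x)=\|x\|\Phi(x)$ with $\|\Phi(x)\|=1$, we have $\|\mathfrak{F}(x)\|=\|x\|$ on $\B_\e^n\setminus W$; differentiating $\|\mathfrak{F}(x)\|^2=\|x\|^2$ gives the identity
\[
 \langle d\mathfrak{F}_x(v),\mathfrak{F}(x)\rangle=\langle x,v\rangle\qquad\text{for all }v\in T_x\R^n .
\]
From this I read off three facts to be used repeatedly: $\mathfrak{F}$ maps $\BS_{\e'}^{n-1}$ into $\BS_{\e'}^{k-1}$, so $\mathfrak{F}_{\e'}$ is well defined; $(d\mathfrak{F}_x)^{-1}\big(T_{\mathfrak{F}(x)}\BS_{\|x\|}^{k-1}\big)=T_x\BS_{\|x\|}^{n-1}$; and $d\mathfrak{F}_x(x)$ has non-zero radial component, because $\langle d\mathfrak{F}_x(x),\mathfrak{F}(x)\rangle=\|x\|^2\neq 0$.

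The second step is to identify $\ker d\mathfrak{F}_x$. Writing $\mathfrak{F}=\lambda f$ with $\lambda(x)=\|x\|/\|f(x)\|>0$, a $C^\ell$ function on $\B_\e^n\setminus W$ since $W\supseteq f^{-1}(0)$, we get $d\mathfrak{F}_x(v)=d\lambda_x(v)\,f(x)+\lambda(x)\,df_x(v)$. Hence if $d\mathfrak{F}_x(v)=0$ then $df_x(v)\in\R f(x)=T_{f(x)}\LL_\theta$, where $\theta$ is the direction of $f(x)$; and because $\B_\e^n\setminus W$ contains no critical point of $f$ (as $\Sigma_\e\subseteq W$) while $\LL_\theta$ avoids $\Delta_\e$ by linearity of the discriminant, $E_\theta=f^{-1}(\LL_\theta)$ is a $C^\ell$ submanifold of dimension $n-k+1$ with $T_xE_\theta=(df_x)^{-1}(\R f(x))$. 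Thus $v\in T_xE_\theta$, and also $v\in T_x\BS_{\|x\|}^{n-1}$ by the identity above. Conversely, if $v\in T_xE_\theta\cap T_x\BS_{\|x\|}^{n-1}$, then $d\mathfrak{F}_x(v)\in\R f(x)$ (from the formula) is orthogonal to $f(x)$ (from the identity), hence zero. This yields
\[
 \ker d\mathfrak{F}_x=T_x\BS_{\|x\|}^{n-1}\cap T_xE_\theta .
\]

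The equivalences then drop out. Since $\dim T_x\BS_{\|x\|}^{n-1}+\dim T_xE_\theta-n=n-k$, we have $\dim\ker d\mathfrak{F}_x=n-k$, i.e.\ $d\mathfrak{F}_x$ is surjective, exactly when $T_x\BS_{\|x\|}^{n-1}+T_xE_\theta=\R^n$, i.e.\ when $E_\theta$ is transverse to $\BS_{\|x\|}^{n-1}$ at $x$; ranging over $x\in\B_\e^n\setminus W$ this gives the equivalence of \ref{it:d-r} and \ref{it:sp-sub}. For \ref{it:sp-sub}$\Leftrightarrow$\ref{it:res-sub}: $d(\mathfrak{F}_{\e'})_x=d\mathfrak{F}_x|_{T_x\BS_{\e'}^{n-1}}$ has image inside $T_{\mathfrak{F}(x)}\BS_{\e'}^{k-1}$, and since $T_x\R^n=T_x\BS_{\e'}^{n-1}\oplus\R x$ with $d\mathfrak{F}_x(x)\notin T_{\mathfrak{F}(x)}\BS_{\e'}^{k-1}$, the map $d\mathfrak{F}_x$ is onto $\R^k$ iff $d(\mathfrak{F}_{\e'})_x$ is onto $T_{\mathfrak{F}(x)}\BS_{\e'}^{k-1}$. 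Finally \ref{it:res-sub}$\Leftrightarrow$\ref{it:phi-sub} is immediate: on $\BS_{\e'}^{n-1}$ one has $\mathfrak{F}_{\e'}=\e'\,\phi$, the factor $\|x\|\equiv\e'$ being constant, so $\mathfrak{F}_{\e'}$ and $\phi$ differ by the diffeomorphism $u\mapsto\e' u$ of $\BS^{k-1}$ onto $\BS_{\e'}^{k-1}$ and are submersions simultaneously.

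The calculations above are all short; the points to be careful about are the bookkeeping ones, and I expect the main (modest) obstacle to be precisely there. One must verify that $E_\theta$ really is a $C^\ell$ submanifold with the stated tangent space throughout $\B_\e^n\setminus W$ --- this is exactly where $\Sigma_\e\subseteq W$ and the linearity of $\Delta_\e$ (giving $\LL_\theta\cap\Delta_\e=\emptyset$ for $\theta\notin\mathcal{A}_\eta$) are used --- and one must match the quantifiers, noting that every $x\in\B_\e^n\setminus W$ lies on a unique sphere $\BS_{\|x\|}^{n-1}$ and in a unique $E_\theta$, with transversality being vacuous over rays not meeting $\Ima f$. The mild discrepancy between ``$\e'\le\e$'' in \ref{it:d-r} and ``$\e'<\e$'' in \ref{it:res-sub}, \ref{it:phi-sub} is harmless, since the submersion condition is open and $\e<\e_0$; beyond this the argument is a line-by-line adaptation of \cite[Proposition~3.2]{Cisneros-Seade-Snoussi:d-regular}.
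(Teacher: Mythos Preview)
Your argument is correct and complete. The paper itself does not give a proof of this proposition, simply remarking that it is a straightforward generalization of \cite[Proposition~3.2]{Cisneros-Seade-Snoussi:d-regular}; you have supplied precisely that adaptation, carrying out the kernel computation $\ker d\mathfrak{F}_x=T_x\BS_{\|x\|}^{n-1}\cap T_xE_\theta$ and the dimension count explicitly, and correctly noting where the linear-discriminant hypothesis enters (to ensure $E_\theta$ is a $C^\ell$ submanifold for $\theta\notin\mathcal{A}_\eta$).
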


We can now state the fibration theorem:

\begin{theo}\label{prop_2}
Let $f\colon (\R^n,0) \to (\R^k,0)$ with $n\geq k\geq 2$ be a map of class $C^\ell$ with $\ell\geq 1$. Let $\e_0$ be a Milnor radius for $f$ and let $0<\e<\e_0$.
Suppose $f$ has linear discriminant and the transversality property in the ball $\mathbb{B}_\e^n$. If $f$ is $d$-regular in the ball $\mathbb{B}_\e^n$, then 
the restriction of $\Phi$ given by
\begin{equation}\label{eq:FS}
 \phi=\Phi|\colon  \BS_\e^{n-1} \setminus W \to \BS^{k-1} \setminus \mathcal{A}
\end{equation}
is a (differentiable) locally trivial fibration over its image, where $W:= f^{-1}(\Delta_\e)$. If $f$ is analytic then the fibration $\phi$ is smooth.
\end{theo}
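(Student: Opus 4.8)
The plan is to deduce the fibration on the sphere from the already–established fibration on the tube (Proposition~\ref{prop:MLF}, in its Milnor–Lê form \eqref{eq:MST}–\eqref{eq:MT}) by pushing the latter outward along the flow of a suitable vector field, exactly in the spirit of Milnor's original argument, but carried out for the spherefication map $\mathfrak{F}$ rather than for $f$ itself. Here $d$-regularity is precisely what is needed to make this work: by Proposition~\ref{prop:Dreg.ch} it is equivalent to saying that $\mathfrak{F}$ is a submersion on $\B_\e^n\setminus W$, and it is also equivalent to saying that each $E_\theta\setminus W$ is transverse to all spheres centred at $0$. The transversality property, in turn, guarantees that the fibres of $f$ over $\mathring{\B}_\delta^k\setminus\Delta_\e$ meet the outer sphere $\BS_\e^{n-1}$ transversely, so that the tube fibration reaches the boundary of the ball.

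First I would fix a linearity radius and take $\delta=\eta$ as in Remark~\ref{rem:delta=eta}, and observe that by \eqref{eq:Et.f.F} the fibres of $\mathfrak{F}$ over a ray $\LL_\theta$ are exactly the intersections $E_\theta\cap\BS_{\e'}^{n-1}$. The key geometric step is to construct, on $\B_\e^n\setminus W$, a $C^{\ell-1}$ (smooth, if $f$ is analytic) vector field $\xi$ with three properties: (i) $\langle\xi,\mathbf{x}\rangle>0$, so that its flow moves points away from the origin and crosses the spheres $\BS_{\e'}^{n-1}$ transversally; (ii) $\xi$ is tangent to every $E_\theta$, i.e. $D\mathfrak{F}(\xi)$ is radial in $\R^k$ (this is possible precisely because $\mathfrak{F}$ is a submersion by Proposition~\ref{prop:Dreg.ch}, so one can lift the radial field on $\R^k\setminus\Delta_\e$ locally and patch with a partition of unity while keeping the radial inner product positive, using $d$-regularity to ensure the radial and spherical directions are independent along each $E_\theta$); and (iii) $\xi$ is bounded so its flow is defined for all relevant time and does not escape to $W$ (one controls this using that $W=f^{-1}(\Delta_\e)$ is closed and the flow stays on a fixed $E_\theta$, which is bounded away from $W$ once $\theta\notin\mathcal{A}_\eta$). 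The flow of $\xi$ then carries the Milnor–Lê fibre $\B_\e^n\cap f_\e^{-1}(\theta)$ (for $\theta\in\BS_\delta^{k-1}\setminus\Delta_\e$) diffeomorphically onto $\BS_\e^{n-1}\cap E_\theta=\phi^{-1}(\pi(\theta))$, compatibly with the projections, thereby identifying \eqref{eq:FS} with the restriction of \eqref{eq:MT} to a sphere. Since a restriction of a locally trivial fibration to a transverse submanifold of the total space mapping onto the base is again locally trivial — or, more cleanly, since the flow provides an explicit bundle isomorphism over $\BS^{k-1}\setminus\mathcal{A}$ — we conclude that $\phi$ is a $C^\ell$ locally trivial fibration over its image. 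In the analytic case everything can be taken smooth, as the ingredients (the radial field, the submersion $\mathfrak{F}$, the partition of unity smoothed out) are smooth.

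The main obstacle is the construction and control of the vector field $\xi$: one must simultaneously arrange tangency to all the $E_\theta$ (a condition on $D\mathfrak{F}$), transversality to all small spheres (positivity of $\langle\xi,\mathbf{x}\rangle$), and completeness of the flow up to the boundary sphere without leaving $\B_\e^n\setminus W$. The first two are compatible exactly because of $d$-regularity in the strong form of Proposition~\ref{prop:Dreg.ch}\,\eqref{it:res-sub}: on each sphere $\BS_{\e'}^{n-1}$ the map $\mathfrak{F}_{\e'}$ is a submersion, so the radial direction (transverse to the sphere) maps onto the radial direction in $\R^k$ and is independent of $\ker D\mathfrak{F}_{\e'}$, which lets one choose $\xi$ projecting to the outward radial field on $\R^k\setminus\Delta_\e$ while having strictly positive radial component; a partition-of-unity argument globalizes this over $\B_\e^n\setminus W$. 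Completeness and non-escape to $W$ follow because integral curves stay inside a single $E_\theta$, which is a closed $C^\ell$ submanifold of $\B_\e^n\setminus W$ whose closure meets $W$ only over $0$; reparametrizing $\xi$ so that $\langle\xi,\mathbf{x}\rangle\equiv\|\mathbf{x}\|$ makes $\|\mathbf{x}\|$ strictly increasing along the flow, so each trajectory reaches $\BS_\e^{n-1}$ in finite time. Finally I would appeal to the Relative Ehresmann Fibration Theorem (as in \cite[p.~23]{Lamotke:TCPVASL}) to conclude, and invoke Lemma~\ref{lem:FoF=F} if it is convenient to compose the resulting trivialization with the radial projection $\pi\colon\BS_\delta^{k-1}\setminus\Delta_\e\to\BS^{k-1}\setminus\mathcal{A}$.
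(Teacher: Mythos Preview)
Your inflation argument has a real gap: the flow of your vector field $\xi$ does \emph{not} carry the Milnor--L\^e fibre $\B_\e^n\cap f^{-1}(\theta)$ onto all of $\phi^{-1}(\pi(\theta))=\BS_\e^{n-1}\cap E_\theta$. First, your conditions (i) and (ii) on $\xi$ (radial, tangent to each $E_\theta$) omit the crucial property that $\xi$ be transverse to the tubes $f^{-1}(\BS_{\delta'}^{k-1})$ --- this is condition~(\ref{it:pr3}) in Theorem~\ref{lem:uni.con.str.0} --- without which $\|f\|$ need not be monotone along flow lines, so a single trajectory can meet $f^{-1}(\theta)$ several times and the forward-flow map fails to be injective on $f^{-1}(\theta)$. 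Second, and more seriously, even granting that extra property the image of the forward flow is only the outer piece $\bigl(\BS_\e^{n-1}\cap E_\theta\bigr)\setminus f^{-1}(\mathring{\B}_\delta^k)$: for a point $y\in\BS_\e^{n-1}\cap E_\theta$ in the collar (with $\|f(y)\|<\delta$) the backward flow has $\|x\|\to 0$ and hence $\|f\|\to 0$ without ever reaching $\delta$, so $y$ is never hit from $f^{-1}(\BS_\delta^{k-1})$. Thus inflation identifies \eqref{eq:MT} only with $\phi$ restricted to a proper open subset of $\BS_\e^{n-1}\setminus W$, and the collar is left unhandled.

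The paper's proof is genuinely different and avoids the flow altogether. The underlying obstacle to a one-shot Ehresmann argument is that $\phi$ is \emph{not} proper on $\BS_\e^{n-1}\setminus W$: a sequence $x_n$ with $f(x_n)\to 0$ along a fixed direction $\theta\notin\mathcal{A}$ can converge into the link $V\cap\BS_\e^{n-1}\subset W$. The paper therefore decomposes $\mathcal{M}=\BS_\e^{n-1}\setminus W$ into the two manifolds-with-boundary $\mathcal{M}\cap f^{-1}(\B_\delta^k)$ and $\mathcal{M}\setminus f^{-1}(\mathring{\B}_\delta^k)$, and applies Ehresmann on each. On the first piece $\phi=\tilde{\pi}\circ f_1$, where $f_1$ is a proper submersion by the transversality property, so Ehresmann followed by Lemma~\ref{lem:FoF=F} gives the fibration; on the second piece $\|f\|\ge\delta$ keeps fibres away from $V$, so $\phi$ is proper there, and $d$-regularity (Proposition~\ref{prop:Dreg.ch}) makes it a submersion up to the boundary. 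The two fibrations agree on the common boundary and glue. Your vector-field approach is essentially what the paper does later in Theorem~\ref{theo_a} (equivalence of the sphere and tube fibrations), but note that the proof there explicitly \emph{uses} Theorem~\ref{prop_2}, already established via the decomposition, to extend from the outer piece to the full sphere.
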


\begin{proof}
Suppose that $f$ is surjective. The general case is analogous. Remember from Remark~\ref{rem:delta=eta} that we can take $\delta>0$ for the transversality property
as the linearity radius of $f$.
Set
\[
 \mathcal{M} := \BS_\e^{n-1} \setminus  W  \, .
\]
Notice that $\mathcal{M}$ is an open submanifold of $\BS_\e^{n-1}$ since $\Delta_\e$ is closed in $\R^k$. Consider the following decomposition
\[
 \mathcal{M} = \left( \mathcal{M} \cap f^{-1}(\B_\delta^k) \right) \cup \left( \mathcal{M} \setminus f^{-1}(\mathring{\B}_\delta^k) \right)  \, ,
\]
where $\mathring{\B}_{\delta}^k$ is the interior of the closed ball $\B_{\delta}^k$. Both pieces are submanifolds-with-boundary of $\mathcal{M}$ of dimension $n-1$ and their intersection is the 
common boundary submanifold of dimension $n-2$
\[
\BS_\e^{n-1} \cap f^{-1}(\BS_\delta^{k-1}\setminus\Delta_\e)= \left( \mathcal{M} \cap f^{-1}(\B_\delta^k) \right) \cap \left( \mathcal{M} \setminus f^{-1}(\mathring{\B}_\delta^k) \right)\, .
\]
We are going to show that the restriction of $\phi$ to each of these components is a (differentiable) fibre bundle, so they can be glued into a global fibre bundle. 

The restriction of $f$ given by $f_1\colon\mathcal{M} \cap f^{-1}(\B_\delta^k)\to \B_{\delta}^k\setminus \Delta_\e$ is proper since $\BS_\e^{n-1}\cap f^{-1}(\B_\delta^k)$ is compact, 
and since $f$ has the transversality property in the ball $\mathbb{B}_\e^n$ it is a submersion, 
and by Ehresmann fibration theorem it is a (differentiable) fibre bundle. Now consider the radial projection $\tilde{\pi}\colon 
\B_{\delta}^k\setminus \Delta_{f,\eta} \to \BS^{k-1}\setminus\mathcal{A}$ which is a (trivial and smooth) fibre bundle. The restriction
\[
 \phi_1\colon \left( \mathcal{M} \cap f^{-1}(\B_\delta^k) \right)\to \BS^{k-1} \setminus \mathcal{A}
\]
of $\phi$ is given by the composition $\tilde{\pi}\circ f_1$. By Lemma~\ref{lem:FoF=F} the composition $\phi_1=\tilde{\pi}\circ f_1$ is a $C^\ell$-locally trivial fibration.

So now we just have to show that the restriction:
\[
 \phi_2: \mathcal{M} \setminus f^{-1}(\mathring{\B}_\delta^k) \to \BS^{k-1} \setminus \mathcal{A}
\]
is a $C^\ell$-fibration. We have that $\phi_2$ is proper since $\BS_\e^{n-1} \setminus f^{-1}(\mathring{\B}_\delta^k)$ is compact.

Since $f$ is $d$-regular, by Proposition~\ref{prop:Dreg.ch} the map $\phi\colon\BS_\e^{n-1} \setminus W\to\BS^{k-1}\setminus \mathcal{A}$ has no critical points. So $\phi_2$ is a submersion 
restricted to the interior $\mathcal{M} \setminus f^{-1}(\B_\delta^k)$ of $\mathcal{M} \setminus f^{-1}(\mathring{\B}_\delta^k)$. Since $\phi_1$ and $\phi_2$ coincide on the boundary $\mathcal{M} 
\cap f^{-1}(\BS_\delta^{k-1})$ and we already saw that $\phi_1$ restricted to this boundary is a submersion. The result follows from the Ehresmann fibration theorem for manifolds with boundary.
\end{proof}

\begin{remark}\label{rem:ICV}
 In the case of $f$ with isolated critical value and transversality property, we have $\Delta_\e=\{0\}$ and $\mathcal{A}=\emptyset$ and Theorem~\ref{prop_2} in this case gives another proof of the existence of the fibration 
on the sphere $\phi=\frac{f}{\Vert f\Vert}\colon\BS_\e^{n-1}\setminus f^{-1}(0)\to\BS^{k-1}$ without ``inflating'' the fibration \eqref{eq:MT} on the tube (see 
\cite[Theorem~5.3~(2)]{Cisneros-Seade-Snoussi:d-regular}).
\end{remark}

\medskip
\subsection{The analytic case: equivalence of fibrations}

In this subsection we consider $f\colon (\R^n,0) \to (\R^k,0)$ with $n\geq k\geq 2$, an analytic map with linear discriminant and the transversality property in the ball $\mathbb{B}_\e^n$.

Consider the locally trivial fibration \eqref{eq:MT} of Proposition~\ref{prop:MLF}. Composing it with the radial projection $\tilde{\pi}\colon \B_{\delta}^k\setminus \Delta_{f,\eta} \to \BS^{k-1}\setminus\mathcal{A}$ we get an equivalent locally trivial fibration
\[
 \tilde{f}:=\tilde{\pi}\circ f|\colon \B_{\e}^n \cap f^{-1}(\BS_\delta^{k-1} \setminus \Delta_{f,\eta}) \to \BS^{k-1} \setminus \mathcal{A}  \, .
\]

We want to show that this locally trivial fibration is equivalent to the locally trivial fibration \eqref{eq:FS}, provided that $f$ is $d$-regular. 
For this, we use the following generalization of a characterization of $d$-regularity given in
\cite[Theorem~3.7]{Cisneros-Menegon:EMFMLF} to the case of $f$ with linear discriminant. 

\begin{theo}[{\cite[Lemma~5.2]{Cisneros-Seade-Snoussi:d-regular}}]\label{lem:uni.con.str.0}  
The map $f$ is $d$-regular in the ball $\B_\e^n$, if and only if there exists  an analytic vector field $\tilde{w}$ on
 $\mathring{\B}_{\e} \setminus V$ which has the following properties:
\begin{enumerate}[(1)]\setlength{\itemsep}{0pt}
\item It is radial, {\it i.e.}, it is transverse to all spheres in $\mathring{\B}_{\e}$ centred at $0$.\label{it:pr1}
\item It is transverse to all the tubes $f^{-1}(\BS_\delta^{k-1}\setminus\Delta_\e)$.\label{it:pr3}
\item It is tangent to each $E_\theta$ for $\theta \notin\mathcal{A}_\eta$, whenever it is not empty.\label{it:pr2}
\end{enumerate}
\end{theo}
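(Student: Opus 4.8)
\emph{The plan} is to prove the two implications separately, following closely the arguments of \cite[Lemma~5.2]{Cisneros-Seade-Snoussi:d-regular} and \cite[Theorem~3.7]{Cisneros-Menegon:EMFMLF}; the only genuinely new point is the behaviour of the vector field along the part of $W:=f^{-1}(\Delta_\e)$ lying over $\Delta_\e\setminus\{0\}$, which is empty in the isolated-critical-value situation treated there. The implication ``$\tilde w$ exists $\Rightarrow$ $f$ is $d$-regular'' is immediate: fix $0<\e'<\e$, a point $\theta\in\BS_\eta^{k-1}\setminus\mathcal{A}_\eta$ with $E_\theta\neq\emptyset$, and $x\in E_\theta\cap\BS_{\e'}^{n-1}$. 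By property~(3), $\tilde w(x)\in T_xE_\theta$; by property~(1), $\tilde w(x)\notin T_x\BS_{\e'}^{n-1}$. Since $T_x\BS_{\e'}^{n-1}$ is a hyperplane of $T_x\R^n$, this forces $T_xE_\theta+T_x\BS_{\e'}^{n-1}=T_x\R^n$, i.e.\ $E_\theta$ is transverse to $\BS_{\e'}^{n-1}$ at $x$. As $x$, $\e'$ and $\theta$ are arbitrary, $f$ is $d$-regular in $\B_\e^n$.

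For the converse, assume $f$ is $d$-regular. By Proposition~\ref{prop:Dreg.ch} the map $\phi=f/\Vert f\Vert$ is a submersion on $\B_\e^n\setminus W$, with $\ker d\phi_x=T_xE_\theta$ for $x\in E_\theta$ and $\theta\notin\mathcal{A}_\eta$; moreover every point of $\B_\e^n\setminus W$ is a regular point of $f$, and $df_x$ carries $T_xE_\theta$ onto the line through $f(x)$. I would first build $\tilde w$ on $\B_\e^n\setminus W$. Let $u(x)$ be the orthogonal projection of the position vector $x$ onto $T_xE_\theta=\ker d\phi_x$: this is an analytic vector field, it is tangent to every $E_\theta$ with $\theta\notin\mathcal{A}_\eta$, and $\langle u(x),x\rangle=\Vert u(x)\Vert^{2}$, which is strictly positive by $d$-regularity. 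Thus $u$ already has properties~(1) and~(3). To gain property~(2), note that $df_x$ maps the rank-one distribution $\ker d\phi\cap(\ker df)^{\perp}$ isomorphically onto the line through $f(x)$, so there is a unit analytic vector field $\nu$ spanning it along which $\Vert f\Vert$ increases; one then sets $\tilde w=u+\rho\,\nu$, where $\rho>0$ is a suitable analytic function, small enough to preserve properties~(1) and~(3) and still large enough to make $d\Vert f\Vert_x(\tilde w(x))$ nonzero at the (nowhere dense) locus where $d\Vert f\Vert_x(u(x))=0$; this gives transversality to every tube $f^{-1}(\BS_\delta^{k-1}\setminus\Delta_\e)$, i.e.\ property~(2). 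It remains to extend $\tilde w$ across $W\setminus V$; there properties~(2) and~(3) are vacuous, since the tubes and all the leaves $E_\theta$ with $\theta\notin\mathcal{A}_\eta$ are disjoint from $W$, so only radiality must be preserved.

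\emph{The main obstacle} is precisely this extension. Near $W$ the kernels $\ker d\phi_x$ jump in dimension, so $u$ itself does not extend continuously; one must instead produce, in a neighbourhood of $W\setminus V$, an analytic vector field that is radial, tangent to whichever good leaf $E_\theta$ ($\theta\notin\mathcal{A}_\eta$) passes through each point, and agrees with $\tilde w$ away from $W$. The delicate part is to keep the radial component from collapsing as $x$ approaches $W$ — in particular as $x$ approaches a critical point of $f$ lying over $\Delta_\e\setminus\{0\}$ — and this is where the analyticity of $f$ enters, through a {\L}ojasiewicz inequality and the curve selection lemma (and, where convenient, the transversality property in $\B_\e^n$): a collapse would produce an analytic arc realising it, contradicting the choice of Milnor radius. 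Once this local analysis is carried out, patching yields the analytic field $\tilde w$ on $\mathring{\B}_\e^n\setminus V$ with properties~(1)--(3); the remaining verifications are a routine transcription of the isolated-critical-value argument.
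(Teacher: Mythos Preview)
Your approach diverges from the paper's in one essential way, and the divergence stems from a discrepancy inside the paper itself. The statement asserts that $\tilde w$ lives on $\mathring{\B}_\e\setminus V$, but the paper's entire proof is a single sentence: apply \cite[Theorem~3.7]{Cisneros-Menegon:EMFMLF} verbatim, \emph{replacing} $\mathring{\B}_\e^n\setminus V$ by $\mathring{\B}_\e^n\setminus f^{-1}(\Delta_\e)$. In other words, the paper silently works on the smaller set $\mathring{\B}_\e\setminus W$ (your $W=f^{-1}(\Delta_\e)$), where the isolated-critical-value construction goes through unchanged because every point there is a regular point of $f$ and $\phi$. This suffices for the only application, Theorem~\ref{theo_a}: the flow there is integrated along leaves $E_\theta$ with $\theta\notin\mathcal{A}_\eta$, all of which lie in $\B_\e\setminus W$, so nothing on $W\setminus V$ is ever needed.

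You take the stated domain at face value and therefore try to extend $\tilde w$ across $W\setminus V$, correctly identifying this as ``the main obstacle.'' But the extension is both unnecessary for the paper's purposes and not convincingly established in your sketch: near $W$ the kernels $\ker d\phi_x$ jump in dimension, the projected field $u$ fails to extend continuously, and your appeal to a {\L}ojasiewicz inequality and the curve selection lemma does not explain how one obtains an \emph{analytic} (rather than merely continuous or smooth) extension, nor why the radial component stays bounded away from zero near critical points of $f$ lying in $W\setminus V$. The clean fix is the paper's: drop the extension entirely and construct $\tilde w$ only on $\mathring{\B}_\e\setminus W$. On that domain your easy direction and the $u+\rho\,\nu$ construction are the right idea; the remaining delicate point---choosing an analytic $\rho>0$ that is simultaneously small enough to preserve $\langle\tilde w(x),x\rangle>0$ and yet makes $d\Vert f\Vert_x(\tilde w(x))$ nonvanishing everywhere---still needs a genuine argument, since the two constraints can pull in opposite directions at points where $\langle\nu(x),x\rangle<0$ and $d\Vert f\Vert_x(u(x))<0$.
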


\begin{proof}
 Same as the proof of \cite[Theorem~3.7]{Cisneros-Menegon:EMFMLF} but replacing $\mathring{\B}_{\e}^n\setminus V$ by $\mathring{\B}_{\e}^n\setminus f^{-1}(\Delta_\e)$, see \cite[Remark~3.9]{Cisneros-Menegon:EMFMLF}.
\end{proof}

Now we can use the vector field $\tilde{w}$ of Theorem~\ref{lem:uni.con.str.0} to give the equivalence of fibrations.

\begin{theo}\label{theo_a}
Let $f\colon (\R^n,0) \to (\R^k,0)$ with $n\geq k\geq 2$ be a map of class $C^\ell$ with $\ell\geq 1$. Let $\e_0$ be a Milnor radius for $f$ and let $0<\e<\e_0$.
Suppose $f$ has linear discriminant, has the transversality property and that it is $d$-regular in the ball $\mathbb{B}_\e^n$. Then the fibre bundles:
\begin{gather*}
\tilde{f}:=\pi_\delta\circ f_\e|\colon \B_\e^n \cap f^{-1}(\BS_\delta^{k-1} \setminus \mathcal{A}_\delta) \to \BS^{k-1} \setminus \mathcal{A} \\
\intertext{and}
\phi\colon \BS_\e^{n-1} \setminus f^{-1}(\Delta) \to \BS^{k-1} \setminus \mathcal{A}
\end{gather*}
are equivalent, where $\pi_\delta\colon \BS_{\delta}^{k-1} \to \BS^{k-1}$ is the radial projection.
\end{theo}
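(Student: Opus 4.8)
The plan is to follow Milnor's classical strategy: use the flow of the vector field $\tilde w$ furnished by Theorem~\ref{lem:uni.con.str.0} to \emph{inflate} the Milnor--L\^e tube fibration onto the sphere. Since $\pi_\delta\colon\BS_\delta^{k-1}\to\BS^{k-1}$ is a diffeomorphism, $\tilde f$ is nothing but the tube fibration \eqref{eq:MT} of Proposition~\ref{prop:MLF} with its base rescaled; hence it suffices to exhibit a fiber-preserving $C^\ell$-diffeomorphism from its total space $\B_\e^n\cap f^{-1}(\BS_\delta^{k-1}\setminus\mathcal A_\delta)$ onto $\BS_\e^{n-1}\setminus f^{-1}(\Delta)$ covering the identity of $\BS^{k-1}\setminus\mathcal A$. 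By Proposition~\ref{prop:Dreg.ch} and $d$-regularity, Theorem~\ref{lem:uni.con.str.0} supplies a vector field $\tilde w$ on $\mathring\B_\e^n\setminus f^{-1}(\Delta_\e)$ that is radial, transverse to the tubes $f^{-1}(\BS_\delta^{k-1}\setminus\Delta_\e)$, and tangent to every $E_\theta$ with $\theta\notin\mathcal A_\eta$. After rescaling $\tilde w$ so that $\langle\tilde w(x),x\rangle\equiv1$ and orienting it outward, the norm $\|x\|$ grows with unit speed along each integral curve, and along an integral curve contained in a fixed $E_\theta$ the function $\|f\|$ is strictly monotone (here the transversality to the tubes is used).

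\emph{From the tube to the outer part of the sphere.} Fix $a\in\BS^{k-1}\setminus\mathcal A$ and let $\theta_\delta$ be the point of $\BS_\delta^{k-1}$ on the ray $\LL_a$, so that $\tilde f^{-1}(a)=\B_\e^n\cap f^{-1}(\theta_\delta)=E_a\cap f^{-1}(\BS_\delta^{k-1})$. Because $\LL_a$ misses $\Delta_\e$, the manifold $E_a=f^{-1}(\LL_a)$ avoids $f^{-1}(\Delta_\e)$, hence $V$; since $\tilde w$ is tangent to $E_a$ its flow preserves $E_a$, and since $\tilde w$ is transverse to $f^{-1}(\BS_\delta^{k-1})$ and to every sphere centred at $0$, inside $E_a$ it is transverse to $\tilde f^{-1}(a)$ and to $G_a:=E_a\cap\BS_\e^{n-1}$. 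A point $x_0\in\tilde f^{-1}(a)$ then flows, staying in the compact set $E_a\cap\{\,\delta\le\|f\|\le\max_{x\in\B_\e^n}\|f(x)\|\,,\ \|x\|\le\e\,\}$, to a point of $G_a^+:=G_a\cap\{\|f\|\ge\delta\}$ reached at time $\e-\|x_0\|$; conversely, any point of $G_a^+$ flowed backwards meets $\{\|f\|=\delta\}$, i.e.\ $\tilde f^{-1}(a)$, before its norm can become so small that $\|f\|$ would drop below $\delta$. Thus this flow is a $C^\ell$-diffeomorphism $\tilde f^{-1}(a)\to G_a^+$, and since $\tilde w$ is $C^\ell$ and both fibrations are locally trivial these diffeomorphisms assemble into a fiber-preserving $C^\ell$-diffeomorphism $H_1$ from the total space of $\tilde f$ onto $\mathcal S^+:=\bigl(\BS_\e^{n-1}\setminus f^{-1}(\Delta)\bigr)\cap\{\|f\|\ge\delta\}$, with $\phi\circ H_1=\tilde f$.

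\emph{Absorbing the inner part, and the main difficulty.} It remains to identify $\phi|_{\mathcal S^+}$ with $\phi$ on the whole of $\BS_\e^{n-1}\setminus f^{-1}(\Delta)$. The leftover piece $\mathcal S^-:=\BS_\e^{n-1}\cap f^{-1}(\mathring\B_\delta^k)\setminus f^{-1}(\Delta_\e)$ is fibered over $\mathring\B_\delta^k\setminus\Delta_\e$ by $f$: by the transversality property in $\B_\e^n$ this restriction is a submersion, proper over compact sets, hence locally trivial. Since $\Delta_\e$ is a cone, the radial retraction of $\mathring\B_\delta^k\setminus\Delta_\e$ onto $\BS_\delta^{k-1}\setminus\Delta_\e$ commutes with the projection $\tilde\pi$, so lifting it through this fibration produces a fiber-preserving deformation retraction of $\mathcal S^-$ together with its boundary onto $\BS_\e^{n-1}\cap f^{-1}(\BS_\delta^{k-1}\setminus\Delta_\e)$; gluing this to $H_1$ along that common boundary---exactly as the pieces $\phi_1,\phi_2$ were glued in the proof of Theorem~\ref{prop_2}---yields the desired fiber-preserving $C^\ell$-diffeomorphism, which is what equivalence of the two fibrations means here. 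The delicate point is the inflation step: one has to verify that the rescaled $\tilde w$ is complete for the required time, i.e.\ that no integral curve issuing from the tube leaves $\mathring\B_\e^n\setminus f^{-1}(\Delta_\e)$ before reaching $\BS_\e^{n-1}$, and that the fiberwise diffeomorphisms vary smoothly enough to patch into one global map. This is where the radiality of $\tilde w$, its tangency to the $E_\theta$'s and its transversality to the tubes are simultaneously used, and where the argument runs parallel---with $W=f^{-1}(\Delta_\e)$ in place of $V$---to Milnor's original construction and to that of \cite{Cisneros-Menegon:EMFMLF}.
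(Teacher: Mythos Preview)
Your approach is essentially the paper's own: use the flow of $\tilde w$ to carry the tube diffeomorphically onto the outer portion $\mathcal S^+=(\BS_\e^{n-1}\setminus W)\setminus f^{-1}(\mathring\B_\delta^k)$ of the sphere, then absorb the remaining inner portion $\mathcal S^-$ as a collar. The paper is in fact terser at the second step---it simply observes that the fibration $\phi|_{\mathcal S^+}$ ``can be extended'' to $\phi$ on all of $\BS_\e^{n-1}\setminus W$, citing the proof of Theorem~\ref{prop_2}---whereas you spell out why $\overline{\mathcal S^-}$ is fibrewise a product with an interval. One wording issue in your second paragraph: a deformation retraction $\overline{\mathcal S^-}\to\partial\mathcal S^-$ is not a diffeomorphism, so ``gluing this to $H_1$'' does not literally produce a fiber-preserving $C^\ell$-diffeomorphism from the tube to the full sphere complement. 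The correct formulation is that the collar structure you have exhibited on $\overline{\mathcal S^-}$ yields a fiber-preserving diffeomorphism $\mathcal S^+\xrightarrow{\ \cong\ }\mathcal S^+\cup\mathcal S^-$ (push the collar in), and composing this with your $H_1$ gives the desired equivalence. With that adjustment your argument matches the paper's, only with more of the details made explicit.
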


\begin{proof}
In the proof of Theorem~\ref{prop_2} we saw that the restriction of $\phi$ given by  $\phi|\colon\left(\BS_\e^{n-1} \setminus W\right)\setminus f^{-1}(\mathring{\B}_\delta^k)\to \BS^{k-1} \setminus 
\mathcal{A}$ is a fibre bundle.
The flow associated to the vector field $\tilde {w}$ of Theorema~\ref{lem:uni.con.str.0}
defines in the usual way a diffeomorphism $\tau$ between $\B_\e^n \cap f^{-1}(\BS_\delta^{k-1} \setminus \Delta_\e)$ and 
$\left(\BS_\e^{n-1} \setminus W\right)\setminus f^{-1}(\mathring{\B}_\delta^k)$:
for a point $x\in \B_\e^n \cap f^{-1}(\BS_\delta^{k-1} \setminus \Delta_\e)$ follow the solution of $\tilde{w}$
that passes through $x$ till it meets $\BS_\e^{n-1}$ at some point $\hat{x}$.
This point exists and is unique because $\tilde{w}$ satisfies
conditions (\ref{it:pr1}) and (\ref{it:pr3}) in Theorema~\ref{lem:uni.con.str.0}. Define $\tau(x)=\hat{x}$.
By condition (\ref{it:pr2}) in Theorema~\ref{lem:uni.con.str.0} the solutions of $\tilde{w}$ lie in an $E_\theta$,
then we have that $\tilde{f}(x):=\Phi(x)=\Phi(\hat{x})=:\phi(\hat{x})$. Therefore, the diffeomorphism 
$\tau\colon \B_\e^n \cap f^{-1}(\BS_\delta^{k-1} \setminus \Delta_\e)\to \left(\BS_\e^{n-1} \setminus W\right)\setminus f^{-1}(\mathring{\B}_\delta^k)$ gives an equivalence of fibre bundles
\[
\xymatrix{
\B_\e^n \cap f^{-1}(\BS_\delta^{k-1} \setminus \Delta_f^\eta)\ar[rr]^{\tau}\ar[rd]_{\tilde{f}}& & \left(\BS_\e^{n-1} \setminus W\right)\setminus f^{-1}(\mathring{\B}_\delta^k)\ar[ld]^{\phi}\\
& \BS^{k-1}\setminus \mathcal{A}&
}
\]
We saw  in the proof of Theorem~\ref{prop_2} that the locally trivial fibration $\phi|\colon\left(\BS_\e^{n-1} \setminus W\right)\setminus f^{-1}(\mathring{\B}_\delta^k)\to \BS^{k-1} \setminus 
\mathcal{A}$ can be extended to the locally trivial fibration
$\phi\colon \BS_\e^{n-1} \setminus W\to \BS^{k-1} \setminus \mathcal{A}$.
\end{proof}

Now we want to apply Theorem \ref{theo_a} in Example \ref{ex_sg}. First, let us prove the following two propositions:

\begin{prop} \label{prop_surj}
Let $f: (\R^n,0) \to (\R^k,0)$ be a map of class $C^\ell$. If  $V(f) := f^{-1}(0)$ is not contained in the critical set $\Sigma_f$ of $f$, then $f$ is surjective.
\end{prop}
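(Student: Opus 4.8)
The statement is local and elementary: we need to show that if $V(f) = f^{-1}(0)$ is not entirely contained in the critical locus $\Sigma_f$, then the image of $f$ contains a neighbourhood of $0$ in $\R^k$. The plan is to exploit a single regular point to produce an open piece of the image, and then use connectedness of $V(f)$ together with the fact that $0$ lies in the closure of the regular locus of $V(f)$ to conclude that $0$ itself is an interior point of the image.

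First I would pick a point $q \in V(f) \setminus \Sigma_f$. Since $q$ is a regular point of $f$ and $n \ge k$, the map $f$ is a submersion at $q$, so by the submersion theorem (constant rank / implicit function theorem) $f$ maps every neighbourhood of $q$ onto a neighbourhood of $f(q) = 0$ in $\R^k$. Hence $0 \in \Ima(f)$ is already an interior point of the image, and in particular $\Ima(f)$ contains an open neighbourhood of the origin. This is essentially the whole argument: surjectivity here means \emph{locally surjective} in the sense of Remark~\ref{rem:surjective}, i.e. the image contains an open neighbourhood of $0$, so one regular point of the zero fibre suffices.

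The one subtlety I would address carefully is the role of the hypothesis $\dim V(f) > 0$ implicit in the setup (and the fact that we are working with map-germs): one must make sure that $V(f) \setminus \Sigma_f$ is genuinely nonempty under the stated hypothesis, which is exactly what ``$V(f)$ is not contained in $\Sigma_f$'' gives us directly — so no extra work is needed there. I would also note that the argument does not require analyticity: $C^1$ suffices for the submersion theorem, matching the generality ``map of class $C^\ell$'' in the statement. If one wants the stronger conclusion that the germ of $f$ at $0$ is surjective onto a \emph{fixed} neighbourhood independent of representatives, one observes that the open set produced at $q$ can be taken inside any prescribed ball around $q$, and $q$ can be chosen arbitrarily close to $0$ along $V(f)$; since $f(q)=0$ regardless, the image still covers a neighbourhood of $0$.

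The main (and essentially only) obstacle is purely expository: making precise what ``surjective'' means for a map-germ and checking that the regular point $q$ can be taken in an arbitrarily small representative of the germ. There is no real difficulty — this proposition is a lemma whose proof is one application of the implicit function theorem.
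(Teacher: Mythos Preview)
Your proposal is correct and follows essentially the same argument as the paper: choose a point $q\in V(f)\setminus\Sigma_f$, apply the submersion (implicit function) theorem there to see that $f$ is locally a projection, hence its image contains a neighbourhood of $f(q)=0$. The only cosmetic difference is that the paper states up front that such $q$ can be taken in any $\B_\e^n$ (the germ interpretation of ``$V(f)\not\subset\Sigma_f$''), whereas you address this in your final paragraph; also, the mention of ``connectedness of $V(f)$'' in your plan is unnecessary and is, correctly, not used in your actual argument.
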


\begin{proof}
By hypothesis, for any $\e>0$ there exists $x \in V(f) \cap \B_\e^n$ such that $x \notin \Sigma_f$. So, up to a change of coordinates, $f$ is a projection on an open neighborhood $W_x$ of $x$ in 
$\B_\e^n$. Hence $f(W_x)$ is an open neighborhood of $0$ in $\R^k$.
\end{proof}

\begin{prop} \label{prop_tp}
Let $f: (\R^n,0) \to (\R^k,0)$ be a real analytic map. If $V(f) = \{0\}$ or if $\Sigma_f(\mathring{\B}_\e)\cap V(f)\subset \{0\}$, then $f$ has the transversality property.
\end{prop}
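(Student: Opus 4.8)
The plan is to argue by contradiction via a compactness argument of Curve-Selection-Lemma type, after reducing non-transversality to a linear-algebra condition on the gradients of the components of $f$.

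\emph{Reductions.} The hypothesis $\Sigma_f(\mathring{\B}_\e)\cap V(f)\subset\{0\}$ only becomes stronger as the radius shrinks, and if it holds for some radius then for every strictly smaller $\e$ one even has $\Sigma_f\cap\B_\e^n\cap V(f)\subset\{0\}$ on the \emph{closed} ball, since $\B_\e^n$ then lies inside the original open ball. So fix $\e>0$ small enough that $\e$ is a Milnor radius for $f$ and $\Sigma_f\cap\B_\e^n\cap V(f)\subset\{0\}$; I will show $f$ has the transversality property in $\B_\e^n$. (The case $V(f)=\{0\}$ is included and the argument below then collapses to its last line; when $f$ is a nice map-germ this moreover yields the strong transversality property.)

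\emph{Linear reformulation of non-transversality.} Write $f=(f_1,\dots,f_k)$. Fix $\delta$ with $0<\delta\ll\e$, take $y\in\B_\delta^k\setminus\Delta_\e$ and $x\in f^{-1}(y)\cap\BS_\e^{n-1}$. Since $y\notin\Delta_\e=f(\Sigma_f\cap\B_\e^n)$ and $x\in\B_\e^n$, the point $x$ is a regular point of $f$, so $f^{-1}(y)$ is a $C^\ell$-submanifold near $x$ with tangent space $\ker df_x$, while $T_x\BS_\e^{n-1}=x^\perp$. Passing to orthogonal complements, $\ker df_x+x^\perp=\R^n$ fails precisely when $x\in(\ker df_x)^\perp=\mathrm{span}\{\nabla f_1(x),\dots,\nabla f_k(x)\}$. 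Thus $f^{-1}(y)$ is not transverse to $\BS_\e^{n-1}$ at $x$ if and only if $x=\sum_{i=1}^{k}\lambda_i\,\nabla f_i(x)$ for some $(\lambda_1,\dots,\lambda_k)\in\R^k$; the same criterion applies verbatim to $V(f)=f^{-1}(0)$ at any of its regular points.

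\emph{The contradiction.} Suppose no $\delta$ works. Then there are $y_j\to 0$ in $\R^k\setminus\Delta_\e$ (hence $y_j\neq 0$, as $0\in\Delta_\e$), points $x_j\in f^{-1}(y_j)\cap\BS_\e^{n-1}$, and $\lambda^{(j)}\in\R^k$ with $x_j=\sum_i\lambda_i^{(j)}\nabla f_i(x_j)$. By compactness of $\BS_\e^{n-1}$ we may assume $x_j\to x_\infty\in\BS_\e^{n-1}$; then $f(x_\infty)=\lim y_j=0$, so $x_\infty\in V(f)\setminus\{0\}$, whence $x_\infty\notin\Sigma_f$ by the choice of $\e$. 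Therefore $\nabla f_1(x_\infty),\dots,\nabla f_k(x_\infty)$ are linearly independent, i.e. the Gram matrix $G_\infty=(\langle\nabla f_a(x_\infty),\nabla f_b(x_\infty)\rangle)_{a,b}$ is invertible; by continuity of the $\nabla f_i$ the Gram matrices $G_j$ at $x_j$ converge to $G_\infty$, so are invertible for large $j$, and $\lambda^{(j)}=G_j^{-1}\big(\langle x_j,\nabla f_a(x_j)\rangle\big)_a$. Since $\|x_j\|=\e$ and the $\nabla f_i$ are bounded near $x_\infty$, the right-hand side is bounded; after a further subsequence $\lambda^{(j)}\to\lambda^\infty$, giving $x_\infty=\sum_i\lambda_i^\infty\nabla f_i(x_\infty)$. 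By the reformulation above applied at $x_\infty$ — a regular point of $f$ on $V(f)$, so $V(f)$ is smooth there and locally a union of top-dimensional strata of $V$ — this says $V(f)$ is not transverse to $\BS_\e^{n-1}$ at $x_\infty$, contradicting that $\e$ is a Milnor radius. Hence some $\delta$ works.

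\emph{Main obstacle.} The delicate point is the boundedness of the Lagrange-type multipliers $\lambda^{(j)}$; this is exactly where the regularity of the limit point $x_\infty$ enters, through the uniform invertibility of the Gram matrices near $x_\infty$ (which in turn uses the hypothesis on $\Sigma_f\cap V(f)$). One could alternatively invoke the Curve Selection Lemma for the subanalytic set of ``bad'' pairs $(x,y)$ — legitimate since $f$ is real analytic — and estimate along a real-analytic arc, but the compactness argument above is self-contained and uses analyticity only to know that $\Sigma_f$ is closed and that a Milnor radius exists.
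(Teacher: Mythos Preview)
Your argument is correct and follows the same overall skeleton as the paper's proof: both use the hypothesis to guarantee that every point of $V(f)\cap\BS_\e^{n-1}$ is a regular point of $f$, invoke the Milnor radius to know $V(f)$ meets $\BS_\e^{n-1}$ transversally there, and then use compactness of $\BS_\e^{n-1}$ to push this transversality to nearby fibres and obtain $\delta$.

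The difference is purely in how the stability step is executed. The paper argues qualitatively: near a regular point $x$ of $V(f)$ on the sphere, the implicit function theorem puts $f$ in projection form, so the nearby fibres are $C^1$-close to $V(f)$ and inherit transversality; then a finite cover of the compact set $V(f)\cap\BS_\e^{n-1}$ gives a uniform neighbourhood $W$, and continuity of $f$ gives $\delta$. (The case $V(f)=\{0\}$ is handled separately, using that $\|f\|$ is bounded below on $\BS_\e^{n-1}$ so small fibres miss the sphere entirely.) You instead run the contrapositive sequentially and encode non-transversality by the explicit Lagrange condition $x\in\mathrm{span}\{\nabla f_i(x)\}$; your Gram-matrix step is exactly the quantitative version of ``$f$ is a submersion near $x_\infty$'', used to make the multipliers $\lambda^{(j)}$ converge. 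This buys you a uniform treatment of the two cases and avoids invoking the local normal form, at the cost of a small linear-algebra computation; the paper's version is shorter but leans on the implicit function theorem.
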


\begin{proof}
Suppose that $V(f) = \{0\}$ and fix $\e>0$. By continuity of $f$ there exists $\delta>0$ sufficiently small such that $f^{-1}(t) \subset \B_{\e}^n$, for any $t \in \B_\delta^k$. So $f$ has the 
transversality property.

Now suppose that $\dim V(f) >0$. Since $f$ is real analytic, there exists $\e_0>0$ sufficiently small such that $V(f)$ intersects the sphere $\BS_\e^{n-1}$ transversally in $\R^n$, for any $\e$ with 
$0<\e \leq \e_0$. 

Fix $\e$ and consider the restriction $f_|$ of $f$ to $\BS_\e^{n-1}$. Its critical points are the critical points of $f$ that are in $\BS_\e^{n-1}$ and the regular points $x$ of $f$ that are in 
$\BS_\e^{n-1}$ such that $f^{-1}(f(x))$ intersect $\BS_\e^{n-1}$ not transversally at $x$ in $\R^n$. 

Since $\Sigma_f(\mathring{\B}_\e) \cap V(f) \subset \{0\}$, it follows that any $x \in V(f) \cap \BS_\e^{n-1}$ is a regular point of $f$. Up to a change of coordinates, $f$ is a projection on an open neighborhood of 
$x$ in $\B_\e^n$, therefore the fibres of $f$ near $V$ are transverse to $\BS_\e^{n-1}$. So there exists an open neighborhood $W$ of $V(f) \cap \BS_\e^{n-1}$ in $\BS_\e^{n-1}$ such that any $y \in W$ 
is a regular point of $f_|$. So we just have to take $\delta = \delta(\e)$ sufficiently small such that $f^{-1}(\B_\delta^k) \cap \BS_\e^{n-1}$ is contained in $W$.
\end{proof}

\begin{cor}
Any map $(f,g): (\K^n,0) \to (\K^2,0)$ as in Exam\-ple \ref{ex_sg} satisfies all the hypothesis of Theorem \ref{theo_a}, hence it induces equivalent fibrations on the tube and the sphere.
\end{cor}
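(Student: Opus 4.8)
The corollary is a verification that the maps $(f,g)$ of Example~\ref{ex_sg} satisfy the hypotheses of Theorem~\ref{theo_a}, together with the standing local surjectivity assumption (Remark~\ref{rem:surjective}). The plan is as follows. First, $(f,g)$ is $\K$-analytic; when $\K=\C$ we regard it as a real analytic map $\R^{2n}\to\R^{4}$, so in all cases $(f,g)$ is of class $C^\ell$ for every $\ell$ and the target has dimension $k=2\le n$. A Milnor radius $\e_0$ exists because any $C^\ell$ map admits a Whitney stratification adapted to its zero set, so we fix $0<\e<\e_0$. Two of the required properties were already checked in Example~\ref{ex_sg}: the map $(f,g)$ has linear discriminant and it is $d$-regular. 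Both are global statements --- the equations and inequations cutting out $\Delta$ and each $E_\theta$ are homogeneous --- so they hold in every ball $\B_\e^n$. What remains is to produce local surjectivity and the transversality property.

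The key point for both is the position of the critical set relative to $V:=(f,g)^{-1}(0)$. Recall from Example~\ref{ex_sg} that the critical set $\Sigma$ of $(f,g)$ is the union of the coordinate axes of $\K^n$. On the $i$-th axis one has $(f,g)=(a_ix_i^p,\,b_ix_i^p)=x_i^p\,(a_i,b_i)$, and by the weak hyperbolicity hypothesis $(a_i,b_i)\neq(0,0)$; hence this vanishes only at the origin. Therefore $\Sigma\cap V=\{0\}$. On the other hand Example~\ref{ex_sg} shows that $\dim V>0$ in all the cases considered, so in particular $V$ is not contained in $\Sigma$, and Proposition~\ref{prop_surj} gives that $(f,g)$ is surjective. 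Moreover $\Sigma_{(f,g)}(\mathring{\B}_\e)\cap V\subseteq\Sigma\cap V=\{0\}$, so Proposition~\ref{prop_tp}, applied to the real analytic map underlying $(f,g)$, shows that $(f,g)$ has the transversality property.

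Collecting these facts: $(f,g)$ is a locally surjective analytic map with $n\ge k=2$ having linear discriminant, the transversality property, and $d$-regularity in the ball $\B_\e^n$. Thus all hypotheses of Theorem~\ref{theo_a} are met, and we conclude that the tube fibration $\tilde f$ and the sphere fibration $\phi$ of that theorem are equivalent, which is the assertion of the corollary. I do not expect a genuine obstacle here: the only mildly delicate point is the uniform treatment of the real and complex cases, handled by viewing a complex $(f,g)$ as a real analytic map, and the elementary computation $\Sigma\cap V=\{0\}$ --- which is precisely where the weak hyperbolicity hypothesis of Example~\ref{ex_s} is used.
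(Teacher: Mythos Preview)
Your proposal is correct and follows essentially the same route as the paper: the linear discriminant and $d$-regularity come from Example~\ref{ex_sg}, while local surjectivity and the transversality property are obtained from Propositions~\ref{prop_surj} and~\ref{prop_tp} via the observation that $\Sigma\cap V=\{0\}$. One small slip: after correctly noting that for $\K=\C$ the underlying real map goes $\R^{2n}\to\R^4$, you then write ``the target has dimension $k=2$''; in the complex case $k=4$, but this does not affect the argument.
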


\begin{question}
Are fibrations \eqref{eq:MT} and \eqref{eq:FS} also equivalent when $f$, with linear  discriminant, transversality property and $d$-regularity, is only of class $C^\ell$ with $\ell\geq 1$?
\end{question}

\medskip
\section{Conic homeomorphisms and \texorpdfstring{$d_h$}{dh}-regularity}  
\label{section_3}

In this section, we want to extend the concept of $d$-regularity, allo\-wing some maps to become $d$-regular after a homeomorphism on the target space. 

Recall that given $\eta>0$, for each point $\theta \in \BS_\eta^{k-1}$ the set $\LL_\theta \subset \R^k$ is the open segment of line that starts in the origin and ends at the point $\theta$ (but not 
containing these two points).

\begin{defi}
Let $h\colon (\R^k,0) \longrightarrow (\R^k,0)$ be a homeomorphism. Suppose that there exists $\eta>0$ sufficiently small, such that the restriction of $h$ to the ball $\B_\eta^k$
\begin{equation}\label{eq:conic.homeo}
h_\eta\colon \B_\eta^k \to \mathcal{B}_\eta^k \, ,
\end{equation}
with $\mathcal{B}_\eta^k := h(\B_\eta^k)$, satisfies the following:
\begin{itemize}
\item[$(i)$] For each $\theta \in \BS_\eta^{k-1}$ the image $h_\eta(\LL_\theta)$ is a path in $\R^k$ of class $C^\ell$ with $\ell\geq 1$;
\item[$(ii)$] The inverse map $h^{-1}$ of $h$ is of class $C^\ell$  with $\ell\geq 1$ outside the origin;
\item[$(iii)$] The map $h^{-1}$ is a submersion outside the origin.
\end{itemize}
We say that the restriction \eqref{eq:conic.homeo} is a \textit{conic homeomorphism}. 
\end{defi}

The identity map is obviously a conic homeomorphism. Given a conic homemorphism $h_\eta\colon \B_\eta^k \to \mathcal{B}_\eta^k$, set:
\begin{equation*}
 \varphi_{h,\theta} := h(\LL_\theta) \, ,
\end{equation*}
for each $\theta \in \BS_\eta^{k-1}$.

\begin{example} \label{ex_2}
Consider the map $h(u,v)=(u,v^3)$ , whose inverse is given by $h^{-1}(u,v)=(u,\sqrt[3]{v})$. Clearly, $h$ is a differentiable homeomorphism, although its inverse $h^{-1}$ is not differentiable 
at the $u$-axis. It is easy to check that $h^{-1}$ is a conic homeomorphism.
\end{example}

\begin{example} \label{ex_3}
Now consider the map $h\colon (\R^2,0) \to (\R^2,0)$ given by:
\[
h(u,v)= 
\begin{cases}
(\sqrt{u}, \frac{v}{2}), &\text{if $u \geq 0$;} \\
(-\sqrt{-u}, \frac{v}{2}), &\text{if $u < 0$.}
\end{cases} 
\]
whose inverse is given by 
\[
 h^{-1}(u,v)= 
\begin{cases}
(u^2, 2v), &\text{if $u \geq 0$;} \\
(-u^2, 2v), &\text{if $u < 0$.} 
\end{cases} 
\]
One can check that $h$ is a conic homeomorphism and the inverse map $h^{-1}$ is not.
\end{example}

\medskip
If $h\colon \B_\eta^k \to \mathcal{B}_\eta^k$ is a conic homeomorphism, then each point $x \in \mathcal{B}_\eta^k$ belongs to some smooth path $\varphi_{h,\theta(x)}$. Precisely, $\theta(x) = 
\frac{h^{-1}(x)}{\|h^{-1}(x)\|}$.

So if we set
\[
 \mathcal{S}_\eta^{k-1} := h(\BS_\eta^{k-1})  \, ,
\]
the conic homeomorphism $h$ induces a continuous and surjective map
\[
\begin{array}{cccc}
\xi_h \ : & \! \mathcal{B}_\eta^k \setminus \{0\} & \! \longrightarrow & \! \mathcal{S}_\eta^{k-1} \\
& \! x & \! \longmapsto & \!  h \big( \eta \frac{h^{-1}(x)}{\|h^{-1}(x)\|} \big)
\end{array}
\, .
\]
That is, $\xi_h(x)$ is the point where the smooth curve $\varphi_{h,\theta(x)}$ that contains $x$ intersects $\mathcal{S}_\eta^{k-1}$. In other words, $\xi_h(x)$ sends each smooth curve 
$\varphi_{h,\theta}$ to the point $h(\theta) \in \mathcal{S}_\eta^{k-1}$.

So for each $\theta$ in $\BS_{\eta}^{k-1}$ we have that
\[
 h(\LL_\theta) := \varphi_{h,\theta} = \xi_h^{-1}(h(\theta)) \, .
\]

Now, given a map $f\colon (\R^n,0) \to (\R^k,0)$ and a conic homeomorphism $h: (\R^k,0) \to (\R^k,0)$, we define the map
\[
 f_h := h^{-1} \circ f \, ,
\]
which we call a {\it conic modification} of $f$:
\[
 \xymatrix { 
& (\R^n,0) \ar[d]^f \ar[dl]_{f_h} &  \\ 
(\R^k,0) \ar[r]^{h} & (\R^k,0) \ . \\
}
\]
Once a representative $h\colon \B_\eta^k \to \mathcal{B}_\eta^k$ of $h$ is fixed, for each $\theta$ in $\BS_{\eta}^{k-1}$ we set
\[
 E_{f,h,\theta} := f^{-1}(\varphi_{h,\theta}) \, .
\]

\begin{remark} \label{remark_eq}
For each $\theta \in \BS_{\eta}^{k-1}$ we have that
\[
 E_{f,h,\theta} = (f_h)^{-1}(\LL_\theta) =   \left(\eta \frac{f_h}{\| f_h \|} \right)^{-1}(\theta) = E_{f_h,id,\theta}  \, .
\]
\end{remark}

One should also notice that $E_{f,h,\theta}$ may be the empty set for some (or for any) $\theta \in \BS_{\eta}^{k-1}$, since $f$ is not necessarily surjective.

\begin{defi} 
We say that a map $f\colon (\R^n,0) \to (\R^k,0)$ and a conic homeomorphism $h: \B_\eta^k \to \mathcal{B}_\eta^k$ are \textit{compatible} in a subset $\mathcal{C} \subset \BS_{\eta}^{k-1}$ if 
$E_{f,h,\theta}$ is a differentiable manifold, for each $\theta \in \mathcal{C}$. We say that $f$ and $h$ are \textit{compatible} if $f$ and $h$ are compatible in $\BS_{\eta}^{k-1}$.
\end{defi}

\begin{remark} \label{remark_obs}
Observe that if $f$ has an isolated critical value, then any conic homeomorphism $h$ is compatible with $f$, since each $E_{f,h,\theta}$ is the inverse image by $f$ of a path $\varphi_{h,\theta}$ of 
class $C^\ell$ that contains no critical value.
\end{remark}

Now we can define $d$-regularity up to homemorphism:

\begin{defi}[$d_h$-regularity] \label{defi_dh}
Let $f\colon (\R^n,0) \to (\R^k,0)$ be a map of class $C^\ell$ with $\ell\geq 1$. Let $\e_0$ be a Milnor radius for $f$ and let $0<\e<\e_0$.
Let $h\colon \B_\eta^k \to \mathcal{B}_\eta^k$ be a conic homeomorphism compatible with $f$ in $\mathcal{C} \subset \BS_{\eta}^{k-1}$ such that $f(\B_{\e_0}^n) \subset \mathring{\mathcal{B}}_\eta^k$. 
We say that $f$ is \textit{$d_h$-regular relative to $\mathcal{C}$ in the ball $\B_\e^n$} if for any $0<\e' \leq \e$ 
the sphere $\BS_{\e'}^{n-1}$ intersects $E_{f,h,\theta}$ transversally, whenever such intersection is not empty for each $\theta\in\mathcal{C}$. 
If $\mathcal{C} = \BS_{\eta}^{k-1}$, we just say that $f$ is \textit{$d_h$-regular in the ball $\B_\e^n$}.
\end{defi}

Since by Remark \ref{remark_eq} we have that $E_{f,h,\theta} = E_{f_h,id,\theta}$, it follows:

\begin{prop} \label{lemma_4}
Let $f\colon (\R^n,0) \to (\R^k,0)$ be a $C^\ell$-map with $\ell\geq 1$ and let $h\colon \B_\eta^k \to \mathcal{B}_\eta^k$ be a conic homeomorphism. 
\begin{itemize}
\item[$(i)$] The map $f$ is compatible with $h$ in $\mathcal{C}$ if and only if the conic modification $f_h:= h^{-1} \circ f$ is compatible with the identity map in $\mathcal{C}$. 
\item[$(ii)$] Suppose that $f$ and $h$ are compatible in $\mathcal{C} \subset \BS_{\eta}^{k-1}$. Then $f$ is $d_h$-regular relative to $\mathcal{C}$  in the ball $\B_\e^n$
if and only if $f_h$ is $d$-regular relative to $\mathcal{C}$ in the ball $\B_\e^n$.
\end{itemize}
\end{prop}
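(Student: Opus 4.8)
The plan is to obtain both statements as formal consequences of Remark~\ref{remark_eq}, which identifies $E_{f,h,\theta}$ with $E_{f_h,id,\theta}$. The first thing I would check is that this identification is legitimate in the $C^\ell$ setting: for every $\theta \in \BS_\eta^{k-1}$ the open segment $\LL_\theta$ omits the origin, hence so does $\varphi_{h,\theta} = h(\LL_\theta)$ (recall $h(0)=0$), so $E_{f,h,\theta} = f^{-1}(\varphi_{h,\theta})$ is contained in $\R^n \setminus f^{-1}(0)$. On that set $h^{-1}$ is of class $C^\ell$ by property $(ii)$ of a conic homeomorphism, so $f_h = h^{-1}\circ f$ is a well-defined $C^\ell$ map on a neighbourhood of each $E_{f,h,\theta}$, and the computation of Remark~\ref{remark_eq}, namely $E_{f,h,\theta} = (f_h)^{-1}(\LL_\theta) = E_{f_h,id,\theta}$, makes sense. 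This small verification is really the only point that needs attention; everything after it is bookkeeping.

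For part $(i)$ I would simply unwind the definition of compatibility on both sides: $f$ is compatible with $h$ in $\mathcal{C}$ means $E_{f,h,\theta}$ is a differentiable manifold for every $\theta \in \mathcal{C}$, and $f_h$ is compatible with the identity in $\mathcal{C}$ means $E_{f_h,id,\theta}$ is a differentiable manifold for every $\theta \in \mathcal{C}$. By the set equality above these two families coincide, so the two conditions are literally the same statement.

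For part $(ii)$, under the standing assumption of compatibility in $\mathcal{C}$ (so that each $E_{f,h,\theta} = E_{f_h,id,\theta}$ with $\theta \in \mathcal{C}$ is a genuine $C^\ell$ submanifold and transversality has meaning), I would again unwind Definition~\ref{defi_dh}: reading ``$f_h$ is $d$-regular relative to $\mathcal{C}$'' as ``$f_h$ is $d_{id}$-regular relative to $\mathcal{C}$'', the statement ``$f$ is $d_h$-regular relative to $\mathcal{C}$ in $\B_\e^n$'' asserts that for every $0 < \e' \le \e$ and every $\theta \in \mathcal{C}$ the sphere $\BS_{\e'}^{n-1}$ meets $E_{f,h,\theta}$ transversally whenever the intersection is non-empty, and the statement ``$f_h$ is $d$-regular relative to $\mathcal{C}$ in $\B_\e^n$'' is the same assertion with $E_{f_h,id,\theta}$ in place of $E_{f,h,\theta}$. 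Since the two families of sets agree, so do the transversality conditions, and the equivalence follows. I do not expect a genuine obstacle here: the proposition is a translation device whose entire content is Remark~\ref{remark_eq}, and its purpose is to let one replace assertions about $d_h$-regularity of $f$ by assertions about ordinary $d$-regularity of the conic modification $f_h$, so that the machinery of Section~\ref{section_2} applies to $f_h$.
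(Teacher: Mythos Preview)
Your proposal is correct and follows exactly the paper's approach: the paper derives the proposition as an immediate consequence of Remark~\ref{remark_eq}, and you do the same, with the added (and welcome) verification that $f_h$ is genuinely $C^\ell$ near each $E_{f,h,\theta}$ since $h^{-1}$ is $C^\ell$ away from the origin.
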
 

Hence the definition of $d_h$-regularity above generalizes the definition of $d$-regularity, in the sense that a $C^\ell$-map with linear discriminant (as in section 2) is 
$d$-regular if and only if it is $d_{id}$-regular relative to $\mathcal{C}:= \BS_\eta^{k-1} \setminus \mathcal{A}_\eta$.

\begin{example} \label{ex_6}
Let $f\colon (\R^3,0) \to (\R^2,0)$ be the real analytic map given by
\[
 f(x,y,z):= (x^2z+y^3, x) \, .
\]
As pointed out in Example 3.9 of \cite{Cisneros-Seade-Snoussi:MFCdRAMG}, the map $f$ has an isolated critical value, but along the line $L:= \{x-z=0 \ , \ y=0 \}$, the spaces $E_{f,id,\theta}$ 
are not transversal to the corresponding spheres. So $f$ is not $d$-regular.

Now consider the conic homeomorphisms $h(u,v) = (u,v^3)$ and its inverse $h^{-1}(u,v) = (u,\sqrt[3]{v})$ of Example \ref{ex_2}. By Remark \ref{remark_obs} we have that $f$ is compatible with both $h$ 
and $h^{-1}$.

Let us show that $f$ is $d_{(h^{-1})}$-regular: 
\[
 f_{(h^{-1})}(x,y,z) = (x^2z+y^3, x^3) \, ,
\]
it is an analytic map, whose critical set is the plane $\{x=0\}$, and its discriminant is the line $\{v=0\}$ in $\R^2$.
Using \cite[Proposition~3.8]{Cisneros-Seade-Snoussi:MFCdRAMG} one can check that $f_{(h^{-1})}$ is $d$-regular and hence $f$ is $d_{(h^{-1})}$-regular.
\end{example}

This leads us to the following: 

\begin{question}
Is there always a conic modification of a real analytic map $f$ with isolated critical value, which is $d$-regular?
\end{question}

\medskip
\section{The \texorpdfstring{$d_h$}{dh}-regularity and fibration theorems}  
\label{section_5}

In this section we study the possibility to modify the discriminant of a map by homeomorphism, making it linear. Then one can check if the new map is $d$-regular, and in that case we establish a 
fibration theorem.
This process applies in particular to maps with isolated critical value and the transversality property. 

Let $f\colon (\R^n,0) \to (\R^k,0)$, with $n \geq k \geq 2$, be a map of class $C^\ell$. Let $\e_0$ be a Milnor radius for $f$, let $0<\e<\e_0$
and let $\Delta_\e$ be the discriminant of $f_\e$.

\begin{defi} \label{defi_linea}
We say that a conic homeomorphism $h\colon \B_\eta^k \to \mathcal{B}_\eta^k$ is a linearization for $f_\e$ if we have that
\[
 h^{-1}(\Delta_\e \cap \mathcal{B}_\eta^k) = \Cone(h^{-1}(\Delta_\e \cap \partial \mathcal{B}_\eta^k)) \, .
\]
\end{defi}

Note that the definition above includes the case where $f_\e$ has an isolated critical value.

Recall the notation of Section~\ref{section_3}. Given a linearization $h$ for a map $f$ of class $C^\ell$, we have the following paths of class $C^\ell$
\[
 \varphi_{h,\theta} = h(\LL_\theta)  \, 
\]
and the induced map
\[
 \xi_{h}\colon \B_{\eta}^k \setminus \{0\} \to \BS_{\eta}^{k-1} \, ,
\]
where $\xi_{h}(x)$ is the point where the closure of the path $\varphi_{h,\theta}$ that contains $x$ intersects $\BS_{\eta}^{k-1}$. So $\varphi_{h,\theta} = (\xi_h)^{-1}(\theta)$. 

Also, given a linearization $h$ for $f_\e$, we set
\[
 \mathcal{A}_{h,\eta}:= h^{-1}(\Delta_\e \cap \partial \mathcal{B}_\eta^k) = h^{-1}(\Delta_\e) \cap \BS_\eta^{k-1} \, .
\]

Finally, for each $\theta$ in $\BS_{\eta}^{k-1}$ we have
\[
 E_{f,h,\theta} := f^{-1}(\varphi_{h,\theta}) \, ,
\]
which is a manifold of class $C^\ell$ if $\theta \notin \mathcal{A}_{h,\eta}$. Recall that
\[
 E_{f,h,\theta} = (\xi_{h} \circ f)^{-1}(\theta) = f^{-1}(h(\LL_\theta)) \, .
\]
So any linearization $h$ for a map $f_\e$ of class $C^\ell$ is compatible with $f_\e$ in $\BS_{\eta}^{k-1} \setminus \mathcal{A}_{h,\eta}$.

Once again, let $\pi\colon \BS_\eta^{k-1}\to\BS^{k-1}$ be the projection onto the unit sphere $\BS^{k-1}$ and set $\mathcal{A}_h=\pi(\mathcal{A}_{h,\eta})$. Recall the map of class $C^\ell$
\[
 \phi_{f,h}\colon \BS_\e^{n-1} \setminus K_f \to \BS^{k-1} 
\]
given by
\[
 (\phi_{f,h})(x) = \frac{f_h(x)}{\|f_h(x)\|}   \, .
\]

As before, using Proposition \ref{lemma_4} together with Theorem~\ref{prop_2} and Theo\-rem~\ref{theo_a}, we have:

\begin{theo} \label{prop_5b}
Let $f\colon (\R^n,0) \to (\R^k,0)$ be a map of class $C^\ell$ with the transversality property in the ball $\B_\e^n$, 
and suppose it admits a linearization $h\colon \B_\eta^k \to \mathcal{B}_\eta^k$ making $f$ 
$d_h$-regular in the ball $\B_\e^n$. Then the restriction
\[
 (\phi_{f,h})|\colon  \BS_\e^{n-1} \setminus f^{-1}(\Delta_f) \to \BS^{k-1} \setminus \mathcal{A}_h
\]
is a $C^\ell$-locally trivial fibration over its image. 
\end{theo}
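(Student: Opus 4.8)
The plan is to reduce Theorem~\ref{prop_5b} to the already-established Theorem~\ref{prop_2} (and Theorem~\ref{theo_a} in the analytic case) by passing to the conic modification $f_h = h^{-1}\circ f$. The key observation is that $h$ being a linearization for $f_\e$ means precisely that the discriminant of $f_h$ is linear: since $h^{-1}$ is a submersion outside the origin (property $(iii)$ of a conic homeomorphism), the critical set of $f_h$ equals the critical set of $f$, and the discriminant of $f_h$ is $h^{-1}(\Delta_\e)$, which by Definition~\ref{defi_linea} satisfies $h^{-1}(\Delta_\e\cap\mathcal{B}_\eta^k) = \Cone(h^{-1}(\Delta_\e\cap\partial\mathcal{B}_\eta^k))$, i.e.\ it is a union of line-segments from the origin; thus $f_h$ has linear discriminant in $\B_\e^n$ with $\mathcal{A}_{h,\eta} = h^{-1}(\Delta_\e)\cap\BS_\eta^{k-1}$ playing the role of $\mathcal{A}_\eta$.

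First I would verify the three hypotheses of Theorem~\ref{prop_2} for $f_h$. The linear-discriminant hypothesis was just dealt with. For the transversality property: $f$ has it in $\B_\e^n$ by assumption, and since the fibres of $f_h$ coincide with the fibres of $f$ (because $h^{-1}$ is a diffeomorphism outside $0$, so $f_h^{-1}(t) = f^{-1}(h(t))$), transversality of $f^{-1}(y)$ with $\BS_\e^{n-1}$ for $y$ near $0$ off the discriminant translates directly into transversality of the fibres of $f_h$ with $\BS_\e^{n-1}$; one just shrinks the target ball if necessary so that it lands inside $\mathcal{B}_\eta^k$, which is arranged by the condition $f(\B_{\e_0}^n)\subset\mathring{\mathcal{B}}_\eta^k$ implicit in the $d_h$-regularity setup. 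Finally, $d$-regularity of $f_h$ is exactly the content of Proposition~\ref{lemma_4}$(ii)$: $f$ being $d_h$-regular in $\B_\e^n$ is equivalent to $f_h$ being $d$-regular in $\B_\e^n$ (relative to $\mathcal{C} = \BS_\eta^{k-1}\setminus\mathcal{A}_{h,\eta}$, which is all of $\BS_\eta^{k-1}$ off the discriminant, as required since $h$ is compatible with $f$ there by the remark following Definition~\ref{defi_linea}).

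With these three hypotheses checked, Theorem~\ref{prop_2} applied to $f_h$ yields that the restriction of $\Phi_{f_h}(x) = f_h(x)/\|f_h(x)\|$ gives a $C^\ell$-locally trivial fibration
\[
 \BS_\e^{n-1}\setminus f_h^{-1}(\Delta(f_h)) \to \BS^{k-1}\setminus\mathcal{A}_h
\]
over its image, where $\Delta(f_h) = h^{-1}(\Delta_\e)$ and $\mathcal{A}_h = \pi(\mathcal{A}_{h,\eta})$. Now I would translate this statement back in terms of $f$: by construction $\phi_{f,h}(x) = f_h(x)/\|f_h(x)\| = \Phi_{f_h}(x)$, and $f_h^{-1}(\Delta(f_h)) = f^{-1}(h(h^{-1}(\Delta_\e))) = f^{-1}(\Delta_\e)$, which is exactly the set $f^{-1}(\Delta_f)$ appearing in the statement. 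Hence the map $(\phi_{f,h})|\colon \BS_\e^{n-1}\setminus f^{-1}(\Delta_f)\to\BS^{k-1}\setminus\mathcal{A}_h$ is a $C^\ell$-locally trivial fibration over its image, as claimed. (In the analytic case, combining with Theorem~\ref{theo_a} applied to $f_h$ additionally identifies this with the tube fibration pushed through $\xi_h$, but that is not needed for the present statement.)

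The step I expect to require the most care is the bookkeeping around the radii and the compatibility of domains: one must be sure that the linearity radius $\eta$ for $f_h$, the target ball radius $\delta$ in the transversality property, and the Milnor radius $\e_0$ can be chosen coherently so that $f(\B_{\e_0}^n)\subset\mathring{\mathcal{B}}_\eta^k$ and so that $h$ (a priori only defined on $\B_\eta^k$) is genuinely defined wherever $f$ takes values — this is the analogue of Remark~\ref{rem:delta=eta}. Everything else is a formal consequence of Proposition~\ref{lemma_4} and Theorem~\ref{prop_2}; no new geometric input is needed beyond recognizing that linearization of the discriminant of $f$ is the same thing as the conic modification $f_h$ having a linear discriminant.
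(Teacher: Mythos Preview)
Your proposal is correct and follows exactly the approach the paper takes: the paper's own proof consists of the single sentence ``As before, using Proposition~\ref{lemma_4} together with Theorem~\ref{prop_2} and Theorem~\ref{theo_a}, we have [the statement],'' and your write-up is a careful unpacking of precisely that reduction. Your attention to the coherence of the radii $\eta$, $\delta$, $\e_0$ is the only point where care beyond formal citation is needed, and you have identified it correctly.
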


If $f$ is analytic, by Theorem~\ref{theo_a} we have the following corollary.

\begin{cor}
In the case that $f$ is real analytic and with the transversality property in the ball $\B_\e^n$, the fibration given in Theorem~\ref{prop_5b}
is equivalent to the locally trivial fibration \eqref{eq:MT} on the tube:
\begin{equation*}
 f_h|\colon \B_\e^n \cap f^{-1}(\BS_\delta^{k-1} \setminus \Delta_\e) \to \BS_\delta^{k-1} \setminus \Delta_\e.
\end{equation*}
\end{cor}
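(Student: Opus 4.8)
The plan is to deduce the corollary from Theorem~\ref{prop_5b} together with the equivalence of fibrations established in Theorem~\ref{theo_a}, by reducing the statement about $f$ and its linearization $h$ to a statement about the conic modification $f_h = h^{-1}\circ f$, which by construction has \emph{linear} discriminant. First I would record the setup: since $h$ is a linearization for $f_\e$, the preimage $h^{-1}(\Delta_\e\cap\mathcal{B}_\eta^k) = \Cone\big(h^{-1}(\Delta_\e\cap\partial\mathcal{B}_\eta^k)\big)$ is a union of line segments emanating from $0$, so $\Delta_{f_h,\e} = h^{-1}(\Delta_\e)$ is linear in the sense of Section~\ref{section_2}, with $\mathcal{A}_{h,\eta} = h^{-1}(\Delta_\e)\cap\BS_\eta^{k-1}$ playing the role of $\mathcal{A}_\eta$. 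By Proposition~\ref{lemma_4}(ii), $f$ being $d_h$-regular in $\B_\e^n$ is equivalent to $f_h$ being $d$-regular in $\B_\e^n$. So $f_h$ is an analytic map with linear discriminant which is $d$-regular in $\B_\e^n$.

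The one point that needs genuine care is the transversality property: Theorem~\ref{theo_a} requires $f_h$ (not just $f$) to have the transversality property in $\B_\e^n$. Here I would argue that since $h^{-1}$ is a $C^\ell$ submersion away from the origin (condition $(iii)$ of a conic homeomorphism), the critical sets satisfy $\Sigma_{f_h} = \Sigma_f$ away from $V(f)$, and a fibre $f_h^{-1}(y) = f^{-1}(h(y))$ is just a fibre of $f$; hence the fibres of $f_h$ over $\B_\delta^k\setminus\Delta_{f_h,\e}$ are precisely the fibres of $f$ over $h(\B_\delta^k)\setminus\Delta_\e$, and these meet $\BS_\e^{n-1}$ transversally because $f$ has the transversality property in $\B_\e^n$ (shrinking $\delta$ if necessary so that $h(\B_\delta^k)$ sits inside the ball witnessing transversality for $f$). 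Thus $f_h$ inherits the transversality property. I expect this bookkeeping — matching up the $\delta$'s and checking that passing through the homeomorphism $h$ on the target does not destroy transversality of fibres with the sphere — to be the main obstacle, though it is really just a careful unwinding of definitions rather than a deep difficulty.

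With $f_h$ analytic, with linear discriminant, with the transversality property, and $d$-regular in $\B_\e^n$, Theorem~\ref{theo_a} applies directly to $f_h$: it gives an equivalence of fibre bundles between the tube fibration $\tilde{f_h} = \pi_\delta\circ (f_h)_\e|\colon \B_\e^n\cap f_h^{-1}(\BS_\delta^{k-1}\setminus\mathcal{A}_\delta)\to\BS^{k-1}\setminus\mathcal{A}_h$ and the sphere fibration $\phi\colon \BS_\e^{n-1}\setminus f_h^{-1}(\Delta_{f_h,\e})\to\BS^{k-1}\setminus\mathcal{A}_h$. The latter is exactly the fibration $(\phi_{f,h})|$ of Theorem~\ref{prop_5b}, since $\phi_{f,h} = f_h/\|f_h\|$ and $f_h^{-1}(\Delta_{f_h,\e}) = f^{-1}(h(\Delta_{f_h,\e})) = f^{-1}(\Delta_\e) = f^{-1}(\Delta_f)$. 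It remains only to identify the tube fibration of $f_h$ with the tube fibration \eqref{eq:MT} of $f$. For this I would note that $f_h^{-1}(\BS_\delta^{k-1}\setminus\Delta_{f_h,\e}) = f^{-1}\big(h(\BS_\delta^{k-1})\setminus\Delta_\e\big)$, and that composing the Ehresmann fibration \eqref{eq:MT} of $f$ over $\BS_{\delta'}^{k-1}\setminus\Delta_\e$ (for suitable $\delta'$) with the diffeomorphism $h$ from $\B_\eta^k\setminus\{0\}$ restricted appropriately, and then with the radial projection, produces a fibration equivalent to $\tilde{f_h}$; hence by transitivity of equivalence of fibre bundles the sphere fibration $(\phi_{f,h})|$ is equivalent to \eqref{eq:MT}. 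I would then simply remark that the statement of the corollary follows, citing Theorem~\ref{theo_a} and Theorem~\ref{prop_5b}.
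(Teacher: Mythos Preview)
Your approach is essentially the paper's own: the paper simply states ``If $f$ is analytic, by Theorem~\ref{theo_a} we have the following corollary,'' and you have correctly unpacked what this means --- pass to the conic modification $f_h$, observe it has linear discriminant and is $d$-regular (Proposition~\ref{lemma_4}), check it inherits the transversality property because its fibres coincide with fibres of $f$, and then invoke Theorem~\ref{theo_a}.

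One slip to flag: you write that $f_h$ is \emph{analytic}, but this is not true in general. The conic homeomorphism $h$ only has $h^{-1}$ of class $C^\ell$ away from the origin (condition~(ii) of the definition), so $f_h = h^{-1}\circ f$ is only $C^\ell$ on $\B_\e^n\setminus V$. Fortunately this does not break your argument, since Theorem~\ref{theo_a} is stated for $C^\ell$ maps, and all the relevant constructions (the vector field $\tilde{w}$ of Theorem~\ref{lem:uni.con.str.0}, the fibrations, the $E_\theta$) live on $\B_\e^n\setminus W \subset \B_\e^n\setminus V$ where $f_h$ is indeed $C^\ell$. Just replace ``analytic'' by ``of class $C^\ell$'' in your write-up.
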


\medskip
\subsection{Isolated critical value with transversality property}

Let $f\colon (\R^n,0) \to (\R^k,0)$ be a map of class $C^\ell$, with isolated critical value. Let $\e_0$ be a Milnor radius for $f$, let $0<\e<\e_0$ and suppose $f$ has the transversality property 
in the ball $\B_e^n$.  
In this case, one can use Milnor's vector field (\cite[Theorem~11.2]{Milnor:SPCH}) to give a fibration on the sphere, but without knowing what is the projection map. 
Since maps with linear discriminant include the case of isolated cri\-tical value, we can apply Theorem~\ref{prop_5b} to this case getting a sharper result:

\begin{theo} \label{prop_5}
Let $f\colon (\R^n,0) \to (\R^k,0)$ be a map of class $C^\ell$ with an isolated critical value and the transversality property in the ball $\B_e^n$. If it is $d_h$-regular for some conic homeomorphism $h\colon 
\B_\eta^k \to \mathcal{B}_\eta^k$ of class $C^\ell$, then the map 
\[
 \phi_{f,h} := (\pi \circ f_h)|\colon \BS_\e^{n-1} \setminus f^{-1}(0) \to \BS^{k-1} \, ,
\]
is a $C^\ell$-locally trivial fibration over its image.
\end{theo}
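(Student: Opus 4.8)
The plan is to reduce Theorem~\ref{prop_5} to Theorem~\ref{prop_5b}, which is already established, by verifying that the hypotheses of the latter are met in the special case of an isolated critical value. First I would observe that when $f$ has an isolated critical value, the discriminant is $\Delta_\e = \{0\}$, so the linearization condition of Definition~\ref{defi_linea} reads $h^{-1}(\{0\}) = \Cone(h^{-1}(\{0\}\cap\partial\mathcal B_\eta^k)) = \Cone(\emptyset) = \{0\}$, which holds automatically for any conic homeomorphism $h$ (since $h(0)=0$). Thus every conic homeomorphism is trivially a linearization for $f_\e$ in this case, and $\mathcal A_{h,\eta} = h^{-1}(\{0\})\cap\BS_\eta^{k-1} = \emptyset$, so $\mathcal A_h = \emptyset$ and $f$ is compatible with $h$ on all of $\BS_\eta^{k-1}$ by Remark~\ref{remark_obs}.

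Next I would feed these observations into Theorem~\ref{prop_5b}. The hypotheses there are: $f$ of class $C^\ell$, the transversality property in $\B_\e^n$, and the existence of a linearization $h$ making $f$ $d_h$-regular in $\B_\e^n$. The first two are assumed outright in the statement of Theorem~\ref{prop_5}; the third follows from the $d_h$-regularity hypothesis combined with the fact just noted that $h$ is automatically a linearization. Theorem~\ref{prop_5b} then yields that the restriction
\[
 (\phi_{f,h})|\colon \BS_\e^{n-1}\setminus f^{-1}(\Delta_f) \to \BS^{k-1}\setminus\mathcal A_h
\]
is a $C^\ell$-locally trivial fibration over its image. Substituting $\Delta_f = \{0\}$ and $\mathcal A_h = \emptyset$ gives precisely
\[
 \phi_{f,h} = (\pi\circ f_h)|\colon \BS_\e^{n-1}\setminus f^{-1}(0) \to \BS^{k-1},
\]
which is the claimed conclusion. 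I would also remark that $\phi_{f,h}(x) = f_h(x)/\|f_h(x)\|$ coincides with $\pi\circ f_h$ after identifying $\BS^{k-1}$ with the unit sphere, so the two descriptions of the projection agree.

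The only genuine point requiring a word of care — and the step I expect to be the mildest obstacle — is bookkeeping about radii: one must make sure that the conic homeomorphism $h\colon\B_\eta^k\to\mathcal B_\eta^k$ can be chosen with $\eta$ large enough that $f(\B_{\e_0}^n)\subset\mathring{\mathcal B}_\eta^k$, as demanded in Definition~\ref{defi_dh}, and simultaneously that $\delta\ll\e$ from the transversality property can be taken below $\eta$; since all these are constraints of the form ``shrink $\e$ and $\delta$,'' they are compatible and impose no real restriction. With that settled, the theorem is an immediate specialization of Theorem~\ref{prop_5b}, so the proof is essentially a one-line invocation once the triviality of the linearization and compatibility conditions for an isolated critical value is recorded.
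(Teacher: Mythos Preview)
Your proposal is correct and follows essentially the same route as the paper, which simply states that Theorem~\ref{prop_5} ``is an immediate corollary of Theorem~\ref{prop_5b}, together with the fact that the map $f_h := h^{-1} \circ f$ induces a submersion of class $C^\ell$ outside $f^{-1}(0)$.'' Your argument spells out in more detail why $\Delta_\e=\{0\}$ forces any conic homeomorphism to be a linearization with $\mathcal A_h=\emptyset$, which is exactly the content needed to invoke Theorem~\ref{prop_5b}; the paper's extra remark about $f_h$ being a submersion (which follows since $h^{-1}$ is a submersion away from $0$ and $f$ has no critical values other than $0$) is implicit in your reduction and could be added for completeness.
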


It is an immediate corollary of Theorem~\ref{prop_5b}, together with the fact that
the map $f_h := h^{-1} \circ f$ induces a submersion of class $C^\ell$ outside $f^{-1}(0)$.

\medskip
In the case of $f$ analytic with isolated critical value and transversality property in the ball $\B_\e^n$, we can now say more on equivalence of fibrations. By Theorem~\ref{prop_5} and Theorem~\ref{theo_a} the two fibrations:
\begin{equation} 
\phi_{f,h}\colon \BS_\e^{n-1} \setminus K_f \to \BS^{k-1} 
\end{equation}
and
\begin{equation}\label{eq:FMT.ICV}
f_h\colon \B_\e^n \cap f_h^{-1}(\BS_{\delta'}^{k-1}) \to \BS_{\delta'}^{k-1}  \end{equation}
are equivalent. This last one is clearly equivalent to the fiber bundle
\begin{equation} \label{eq:fib}
 f\colon \B_\e^n \cap f^{-1}(\mathcal{S}_{\delta'}^{k-1}) \to \mathcal{S}_{\delta'}^{k-1}  
\end{equation}

Let us show that fibration (\ref{eq:fib}) is equivalent to the fibration on the tube:
\begin{equation} \label{eq:fib2}
 f\colon \B_\e^n \cap f^{-1}(\BS_{\delta''}^{k-1}) \to \BS_{\delta''}^{k-1} 
\end{equation}
over any sphere sufficiently small such that $\BS_{\delta''}^{k-1}\subset\B_\delta^k$. This is true even when $f$ is of class $C^\ell$.

\begin{prop}\label{prop:inv.hom}
Let $f\colon (\R^n,0) \to (\R^k,0)$ be a map of class $C^\ell$ with an isolated critical value and the transversality property in the ball $\B_\e^n$.
Consider the fibration \eqref{eq:MST} on the solid tube 
\begin{equation}
 f|\colon \B_\e^n \cap f^{-1}(\mathring{\B}_\delta^k\setminus\{0\}) \to (\mathring{\B}_\delta^k\setminus\{0\}) \label{eq:SMT}.
\end{equation}
We have that $\pi_{k-1}(\mathring{\B}_\delta^k\setminus\{0\})\cong\Z$. Let $\tilde{g},\tilde{h}\colon \BS^{k-1}\to \mathring{\B}_\delta^k\setminus\{0\}$ be two continuous embeddings such that both 
represent the generator of  $\pi_{k-1}(\mathring{\B}_\delta^k\setminus\{0\})$. Then the restrictions
\begin{align}
f|\colon \B_\e^n \cap f^{-1}\bigl(g(\BS^{k-1})\bigr) &\to g(\BS^{k-1})\, ,\label{eq:fib.g}\\
\intertext{and}
f|\colon \B_\e^n \cap f^{-1}\bigl(h(\BS^{k-1})\bigr) &\to h(\BS^{k-1})\label{eq:fib.h}
\end{align}
are topologically equivalent fibre bundles.
\end{prop}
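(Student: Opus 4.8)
The plan is to reduce the statement to the following topological fact: two embeddings of $\BS^{k-1}$ into the punctured ball $\mathring{\B}_\delta^k\setminus\{0\}$ that represent the same generator of $\pi_{k-1}(\mathring{\B}_\delta^k\setminus\{0\})\cong\Z$ are ambient isotopic inside $\mathring{\B}_\delta^k\setminus\{0\}$ (at least after replacing them, if necessary, by the radial spheres they are isotopic to), and then to transport this ambient isotopy to the total space using the local triviality of the fibration \eqref{eq:SMT} on the solid tube.

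First I would observe that it suffices to treat the case where one of the two embeddings is the standard inclusion of a small round sphere $\BS^{k-1}_{\delta''}$ with $\delta''\ll\delta$. Indeed, the statement of the proposition is symmetric and transitive in $\tilde g$ and $\tilde h$ (topological equivalence of the pulled-back bundles is an equivalence relation), so if each of \eqref{eq:fib.g}, \eqref{eq:fib.h} is shown equivalent to $f|\colon \B_\e^n\cap f^{-1}(\BS^{k-1}_{\delta''})\to\BS^{k-1}_{\delta''}$, we are done. Next, given an embedded sphere $\tilde g(\BS^{k-1})\subset\mathring{\B}_\delta^k\setminus\{0\}$ representing the generator, I would produce an ambient isotopy $H_s\colon\mathring{\B}_\delta^k\setminus\{0\}\to\mathring{\B}_\delta^k\setminus\{0\}$, $s\in[0,1]$, with $H_0=\mathrm{id}$ and $H_1(\tilde g(\BS^{k-1}))=\BS^{k-1}_{\delta''}$; for $k\geq 3$ this is a standard consequence of the fact that $\tilde g(\BS^{k-1})$ is a separating sphere homologous to the generator (one can even take both embeddings to be smooth, or pass to the PL/topological category and use that $\mathring{\B}_\delta^k\setminus\{0\}\simeq \BS^{k-1}\times(0,\delta)$, isotoping so that the image is a graph over $\BS^{k-1}$ and then straightening), and for $k=2$ it is the classical Schoenflies/annulus argument. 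We may also arrange $H_s$ to be the identity outside a compact collar away from $0$.

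The key step is then the covering-isotopy argument. Since \eqref{eq:SMT} is a locally trivial fibration and $H_s$ is an isotopy of the base staying inside $\mathring{\B}_\delta^k\setminus\{0\}$, the isotopy lifts: there is a fibre-preserving ambient isotopy $\widetilde H_s$ of $\B_\e^n\cap f^{-1}(\mathring{\B}_\delta^k\setminus\{0\})$ covering $H_s$, i.e. $f\circ\widetilde H_s = H_s\circ f$, with $\widetilde H_0=\mathrm{id}$. Here one uses the standard isotopy-lifting property of locally trivial fibrations over a paracompact base; because $H_s$ is supported in a compact region, $\widetilde H_s$ can be taken with compact support as well, so no properness issue arises. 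Restricting $\widetilde H_1$ to $f^{-1}(\tilde g(\BS^{k-1}))$ and using $f\circ\widetilde H_1=H_1\circ f$ gives a homeomorphism
\[
 f^{-1}(\tilde g(\BS^{k-1}))\;\xrightarrow{\ \widetilde H_1\ }\; f^{-1}(\BS^{k-1}_{\delta''})
\]
commuting with $f$ up to the homeomorphism $H_1\colon\tilde g(\BS^{k-1})\to\BS^{k-1}_{\delta''}$ of the bases, which is exactly an equivalence of fibre bundles between \eqref{eq:fib.g} and $f|\colon\B_\e^n\cap f^{-1}(\BS^{k-1}_{\delta''})\to\BS^{k-1}_{\delta''}$. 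The same applied to $\tilde h$ and composition of equivalences finishes the proof.

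I expect the main obstacle to be the isotopy step in the base, i.e. showing that any embedding $\tilde g$ representing the generator of $\pi_{k-1}$ can be ambiently isotoped inside $\mathring{\B}_\delta^k\setminus\{0\}$ to a round sphere. For $k\geq 3$ this is where the hypothesis that $\tilde g$ represents the \emph{generator} (rather than an arbitrary degree) is genuinely used, and in the purely continuous (topological) category one must invoke either the topological Schoenflies theorem or the local contractibility of homeomorphism groups of manifolds to promote the homotopy statement to an ambient isotopy; care is also needed that the isotopy never touches $0$, which we guarantee by keeping its support in a compact annular neighbourhood of $\tilde g(\BS^{k-1})$. Everything after that — the isotopy lifting and the bookkeeping of the induced bundle equivalence — is routine and uses only the local triviality asserted in \eqref{eq:SMT}.
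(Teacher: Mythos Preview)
Your route works in spirit but is far more elaborate than what is needed, and the step you correctly flag as ``the main obstacle'' is in fact a genuine gap in the generality stated. The proposition allows $\tilde g,\tilde h$ to be arbitrary \emph{continuous} embeddings; for $k=3$ one can take $\tilde g(\BS^{2})$ to be an Alexander horned sphere sitting inside the punctured ball. Such a sphere is \emph{not} ambient isotopic to a round sphere in $\mathring{\B}_\delta^3\setminus\{0\}$ (one complementary component has nontrivial fundamental group), so no isotopy $H_s$ of the base as you describe exists, and the covering-isotopy argument cannot get started. The topological Schoenflies theorem you invoke requires local flatness, which is not assumed. So as written your argument only covers tame embeddings, which admittedly suffices for all the applications in the paper, but not for the proposition as stated.

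The paper sidesteps this entirely with a one-line argument: for locally trivial fibrations over paracompact spaces, pull-backs along homotopic maps are equivalent bundles (Husemoller, Chapter~4). Since $\tilde g$ and $\tilde h$ both represent the generator of $\pi_{k-1}(\mathring{\B}_\delta^k\setminus\{0\})$ they are homotopic as maps $\BS^{k-1}\to\mathring{\B}_\delta^k\setminus\{0\}$, so $\tilde g^*(f|)\cong\tilde h^*(f|)$ over $\BS^{k-1}$; and because $\tilde g,\tilde h$ are embeddings these pull-backs are canonically identified with the restrictions \eqref{eq:fib.g} and \eqref{eq:fib.h}. No isotopy, no Schoenflies, no lifting --- only the homotopy class of the maps matters, which is exactly the content of the hypothesis. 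Your isotopy-lifting machinery is essentially reproving a special case of this homotopy invariance by hand, at the cost of needing a much stronger (and here unavailable) geometric input.
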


\begin{proof}
For paracompact spaces locally trivial fibrations are homotopy invariant, that is, pull-backs over homotopic maps give equivalent fibre bundles (see for instance \cite[\S4 
Theorem~9.8]{Husemoller:Bundles} plus a reduction to the principal bundle case). Since $\tilde{g}$ and $\tilde{h}$ are homotopic, the pull-backs of the fibre bundle \eqref{eq:SMT} by $\tilde{g}$ and 
$\tilde{h}$ are equivalent. Since $\tilde{g}$ and $\tilde{h}$ are embeddings those pull-backs are equivalent to the restrictions \eqref{eq:fib.g} and \eqref{eq:fib.h}.
\end{proof}

\begin{cor}
 The fibre bundles \eqref{eq:fib} and \eqref{eq:fib2} are equivalent.
\end{cor}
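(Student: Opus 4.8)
The plan is to realize both \eqref{eq:fib} and \eqref{eq:fib2} as restrictions of the solid-tube fibration \eqref{eq:SMT} over embedded topological $(k-1)$-spheres sitting in the punctured ball $\mathring{\B}_\delta^k\setminus\{0\}$, and then to invoke the homotopy-invariance Proposition~\ref{prop:inv.hom}. Since $f$ has an isolated critical value and the transversality property in $\B_\e^n$, the extended discriminant reduces to $\{0\}$, so fibration \eqref{eq:MST} is precisely the solid-tube fibration \eqref{eq:SMT} over $\mathring{\B}_\delta^k\setminus\{0\}$. Shrinking $\delta'$ and $\delta''$ if necessary, we may assume that both $\mathcal{S}_{\delta'}^{k-1}=h(\BS_{\delta'}^{k-1})$ and the round sphere $\BS_{\delta''}^{k-1}$ are contained in $\mathring{\B}_\delta^k\setminus\{0\}$; then \eqref{eq:fib} and \eqref{eq:fib2} are exactly the restrictions of \eqref{eq:SMT} over these two spheres.

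The key point is that each of $\mathcal{S}_{\delta'}^{k-1}$ and $\BS_{\delta''}^{k-1}$ represents the generator of $\pi_{k-1}(\mathring{\B}_\delta^k\setminus\{0\})\cong\Z$. For the round sphere this is classical, and moreover all round spheres of radius $<\delta$ are mutually homotopic in $\mathring{\B}_\delta^k\setminus\{0\}$ via the radial homotopy, so the particular value of $\delta''$ is immaterial. For $\mathcal{S}_{\delta'}^{k-1}$ one uses that $h$ is a conic homeomorphism, hence in particular a homeomorphism of $(\R^k,0)$ onto itself: thus $h(\B_{\delta'}^k)$ is a compact topological $k$-ball with $0$ in its interior, and its boundary $\mathcal{S}_{\delta'}^{k-1}$ --- an embedded, possibly wild, topological sphere --- bounds, by the Jordan--Brouwer separation theorem, a region of $\R^k$ containing $0$, so it is homologous in $\mathring{\B}_\delta^k\setminus\{0\}$ to a small round sphere about the origin; since for $k\ge 2$ the space $\mathring{\B}_\delta^k\setminus\{0\}$ is homotopy equivalent to $\BS^{k-1}$, the Hurewicz isomorphism upgrades this to the statement that $\mathcal{S}_{\delta'}^{k-1}$ represents the generator of $\pi_{k-1}$. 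Equivalently, $h$ restricts to a homeomorphism $\mathring{\B}_{\delta'}^k\setminus\{0\}\to h(\mathring{\B}_{\delta'}^k)\setminus\{0\}$ carrying $\BS_{\delta'}^{k-1}$ to $\mathcal{S}_{\delta'}^{k-1}$, and both punctured open balls include $\pi_{k-1}$-isomorphically into $\mathring{\B}_\delta^k\setminus\{0\}$.

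With this in hand, one chooses continuous parametrizations $\tilde g,\tilde h\colon\BS^{k-1}\to\mathring{\B}_\delta^k\setminus\{0\}$ of $\mathcal{S}_{\delta'}^{k-1}$ and $\BS_{\delta''}^{k-1}$ --- for instance $\tilde h(\theta)=\delta''\theta$ and $\tilde g$ given by $h$ composed with the rescaling $\BS^{k-1}\cong\BS_{\delta'}^{k-1}$ --- arranged so that the two represent the same generator, precomposing one of them with a reflection of $\BS^{k-1}$ if needed (this does not change the equivalence class of the restricted fibre bundle). Proposition~\ref{prop:inv.hom} then gives that the restrictions of \eqref{eq:SMT} over $\tilde g(\BS^{k-1})=\mathcal{S}_{\delta'}^{k-1}$ and over $\tilde h(\BS^{k-1})=\BS_{\delta''}^{k-1}$ are topologically equivalent fibre bundles; these are exactly \eqref{eq:fib} and \eqref{eq:fib2}, so we are done. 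The main obstacle is precisely the middle step: controlling the possibly wild topological sphere $\mathcal{S}_{\delta'}^{k-1}$ and confirming that it links the deleted origin exactly once --- the homeomorphism property of $h$ (it can neither collapse nor unlink a small sphere about the fixed point $0$), together with Jordan--Brouwer separation, is what makes this work, the round-sphere case being routine.
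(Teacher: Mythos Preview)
Your proof is correct and follows the same route as the paper's: choose explicit embeddings $\tilde g,\tilde h\colon\BS^{k-1}\to\mathring{\B}_\delta^k\setminus\{0\}$ with images $\mathcal{S}_{\delta'}^{k-1}$ and $\BS_{\delta''}^{k-1}$, check they represent the generator of $\pi_{k-1}$, and invoke Proposition~\ref{prop:inv.hom}. The paper's own proof is two lines and simply writes down $\tilde g=\pi_{\delta''}$ and $\tilde h=h\circ\pi_{\delta'}$ without further justification; you supply the missing verification that $\mathcal{S}_{\delta'}^{k-1}$ actually represents the generator, which the paper leaves implicit. One minor remark: your worry about wildness is unnecessary here, since by the definition of conic homeomorphism $h^{-1}$ is a $C^\ell$ submersion (hence local diffeomorphism) away from the origin, so $h$ is $C^\ell$ there too and $\mathcal{S}_{\delta'}^{k-1}$ is a $C^\ell$ submanifold; but your Jordan--Brouwer/Hurewicz argument works regardless and is a clean way to see it.
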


\begin{proof}
 Take $\tilde{g}$ as the projection of the unit sphere onto the sphere $\BS_{\delta''}^{k-1}$ and $\tilde{h}=h\circ\pi_{\delta'}$ where $\pi_{\delta'}$ is the projection of the unit sphere onto the 
sphere $\BS_{\delta''}^{k-1}$. Then $\tilde{g}$ and $\tilde{h}$ satisfy the conditions of Proposition~\ref{prop:inv.hom}.
\end{proof}

In particular, we have:

\begin{cor} \label{cor_7}
For $f$ real analytic, the fiber bundle $\phi_{f,h}$ of Proposition~\ref{prop_5} does not depend on the choice of the conic homeomorphism $h$ (as long as $f$ is $d_h$-regular), up to topological equivalence of fiber 
bundles.
\end{cor}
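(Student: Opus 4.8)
The plan is to deduce Corollary~\ref{cor_7} directly from the chain of equivalences established just before it, observing that all intermediate fibre bundles in that chain are independent of the choice of conic homeomorphism $h$. First I would recall the setup: $f$ is real analytic with isolated critical value and the transversality property in $\B_\e^n$, and $h$ is a conic homeomorphism of class $C^\ell$ making $f$ $d_h$-regular (such $h$ exists by hypothesis, but we do not assume it is unique). By Theorem~\ref{prop_5} together with Theorem~\ref{theo_a}, the fibre bundle $\phi_{f,h}\colon \BS_\e^{n-1}\setminus f^{-1}(0)\to\BS^{k-1}$ is equivalent to the fibre bundle \eqref{eq:FMT.ICV}, which in turn is equivalent to \eqref{eq:fib}, namely $f\colon \B_\e^n\cap f^{-1}(\mathcal{S}_{\delta'}^{k-1})\to\mathcal{S}_{\delta'}^{k-1}$. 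The point is that $\mathcal{S}_{\delta'}^{k-1}=h(\BS_{\delta'}^{k-1})$ is an embedded $(k-1)$-sphere in a small punctured ball $\mathring{\B}_\delta^k\setminus\{0\}$, and by the preceding Corollary (via Proposition~\ref{prop:inv.hom}) fibration \eqref{eq:fib} is equivalent to fibration \eqref{eq:fib2}, $f\colon \B_\e^n\cap f^{-1}(\BS_{\delta''}^{k-1})\to\BS_{\delta''}^{k-1}$, over a genuine small round sphere $\BS_{\delta''}^{k-1}\subset\B_\delta^k$.

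The key step is that the final object $f\colon \B_\e^n\cap f^{-1}(\BS_{\delta''}^{k-1})\to\BS_{\delta''}^{k-1}$ is manifestly independent of $h$: it is just the Milnor--L\^e tube fibration \eqref{eq:MT} of $f$ over a small round sphere, which exists by Proposition~\ref{prop:MLF} and, thanks to the transversality property, does not depend on $\e$ either. So given two conic homeomorphisms $h_1$ and $h_2$, each making $f$ $d_{h_i}$-regular, I would run the above chain of equivalences for each, obtaining
\[
 \phi_{f,h_1}\ \sim\ \eqref{eq:fib2}\ \sim\ \phi_{f,h_2},
\]
where $\sim$ denotes topological equivalence of fibre bundles and the middle term is one and the same fibration (one may have to shrink $\delta''$ so that a single small round sphere works for both; since $0<\delta''\ll\delta_i$ is only constrained to lie in the respective $\B_{\delta_i}^k$, choosing $\delta''$ below both thresholds is harmless, and Proposition~\ref{prop:inv.hom} shows the tube fibration over any two such spheres is equivalent anyway). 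Transitivity of topological equivalence of fibre bundles then gives $\phi_{f,h_1}\sim\phi_{f,h_2}$, which is exactly the assertion.

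The only genuine subtlety — and the step I would be most careful about — is making sure the ``homotopy invariance'' input is applied to objects living in the same space. In Proposition~\ref{prop:inv.hom} the two embeddings $\tilde g,\tilde h\colon \BS^{k-1}\to\mathring{\B}_\delta^k\setminus\{0\}$ must land in a common punctured ball over which fibration \eqref{eq:SMT} is defined; since $\mathcal{S}_{\delta'}^{k-1}=h(\BS_{\delta'}^{k-1})$ can be taken inside an arbitrarily small such ball (by choosing $\delta'$ small, using that $h$ is a conic homeomorphism fixing the origin), and since $\tilde g$ and $\tilde h=h\circ\pi_{\delta'}$ are both embeddings representing the generator of $\pi_{k-1}(\mathring{\B}_\delta^k\setminus\{0\})\cong\Z$, the hypotheses of Proposition~\ref{prop:inv.hom} are met. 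Everything else is a formal concatenation of equivalences already proved, so the corollary follows.

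\begin{proof}
Let $h_1\colon \B_{\eta_1}^k\to\mathcal{B}_{\eta_1}^k$ and $h_2\colon \B_{\eta_2}^k\to\mathcal{B}_{\eta_2}^k$ be conic homeomorphisms, each of class $C^\ell$, such that $f$ is $d_{h_i}$-regular in the ball $\B_\e^n$ for $i=1,2$. By Theorem~\ref{prop_5} each
\[
 \phi_{f,h_i}\colon \BS_\e^{n-1}\setminus f^{-1}(0)\to\BS^{k-1}
\]
is a $C^\ell$-locally trivial fibration, and by Theorem~\ref{theo_a} (applied to the conic modification $f_{h_i}$, which has linear discriminant $\{0\}$, the transversality property, and is $d$-regular by Proposition~\ref{lemma_4}) it is equivalent to the tube fibration
\[
 f_{h_i}\colon \B_\e^n\cap f_{h_i}^{-1}(\BS_{\delta_i'}^{k-1})\to\BS_{\delta_i'}^{k-1},
\]
which is equivalent to the fibre bundle
\[
 f\colon \B_\e^n\cap f^{-1}(\mathcal{S}_{\delta_i'}^{k-1})\to\mathcal{S}_{\delta_i'}^{k-1},\qquad \mathcal{S}_{\delta_i'}^{k-1}=h_i(\BS_{\delta_i'}^{k-1}).
\]
Choose $\delta''>0$ with $0<\delta''\ll\delta$ small enough that $\BS_{\delta''}^{k-1}\subset\B_\delta^k$ and, for $i=1,2$, that the embedded sphere $\mathcal{S}_{\delta_i'}^{k-1}$ and the round sphere $\BS_{\delta''}^{k-1}$ both lie in a common punctured ball $\mathring{\B}_\delta^k\setminus\{0\}$ over which the solid tube fibration \eqref{eq:SMT} of $f$ is defined; this is possible because $h_i$ is a conic homeomorphism fixing the origin, so $\mathcal{S}_{\delta_i'}^{k-1}$ shrinks to $0$ as $\delta_i'\to0$. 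For each $i$, both the inclusion $\BS_{\delta''}^{k-1}\hookrightarrow\mathring{\B}_\delta^k\setminus\{0\}$ and the embedding $h_i\circ\pi_{\delta''}\colon \BS^{k-1}\to\mathcal{S}_{\delta_i'}^{k-1}\subset\mathring{\B}_\delta^k\setminus\{0\}$ (with $\pi_{\delta''}$ the radial projection of the unit sphere onto $\BS_{\delta''}^{k-1}$) represent the generator of $\pi_{k-1}(\mathring{\B}_\delta^k\setminus\{0\})\cong\Z$. By Proposition~\ref{prop:inv.hom} the fibre bundles
\[
 f\colon \B_\e^n\cap f^{-1}(\mathcal{S}_{\delta_i'}^{k-1})\to\mathcal{S}_{\delta_i'}^{k-1}
 \qquad\text{and}\qquad
 f\colon \B_\e^n\cap f^{-1}(\BS_{\delta''}^{k-1})\to\BS_{\delta''}^{k-1}
\]
are topologically equivalent, for $i=1,2$. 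The latter is the Milnor--L\^e tube fibration \eqref{eq:MT} of $f$ over the round sphere $\BS_{\delta''}^{k-1}$, which does not depend on $h_1$ or $h_2$. Concatenating the equivalences above and using transitivity of topological equivalence of fibre bundles, we obtain
\[
 \phi_{f,h_1}\ \sim\ \bigl(f\colon \B_\e^n\cap f^{-1}(\BS_{\delta''}^{k-1})\to\BS_{\delta''}^{k-1}\bigr)\ \sim\ \phi_{f,h_2}.
\]
Hence the fibre bundle $\phi_{f,h}$ of Proposition~\ref{prop_5} does not depend, up to topological equivalence, on the choice of the conic homeomorphism $h$ for which $f$ is $d_h$-regular.
\end{proof}
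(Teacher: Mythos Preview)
Your proposal is correct and follows essentially the same approach as the paper: the corollary is presented there as an immediate consequence (``In particular, we have'') of the preceding chain of equivalences showing that $\phi_{f,h}$ is equivalent to the Milnor--L\^e tube fibration \eqref{eq:fib2} over a round sphere, which is manifestly independent of $h$. You have simply made that chain explicit for two choices $h_1,h_2$ and invoked transitivity, which is exactly the intended argument.
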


This means that we are not going to find different fibrations on the sphere by looking for different conic homeomorphisms $h$ such that $f$ is $d_h$-regular.

\begin{example}
Consider the real analytic map $f\colon (\R^3,0) \to (\R^2,0)$ given by
\[
 f(x,y,z):= (x+y, x^2+y^2+z^3) \, .
\]
Its critical locus $\Sigma_f$ is the line $\{x=y, z=0\}$ and its discriminant $\Delta_f$ is the parabola $\{v= \frac{u^2}{2} \}$.

Since $\Sigma_f(\mathring{\B}_\e) \cap V_f = \{0\}$, it follows from Propositions \ref{prop_surj} and \ref{prop_tp} that $f$ is surjective and has the transversality property.

The germ of conic homeomorphism $h\colon (\R^2,0) \to (\R^2,0)$ given by
\begin{align*}
 h(u,v)&= 
\begin{cases}
(\sqrt{u}, \frac{v}{2}), &\text{if $u \geq 0$,}\\
(-\sqrt{-u}, \frac{v}{2}), &\text{if $u < 0$,} \\
\end{cases} 
\intertext{whose inverse is given by }
h^{-1}(u,v)&= 
\begin{cases}
(u^2, 2v), &\text{if $u \geq 0$,}\\
(-u^2, 2v), &\text{if $u < 0$,}\\
\end{cases}
\end{align*}
is compatible with $f$ in
\[
 \mathcal{C} := \BS_\eta^1 \setminus \{ \theta_{\pi/4}, \theta_{3\pi/4} \} \, ,
\]
and $h^{-1}$ sends the parabola $\Delta_f = \{v= \frac{u^2}{2} \}$ to the set $L:= \LL_{\pi/4} \cup \LL_{3\pi/4} \cup \{0\}$. So $h$ is a linearization for $f$.

Moreover, the conic modification $f_h:= h^{-1} \circ f$,  given by
\[
 f_h(x,y,z) := 
\begin{cases}
\big( (x+y)^2  ,  2(x^2+y^2+z^3) \big),&\text{if $x+y \geq 0$,} \\
\big( -(x+y)^2  ,  2(x^2+y^2+z^3) \big),& \text{if $x+y < 0$.} 
\end{cases} 
\]
is $d$-regular relative to $\mathcal{C}$. Therefore $f$ is $d_h$-regular. 

Hence, since $f^{-1}(\Delta_f) = \{ (x-y)^2+2z^3=0 \}$, it follows by Theorem~\ref{theo_a} that the map
\[
 \phi_{f,h} = \frac{f_h}{\| f_h \|} \colon \BS_\e^2 \setminus \{ (x-y)^2+2z^3=0 \}  \to \BS_1^1 \setminus \{ \theta_{\pi/4}, \theta_{3\pi/4} \}
\]
is a topological locally trivial fibration (equivalent to the fibration in the tube associated to $f_h$).
\end{example}

\medskip
The following example generalizes Example \ref{ex_sg}:

\begin{example} \label{ex_sgg}
Let $(f,g)\colon \R^n \to \R^2$ be a real analytic map of the form
\[
 (f,g) = \left( \sum_{i=1}^n a_i x_i^p \, , \, \sum_{i=1}^n b_i x_i^q \right) \, ,
\]
with $p,q \geq 2$ integers, such that the points $\lambda_i = (a_i,b_i)$ satisfy the Weak Hyperbolicity Hypothesis (that is, no two of them are linearly dependent). Example \ref{ex_sg} concerns the 
case $p=q$. 

An easy calculation shows that the critical set $\Sigma$ of $(f,g)$ is linear and that the discriminant $\Delta$ is the union of the parametrized curves $(a_i t^p, b_i t^q)$ in $\R^2$ (with $t \in 
\R$), for each $i=1, \dots, n$. So it follows from Proposition \ref{prop_tp} that $(f,g)$ has the transversality property. 

Moreover, if $V(f,g) := (f,g)^{-1}(0)$ has dimension bigger than zero, it follows from Propositions \ref{prop_surj} that $(f,g)$ is surjective. If $V(f,g) = \{0\}$, then $(f,g)$ may be not 
surjective. Recall from Example \ref{ex_sg} that if $p=q$, then $V(f,g) = \{0\}$ if and only if the origin is not in the convex hull of the points $\lambda_i = (a_i,b_i)$.

We must consider four cases:

\begin{itemize}
\item[$(1)$] Suppose that both $p$ and $q$ are odd:

\medskip
\noindent Consider the germ of conic homeomorphism $h(u,v) = (u^{1/q}, v^{1/p})$,
whose inverse is given by $h^{-1}(u,v) = (u^q, v^p)$.
It is a linearization for the map $(f,g)$, since for each $i=1, \dots, n$ we have
\[
 h^{-1}(a_i t^p, b_i t^q) = \big( a_i^q t^{pq}, b_i^p t^{pq}  \big)  \, .
\]

\noindent Let us show that $(f,g)$ is $d_h$-regular. The conic modification $(f,g)_h := h^{-1} \circ (f,g)$ is given by
\[
 (f,g)_h = \left( \big( \sum_{i=1}^n a_i x_i^p \big)^q \, , \, \big( \sum_{i=1}^n b_i x_i^q \big)^p \right) \, .
\]
So each $E_{(f,g)_h, id, \theta}$ has one of the following forms:
\[
 \{f^q + \a g^p =0\} \cap \{g > 0\}
\]
or
\[
 \{f^q + \a g^p =0\} \cap \{g < 0\}
\]
or
\[
 \{g=0\} \cap \{f > 0\}
\]
or
\[
 \{g=0\} \cap \{f < 0\} \, ,
\]
for some $\a \in \R$. The first one corresponds to $\theta \in (0, \pi/2]$ if $\a \leq 0$ and to $\theta \in (\pi/2, \pi)$ if $\a>0$. The second one corresponds to $\theta \in (\pi, 3\pi/2]$ if $\a 
\leq 0$ and to $\theta \in (3\pi/2, 2\pi)$ if $\a>0$. The third one corresponds to $\theta = 0$ and the forth one corresponds to $\theta = \pi$.

Since all the equalities above deal with homogeneous polynomials, it is easy to see that each $E_{(f,g)_h, id, \theta}$ intersects any sphere in $\R^n$ transversally.

\medskip
\item[$(2)$] Suppose that $p$ is even and $q$ is odd:

\medskip
\noindent Consider the germ of conic homeomorphism $h: (\R^2,0) \to (\R^2,0)$ given by
\begin{align*}
h(u,v)&= 
\begin{cases}
(u^{1/q}, v^{1/p}), & \text{if $v \geq 0$,}\\
(u^{1/q}, -(-v)^{1/p}), &\text{if $v < 0$,}\\
\end{cases} 
\intertext{whose inverse is given by:}
h^{-1}(u,v)&= 
\begin{cases}
(u^q, v^p), & \text{if $v \geq 0$,}\\
(u^q, -v^p), & \text{if $v < 0$.} \\
\end{cases} 
\end{align*}
It is a linearization for $(f,g)$, since for each $i=1, \dots, n$ we have: 
\begin{align*}
h^{-1}(a_i t^p, b_i t^q) &=
\begin{cases}
\big( a_i^q t^{pq}  ,  b_i^p t^{pq}  \big), & \text{if $t \geq 0$,}\\
\big( a_i^q t^{pq}  ,  -b_i^p t^{pq}  \big), & \text{if $t < 0$,}\\
\end{cases}\quad \text{if $b_i \geq 0$,}\\
\intertext{and}
h^{-1}(a_i t^p, b_i t^q) &=
\begin{cases}
\big( a_i^q t^{pq}  ,  -b_i^p t^{pq}  \big), & \text{if $t \geq 0$,}\\
\big( a_i^q t^{pq}  ,  b_i^p t^{pq}  \big),  & \text{if $t < 0$.}\\
\end{cases}\quad\text{if $b_i < 0$.} 
\end{align*}

\noindent The conic modification $(f,g)_h $ is given by
\[
 (f,g)_h (x) = 
\begin{cases}
\big( f(x)^q, g(x)^p \big), & \text{if $g(x) \geq 0$,}\\
\big( f(x)^q, -g(x)^p \big), & \text{if $g(x) < 0$.} \\
\end{cases} 
\]

\noindent Proceeding as in Case $(1)$, one can see that $(f,g)$ is $d_h$-regular.

\medskip
\item[$(3)$] The case when $p$ is odd and $q$ is even is analogous, considering the linearization given by
\[
 h(u,v)= 
\begin{cases}
(u^{1/q}, v^{1/p}), & \text{if $u \geq 0$,}\\
(-(-u)^{1/q}, v^{1/p}), & \text{if $u < 0$.} \\
\end{cases} 
\]

\medskip
\item[$(4)$] The case when both $p$ and $q$ are even is also analogous, considering the linearization given by
\[
h(u,v)= 
\begin{cases}
(u^{1/q}, v^{1/p}), & \text{if $u \geq 0$ and  $v \geq 0$,}\\
(-(-u)^{1/q}, v^{1/p}), &\text{if $u < 0$ and $v \geq 0$,}\\
(u^{1/q}, -(-v)^{1/p}), &\text{if $u \geq 0$ and $v < 0$,} \\
(-(-u)^{1/q}, -(-v)^{1/p}), & \text{if $u < 0$ and $v < 0$,}  \\
\end{cases} 
\]
\end{itemize}
\end{example}

So we have proved:

\begin{theo}
Let $(f,g)\colon \R^n \to \R^2$ be a real analytic map of the form
\[
 (f,g) = \left( \sum_{i=1}^n a_i x_i^p \, , \, \sum_{i=1}^n b_i x_i^q \right) \, ,
\]
where $a_i, b_i \in \R$ are constants in generic position and $p,q \geq 2$ are integers, as in Example \ref{ex_sgg}. Then there exist $\e$ and $\delta$ sufficiently small, with $0<\delta \ll \e$, such 
that the restriction
\[
 (f,g)|\colon (f,g)^{-1}(\D_\delta^2 \setminus \Delta) \cap \B_\e^n \to \D_\delta^2 \setminus \Delta
\]
is a smooth locally trivial fibration over its image, where the discriminant $\Delta$ of $(f,g)$ is given by the parametrized curves $t \mapsto (a_i t^p, b_i t^q)$ in $\R^2$, for each $i=1, \dots, 
n$. 

Moreover, we have an equivalent smooth locally trivial fibration
\[
 \phi\colon \BS_\e^{n-1} \setminus \left( (f,g)^{-1}(\Delta) \cap \BS_\e^{n-1} \right) \to (\BS_{1}^1 \setminus \mathcal{A}_h) \cap \Ima(\phi)
\]
where
\[
 \phi = \frac{h^{-1} \circ f}{\|h^{-1} \circ f\|}
\]
for the corresponding homeomorphism $h$ as in Example \ref{ex_sgg}, and $\mathcal{A}_h := h^{-1}(\Delta) \cap \BS_1^1$.

If $V(f,g) \neq \{0\}$, then the maps above are surjective. 
\end{theo}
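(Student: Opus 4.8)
The plan is to assemble the final theorem directly from the machinery developed in the preceding sections, treating it as a corollary of Theorem~\ref{prop_5b} (and Theorem~\ref{theo_a} for the equivalence with the tube fibration), together with the explicit case analysis of Example~\ref{ex_sgg}. First I would verify the hypotheses of Theorem~\ref{prop_5b} for the map $(f,g)$. Since the critical set $\Sigma$ is the union of the coordinate axes of $\R^n$ (a routine gradient computation: a point is critical iff the two gradient vectors $p\sum a_i x_i^{p-1}e_i$ and $q\sum b_i x_i^{q-1}e_i$ are linearly dependent, which under the Weak Hyperbolicity Hypothesis forces all but one coordinate to vanish), one has $\Sigma \cap V(f,g) \subset \{0\}$; hence by Proposition~\ref{prop_tp} the map $(f,g)$ has the transversality property in a small ball $\B_\e^n$, and by Proposition~\ref{prop_surj} it is surjective whenever $V(f,g)\neq\{0\}$. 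The discriminant is then $(f,g)(\Sigma)$, which is exactly the union of the parametrized curves $t\mapsto(a_i t^p, b_i t^q)$.

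Next I would invoke Example~\ref{ex_sgg}: in each of the four parity cases for $(p,q)$ an explicit conic homeomorphism $h$ is produced which is a linearization for $(f,g)$ (the verification that $h^{-1}$ carries each discriminant curve to a ray is the displayed computation $h^{-1}(a_i t^p, b_i t^q) = (\pm a_i^q t^{pq}, \pm b_i^p t^{pq})$) and which makes $(f,g)$ $d_h$-regular relative to $\mathcal{C} = \BS_\eta^1 \setminus \mathcal{A}_{h,\eta}$, because after the modification every $E_{(f,g)_h,id,\theta}$ is cut out by homogeneous equations and inequalities and so is transverse to all spheres centred at the origin. With the transversality property and $d_h$-regularity in hand, Theorem~\ref{prop_5b} immediately yields that
\[
\phi = \frac{h^{-1}\circ(f,g)}{\|h^{-1}\circ(f,g)\|}\colon \BS_\e^{n-1}\setminus (f,g)^{-1}(\Delta) \to \BS_1^1\setminus\mathcal{A}_h
\]
is a $C^\ell$-locally trivial fibration over its image; since $(f,g)$ is real analytic this fibration is smooth.

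For the first displayed fibration in the statement, the one on the tube $(f,g)^{-1}(\D_\delta^2\setminus\Delta)\cap\B_\e^n \to \D_\delta^2\setminus\Delta$, I would appeal to Proposition~\ref{prop:MLF} (the fibration on the tube, which holds in full generality, with $\hat\Delta_\e = \Delta_\e$ thanks to the transversality property by Theorem~\ref{prop:tr.pr}), choosing $\delta$ as the linearity radius as in Remark~\ref{rem:delta=eta}. Then the equivalence of the tube fibration with the sphere fibration $\phi$ is precisely the content of Theorem~\ref{theo_a} applied to the conic modification $(f,g)_h$ (using Proposition~\ref{lemma_4} to pass between $d_h$-regularity of $(f,g)$ and $d$-regularity of $(f,g)_h$), combined with the radial projection $\pi_\delta$. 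Finally, surjectivity of both maps when $V(f,g)\neq\{0\}$ follows from Proposition~\ref{prop_surj} as noted above, which identifies the image in $\BS_1^1\setminus\mathcal{A}_h$ with all of it.

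The main obstacle I anticipate is bookkeeping rather than a genuine mathematical difficulty: one must be careful that the four parity cases are handled uniformly, that the linearity radius $\eta$, the tube radius $\delta$, and the Milnor radius $\e_0$ are chosen compatibly (in particular so that $(f,g)(\B_{\e_0}^n)\subset\mathring{\mathcal{B}}_\eta^k$, as required by Definition~\ref{defi_dh}), and that the homeomorphism $h$ really is a \emph{conic} homeomorphism of class $C^\ell$ in the sense required — i.e.\ that $h^{-1}$ is a submersion off the origin, which is where the hypothesis $p,q\geq 2$ is used. Everything else is a direct citation of Theorem~\ref{prop_5b}, Theorem~\ref{theo_a}, and the computations already carried out in Example~\ref{ex_sgg}, so the proof reduces essentially to the sentence ``this follows from Theorem~\ref{prop_5b} and Theorem~\ref{theo_a} applied to the linearizations constructed in Example~\ref{ex_sgg}''.
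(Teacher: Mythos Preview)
Your proposal is correct and follows essentially the same approach as the paper: the theorem is stated there immediately after Example~\ref{ex_sgg} with the phrase ``So we have proved'', meaning it is regarded as a direct consequence of the computations in that example (critical set, discriminant, construction of the linearization $h$ in the four parity cases, verification of $d_h$-regularity via homogeneity) together with Propositions~\ref{prop_surj} and~\ref{prop_tp} for surjectivity and the transversality property, and then Theorem~\ref{prop_5b} with its analytic corollary (equivalently Theorem~\ref{theo_a}) for the fibrations and their equivalence. Your write-up is in fact more explicit about the logical dependencies than the paper itself.
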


\subsection{An example of a non-analytic \texorpdfstring{$d_h$}{dh}-regular map}\label{ssec:non-anal}

Consider the real function $\varsigma\colon \R \to \R_+$ given by:
\[
 \varsigma(t) := 
\begin{cases}
e^{-1/t} & \textrm{if $t > 0$}; \\
0 & \textrm{if $t \leq 0$}. \\
\end{cases}
\]
It is a classic example of a function that is smooth and non-analytic.

\medskip
Now define $\alpha\colon\R^n\to\R$ by $\alpha(x)=1-\|x-\bar{1}\|^2$ where $\bar{1} := (1, 0, \dots, 0)$.

So the function $f: \R^n \to \R_+$ given by $f(x) := \varsigma(\alpha(x))$, is smooth and non-analytic at the origin.

\medskip
Notice that: 
\begin{itemize}
\item $f(x) = 0$ if and only if $\| x - \bar{1} \| \geq 1$;
\item $f(x) = t$ for some $t>0$ if and only if $t \leq e^{-1}$ and 
\[
 \| x - \bar{1} \|^2 = \frac{1}{\ln t} + 1.
\]
\end{itemize}

\medskip
So we have that:
\begin{enumerate}[(i)]
\item $\Ima(f) = [0, e^{-1}]$;
\item $V(f) = \R^n \setminus \mathring{\B}^n(\bar{1};1)$, where $\mathring{\B}^n(\bar{1};1)$ is the open ball of radius $1$ around the point $\bar{1}$;
\item $f^{-1}(t) = \BS^{n-1} \left( \bar{1}; \sqrt{ \frac{1}{\ln t} + 1} \right)$, where $\BS^{n-1} \left( \bar{1}; \sqrt{ \frac{1}{\ln t} + 1} \right)$ is 
the $(n-1)$-sphere around $\bar{1}$ of radius $\sqrt{ \frac{1}{\ln t} + 1}$, for any $0 < t < e^{-1}$;\label{it:fibre}
\item $f^{-1}(e^{-1}) = \{ \bar{1} \}$.
\end{enumerate}

\medskip
The gradient of $\alpha$ is given by
\[
 \nabla\alpha(x)=-2(x-\bar{1}).
\]
The derivative of $\varsigma$ is given by
\[
 \varsigma'(t)=\begin{cases}
        \frac{1}{t^2}e^{-\frac{1}{t}} & t>0\\
	0 & t\leq 0.
       \end{cases}
\]

So by the chain rule, the gradient vector of $f$ at a point $x \in \R^n$ is given by:
\[
\nabla f(x) = \varsigma'(\alpha(x)\nabla\alpha(x)=\begin{cases}
			-\frac{2}{\alpha(x)^2}f(x)(x-\bar{1}), & \text{ if $\|x-\bar{1}\|<1$,}\\
			\bar{0}, & \text{ if $\|x-\bar{1}\|\geq1$,}
         \end{cases}
\]
where $\bar{0} := (0, 0, \dots, 0)$.

Hence the critical set and the discriminant of $f$ are given by:
\[
 \Sigma_f = V(f) \cup \{ \bar{1} \},\qquad \Delta_f = \{ 0, e^{-1} \} \, .
\]

\bigskip
Now we consider a function $g$ analogous to the function $f$. Define $\beta\colon\R^n\to\R$ by $\beta(x)= 4-\|x-\bar{2}\|^2$
where $\bar{2} := (2, 0, \dots, 0)$. Define $g: \R^n \to \R_+$  by $g(x) := \varsigma(\beta(x))$.

In this case we have:
\begin{enumerate}[(1)]
\item $\Ima(g) = [0, e^{-\frac{1}{4}}]$;
\item $V(g) = \R^2 \setminus \mathring{\B}^n(\bar{2};2)$;
\item $g^{-1}(t) = \BS^{n-1} \left( \bar{2}; \sqrt{ \frac{1}{\ln t} + 4} \right)$, for any $0 < t < e^{-\frac{1}{4}}$;
\item $g^{-1}(e^{-\frac{1}{4}}) = \{ \bar{2}\}$.
\end{enumerate}
Doing a computation analogous to that for $f$, we get that the critical set and the discriminant of $g$ are given by:
\[
 \Sigma_g = V(g) \cup \{ \bar{2} \},\qquad \Delta_g = \{ 0, e^{-\frac{1}{4}} \} \, .
\]

\medskip
Finally, set the map $\Psi\colon \R^n \to \R^2$ given by $\Psi := (f,g)$. It is a smooth map that is not analytic at the origin.

\medskip
We have that:

\begin{enumerate}[(a)]
\item $\Ima(\Psi) \subset [0, e^{-1}] \times [0, e^{-\frac{1}{4}}]$;
\item $V(\Psi) = V(g) = \R^n \setminus \mathring{\B}^n(\bar{2};2)$, since $V(g) \subset V(f)$;
\item $\Psi^{-1}(t_1,t_2) = \BS^{n-1} \left( \bar{1}; \sqrt{ \frac{1}{\ln t_1} + 1} \right) \cap \BS^{n-1} \left( \bar{2}; \sqrt{ \frac{1}{\ln t_2} + 4} \right)$, for any $t_1 \neq 0$ and $t_2 \neq 
0$. Notice that if the two spheres $f^{-1}(t_1)$ and $g^{-1}(t_2)$ are transverse, the intersection is either homeomorphic to a sphere $\BS^{n-2}$ or empty. If they are tangent, the intersection is a point;\label{it:t1t2}
\item $\Psi^{-1}(0,t_2) = \left( \R^n \setminus \mathring{\B}^n(\bar{1};1) \right) \cap \BS^{n-1} \left( \bar{2}; \sqrt{ \frac{1}{\ln t_2} + 4} \right)$, for any $t_2 \neq 0$. Notice that this is 
homeomorphic to a ball $\B^{n-1}$, except when $t_2=e^{-\frac{1}{4}}$ we get the point $\{\bar{2}\}$.\label{it:0t2}
\item $\Psi^{-1}(t_1,0) = \BS^{n-1} \left( \bar{1}; \sqrt{ \frac{1}{\ln t_1} + 1} \right) \cap \left( \R^n \setminus \mathring{\B}^n(\bar{2};2) \right)$, for any $t_1 \neq 0$. Notice that this is the empty set.\label{it:t10}
\end{enumerate}

The Jacobian matrix of $\Psi$ at a point $x=(x_1, \dots, x_n)$ is given by the following matrix
\[
 \begin{bmatrix}
-\frac{2}{\alpha(x)^2} f(x) (x_1 - 1) & -\frac{2}{\alpha(x)^2} f(x) x_2 & \dots & -\frac{2}{\alpha(x)^2} f(x) x_n \\
-\frac{2}{\beta(x)^2} g(x) (x_1 - 2) & -\frac{2}{\beta(x)^2} g(x) x_2 & \dots &-\frac{2}{\beta(x)^2} g(x) x_n \\
\end{bmatrix}.
\]
\medskip
So we have that the critical set and the discriminant are given by
\[
 \Sigma_\Psi = V(f) \cup \{ x_2 = \dots = x_n =0 \},\quad \Delta_\Psi = \{ (0,t_2)\,|\, 0\leq t_2\leq e^{-1/4} \} \cup \mathcal{C} \, ,
\]
where $\mathcal{C}$ is the curve in $\R^2$ given by:
\[
 \mathcal{C}(s) := 
\begin{cases}
\left( e^{-\frac{1}{s(2-s)}} , e^{-\frac{1}{s(4-s)}} \right) & \textrm{if $s > 0$}; \\
(0,0) & \textrm{if $s \leq 0$}. \\
\end{cases}
\]
In particular, $\Psi$ \textbf{does not have} linear discriminant (see Figure~\ref{fig:discri}).

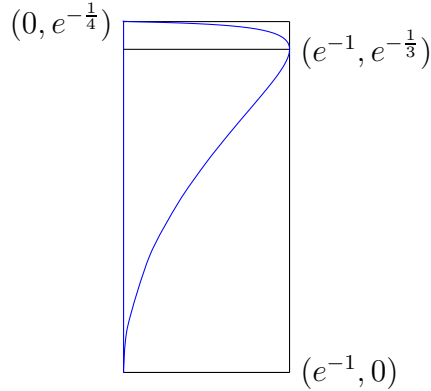
\begin{figure}[h]
\begin{tikzpicture}[scale=6,domain=0:4]
\draw[very thin,color=gray] (0,0) grid (0.3678,0.778);
\draw (0,0) -- (0.3678,0) node[right] {$(e^{-1},0)$};
\draw[color=blue] (0,0) -- (0,0.778) ;
\draw (0.3678,0) -- (0.3678,0.778);
\draw (0,0.778) node[left] {$(0,e^{-\frac{1}{4}})$} -- (0.3678,0.778);
\draw (0,0.7165)  -- (0.3678,0.7165) node[right] {$(e^{-1},e^{-\frac{1}{3}})$};
\draw[color=blue,domain=0.03:2,smooth]
plot[parametric,id=discriminant] function{exp(-1/(2*t-t*t)),exp(-1/(4*t-t*t))};
\end{tikzpicture}
\caption{The \textcolor{blue}{blue} closed curve is the discriminant.}
 \label{fig:discri}
\end{figure}


\medskip

\begin{remark}
The discriminant $\Delta_\Psi$ divides the plane in two connected components. Here we can see the phenomenon described in the Introduction: by \eqref{it:t1t2}, over points inside the discriminant, the fibres are spheres $\BS^{n-2}$, while over points outside the discriminant, the fibres are empty. 
\end{remark}

We claim that $\Psi$ has the transversality property, so there is no further contribution to the discriminant by critical points of $\Psi$ restricted to the sphere $\BS_\e^{n-1}$. Let $\B_\e$ the closed $n$-ball of small radius $\e>0$ centred at the origin $\bar{0}\in\R^n$. Consider the $(n-1)$-sphere  
$\BS^{n-1}(\bar{1};1)$ of radius $1$ centred at $\bar{1}$. Firstly, we want to find the equation of the $(n-2)$-sphere which is the intersection of the $(n-1)$-spheres $\BS^{n-1}_\e$ and 
$\BS^{n-1}(\bar{1};1)$ which respectively have the equations
\begin{align}
 x_1^2+x_2^2+\dots+x_n^2&=\e^2\label{eq:sp0}\\
 (x_1-1)^2+x_2^2+\dots+x_n^2&=1\label{eq:sp1}.
\end{align}
Getting $x_2^2$ from \eqref{eq:sp0} and substituting in \eqref{eq:sp1} we get that $x_1=\frac{\e^2}{2}$, so the intersection is the $(n-2)$-sphere with equation
\begin{equation}\label{eq:sps}
 x_2^2+\dots+x_n^2=\e^2-\frac{\e^4}{4}.
\end{equation}
Now we want to compute the radius $r$ of the $(n-1)$-sphere $\BS^{n-1}(\bar{2};r)$ with equation
\begin{equation}\label{eq:sp2}
 (x_1-2)^2+x_2^2+\dots+x_n^2=r_2^2
\end{equation}
which intersects the hyperplane $x_1=\frac{\e^2}{2}$ on the $(n-2)$-sphere given by \eqref{eq:sps}. Substituting $x_1=\frac{\e^2}{2}$ and \eqref{eq:sps} in \eqref{eq:sp2} we obtain that 
$r^2=4-\e^2$. The image of the $(n-1)$-sphere $\BS^{n-1}(\bar{2};r)$ under $g$ is $e^{-\frac{1}{r(4-r)}}$. Any $(n-1)$-sphere $\BS^{n-1}(\bar{2};r')$ of radius $r'>r>0$ intersects any $(n-1)$-sphere 
$\BS^{n-1}(\bar{1};r'')$ with $0<r''\leq1$ in either, an $(n-2)$-sphere \textbf{contained} in the interior of the $n$-ball $\B^n_\e$ or the empty set. Taking $\delta\leq e^{-\frac{1}{r(4-r)}}$ we 
get that the fibre
$\Psi^{-1}(t_1,t_2)$ with $(t_1,t_2)\in \B_\delta^k\setminus\Delta_\Psi$ is either, contained in the interior of the $n$-ball $\B^n_\e$ or the empty set, hence $\Psi$ has the transversality property.

\medskip
Consider the homeomorphism $h\colon (0,1)\times(0,1)\to(0,e^{-1})\times(0,e^{-\frac{1}{3}})$ given by 
\[
 h(u,v) := \left( e^{\frac{1}{u(u-2)}}, e^{\frac{1}{v(v-4)}} \right) \, ,
\]
with inverse
\[
 h^{-1}(u,v) := \left( 1- \sqrt{1 + \frac{1}{\ln u}} \ , \ 2- \sqrt{4 + \frac{1}{\ln v}} \right) \, .
\]
For $\eta<e^{-1}$ the restriction of $h$ to $\B_\eta^2\cap \bigl((0,1)\times(0,1)\bigr)$ is a conic homeomorphism that 
gives a linearization for $\Psi$, since $h$ takes the segment $\LL_\frac{\pi}{2}$ to itself and the segment $\LL_\frac{\pi}{4}$
onto the curve $\mathcal{C}$ (see the small rectangle in Figure~\ref{fig:discri}). 

\medskip
Set $E_\theta := (h^{-1} \circ \Psi)^{-1}(\mathcal{L}_\theta)$ for $\theta\in (0,\frac{\pi}{2}]$. For any $\theta\in(\pi/4, \pi/2)$, one can check that $E_\theta$ is a 
manifold homeomorphic to the cylinder $\BS^{n-2} \times (0,1)$ that intersects the sphere $\BS_{\e'}$ transversally, for any $\e' \leq \e$, with $\e$ small enough as above, and for $\theta\in[0,\pi/4)$ we have that $E_\theta$ is empty. Moreover: 
\[
 E_{\frac{\pi}{4}} = \{ x_2 = \dots = x_n =0 \}
\]
and $E_{\frac{\pi}{2}}$ is a manifold homeomorphic to a disk $\D^n$ that intersects the sphere $\BS_{\e'}$ transversally, for any $\e' \leq \e$. Hence $\Psi$ is $d_h$-regular.
Alternatively, one can check this by using Proposition 3.8 of [3] for the composition $h^{-1} \circ \Psi$.

\appendix

\section{Proof of Lemma~\ref{lem:FoF=F}}\label{app:EC}

In this section we use an extension of Ehresmann Fibration Theorem proved by Wolf in \cite{Wolf:DFEMCRM} to prove the following:

\begin{lemma}\label{lem:FoF=F}
Let $f\colon X\to Y$ and $g\colon Y\to Z$ be $C^\ell$-locally trivial fibrations with $1\leq l\leq\infty$, between smooth manifolds possibly with boundary. Then $g\circ f\colon X\to Z$ is a 
$C^\ell$-locally trivial fibration.
\end{lemma}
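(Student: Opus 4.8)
The plan is to localise on the base $Z$, trivialise $g$ there, and then build a $C^\ell$ product structure on the total space by flowing along radial segments, the crux being that these flows stay globally defined thanks to a compactness argument; Wolf's extension of Ehresmann's theorem is then what legitimises the differentiable bookkeeping and the boundary case. First I would record the two things that need no work: $p:=g\circ f$ is a $C^\ell$ submersion (in adapted charts it is a composition of projections, and compositions of surjective linear maps are surjective), and we may assume $f$, $g$, hence $p$, surjective after replacing each target with its open image and restricting. Since $C^\ell$-local triviality is a local property on the base, fix $z_0\in Z$; it then suffices to produce a $C^\ell$ trivialisation of $p$ over a neighbourhood of $z_0$.

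Next I would pick a trivialising neighbourhood $V\ni z_0$ for $g$ with $\overline V$ compact, identified by a $C^\ell$ chart with an open ball (a half-ball if $z_0\in\partial Z$) centred at the origin, with $z_0\leftrightarrow 0$; let $\Theta\colon g^{-1}(V)\xrightarrow{\sim}V\times F$ be the $C^\ell$ trivialisation of $g$, $F:=g^{-1}(z_0)$, so $\mathrm{pr}_V\circ\Theta=g$. The map $\tilde f:=\Theta\circ f|_{p^{-1}(V)}\colon p^{-1}(V)\to V\times F$ is again a $C^\ell$-locally trivial fibration, being the restriction of one post-composed with a $C^\ell$ diffeomorphism, and under it $\tilde f^{-1}(\{0\}\times F)=p^{-1}(z_0)$ while $\mathrm{pr}_V\circ\tilde f=p$. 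Fix a $C^\ell$-Ehresmann connection $\mathcal H$ for $\tilde f$ on $p^{-1}(V)$ (patch the flat connections of local trivialisations of $\tilde f$ with a smooth partition of unity on $V\times F$). For $v\in V$ write $\sigma_v(t):=tv$ for the radial segment; for $x\in p^{-1}(z_0)$ with $\tilde f(x)=(0,y)$, the $\mathcal H$-horizontal lift through $x$ of the curve $\hat\sigma_{v,y}(t):=(tv,y)$ in $V\times F$ projects under $\mathrm{pr}_V\circ\tilde f=p$ to $\sigma_v$, so it is a $p$-lift of $\sigma_v$ through $x$. Setting $\Xi(v,x):=[\text{value at }t=1\text{ of that lift}]$ yields $\Xi\colon V\times p^{-1}(z_0)\to p^{-1}(V)$ with $p\circ\Xi=\mathrm{pr}_V$, which is $C^\ell$ in $(v,x)$ by smooth dependence of solutions of the defining ODE on initial data and on the parameter $v$, and whose inverse is obtained by running the lifts backwards along $\sigma_v$; so $\Xi$ is a $C^\ell$ trivialisation of $p$ over $V$.

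The step I expect to be the main obstacle is exactly that this horizontal lift is defined on all of $[0,1]$: neither $f$ nor $p$ need be proper, so there is no automatic completeness and a priori the lift could escape to infinity in the fibre of $\tilde f$ in finite time. Here the two-step construction pays off: the image of $\hat\sigma_{v,y}$ lies in the \emph{compact} slice $[0,v]\times\{y\}\subset V\times F$, which is covered by finitely many trivialising charts of $\tilde f$; over each chart $\tilde f$ is a genuine product, so horizontal lifts exist as long as the base curve remains in the chart, and a Lebesgue-number/patching argument over the finite subcover continues the lift past every $t<1$ and hence to $t=1$. (Conceptually this is the differentiable incarnation of the fact that $V\times F$ deformation-retracts onto $\{0\}\times F=p^{-1}(z_0)$, so the fibration $\tilde f$ over $V\times F$ is induced from its restriction over $\{0\}\times F$, which gives $p^{-1}(V)\cong V\times p^{-1}(z_0)$.)

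Finally I would invoke the extension of Ehresmann's Fibration Theorem proved by Wolf in \cite{Wolf:DFEMCRM} to legitimise the ODE/flow manipulations when $z_0\in\partial Z$ (take $V$ a half-ball, so every radial segment points into $V$) and to guarantee that the trivialisations so produced are genuinely of class $C^\ell$ for submersions of manifolds with boundary; this is the only non-formal input. Assembling the local trivialisations obtained as $z_0$ ranges over $Z$ then exhibits $g\circ f$ as a $C^\ell$-locally trivial fibration.
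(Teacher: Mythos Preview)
Your strategy is sound and genuinely different from the paper's, but the compactness step contains a gap.

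The paper works globally via Wolf's characterisation. It chooses Ehresmann connections $\mathcal H^f$ and $\mathcal H^g$ for $f$ and $g$ (furnished by Wolf's (i)$\Rightarrow$(ii), so that all curves lift), and then uses the isomorphism $D_xf\colon H^f_x \xrightarrow{\sim} T_{f(x)}Y = V^g_{f(x)}\oplus H^g_{f(x)}$ to split $H^f_x = \tilde H^f_x \oplus H^{g\circ f}_x$; the distribution $\mathcal H^{g\circ f}=\{H^{g\circ f}_x\}$ is shown to be an Ehresmann connection for $g\circ f$. A curve $\alpha$ in $Z$ is lifted first to $Y$ via $\mathcal H^g$ and then to $X$ via $\mathcal H^f$, and one checks the result is $\mathcal H^{g\circ f}$-horizontal; Wolf's (ii)$\Rightarrow$(i) finishes. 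No trivialisation of $g$ is ever chosen, and one obtains for free a natural connection on the composite.

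Your route---trivialise $g$ over $V$, reduce to lifting radial curves in $V\times F$ along a connection for $\tilde f$, and read off a product structure for $p$ directly---is more concrete and closer to the standard topological argument. The gap is in the sentence ``over each chart $\tilde f$ is a genuine product, so horizontal lifts exist as long as the base curve remains in the chart'': this is true for the \emph{flat} connection of that trivialisation, not for your partition-of-unity patched $\mathcal H$. Already on the trivial bundle $\R\times\R\to\R$ the connection $\operatorname{span}(1,y^2)$ fails to lift $t\mapsto t$ on $[0,2]$ through $(0,1)$, so compactness of the base curve alone does not prevent escape in the fibre direction. The fix is immediate and in fact aligns you with the paper: since $\tilde f$ is itself a $C^\ell$-locally trivial fibration, Wolf's theorem---which you already invoke at the end---furnishes an Ehresmann connection for $\tilde f$ relative to which \emph{every} curve in $V\times F$ lifts; take $\mathcal H$ to be that one and your construction of $\Xi$ goes through unchanged.
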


Analogous results are given by Ekedahl \cite{Ekedahl:CBPBP} and McKay \cite[Corollary~7]{Mckay:MCC}.

We follow Section~2 of \cite{Wolf:DFEMCRM} to give the necessary definitions to enunciate Wolf's theorem. In \cite{Wolf:DFEMCRM} the results are stated for smooth manifolds and
smooth maps between them. Here we also deal with smooth ($C^\infty$) manifolds but the maps may be only of class $C^\ell$ for $1\leq l\leq\infty$.

Let $\varphi\colon E\to B$ be a submersion of class $C^\ell$ with $1\leq l\leq\infty$. Since $\varphi$ is a submersion, for any $b\in B$ the fibre
$\varphi^{-1}(b)$ is a submanifold of $E$ of dimension $\dim E-\dim B$.

Given $x\in E$, the \textit{vertical space} $V_x$ at $x$
is the subspace of $T_xE$ defined by
\[
 V_x=\set{v\in T_xE}{D_x\varphi(v)=0},
\]
that is, the  space tangent to the fibre
$\varphi^{-1}(\varphi(x))$. One has that $\dim V_x=\dim E-\dim B$.
The \textit{vertical distribution} is $\mathcal{V}=\{V_x\}_{x\in
E}$. An \textit{Ehresmann connection} for $\varphi$ is a
smooth distribution $\mathcal{H}=\{H_x\}_{x\in E}$ on $E$
that is complementary to $\mathcal{V}$, {\it i.e.},
$T_xE=V_x\oplus H_x$ for every $x\in E$. So $D_x\varphi$ restricts
to a linear isomorphism from $H_x$ onto $T_{\varphi(x)}B$. The
space $H_x$ is the \textit{horizontal space} at $x$. Notice that
using a Riemannian metric on $E$ it is always possible to
construct an Ehresmann connection taking the orthogonal complement
of the vertical distribution.

Fix an Ehresmann connection $\mathcal{H}$ of $\varphi\colon E\to
B$. A tangent vector $v\in T_x E$ is \textit{horizontal}
(respectively, \textit{vertical}) if $v\in H_x$ (respectively,
$v\in V_x$); a sectionally $C^\ell$-curve in $E$ with $1\leq l\leq\infty$ is
\textit{horizontal} (respectively, \textit{vertical}) if each of
its tangent vectors is \textit{horizontal} (respectively,
\textit{vertical}). We make the convention that all sectionally
$C^\ell$-curves are parametrised so as to be regular (nowhere
vanishing tangent vector) on each smooth arc.

Let $\alpha(t)$, $t\in[0,1]$, be a sectionally $C^\ell$-curve in
$\varphi(E)\subset B$. Given $x\in\varphi^{-1}(\alpha(0))$, there
is at most one sectionally $C^\ell$-\textit{horizontal curve}
$\alpha_x(t)$, $t\in[0,1]$, in $E$ such that:
\begin{enumerate}[(a)]
 \item $\alpha_x(0)=x$, and
\item $\varphi\circ\alpha_x=\alpha$.
\end{enumerate}
If it exists, $\alpha_x$ is called the \textit{horizontal lift} of
$\alpha$ to $x$. If $\alpha_x$ exists for every
$x\in\varphi^{-1}(\alpha(0))$, then we say that $\alpha$
\textit{has horizontal lifts}. In such case, the
\textit{translation of the fibres along $\alpha$} is the map
\begin{align*}
 \rho_\alpha\colon\varphi^{-1}(\alpha(0))&\to \varphi^{-1}(\alpha(1)),\\
x&\mapsto\alpha_x(1),
\end{align*}
which is differentiable by Lemma~2.2 of \cite{Wolf:DFEMCRM}.

\begin{theo}[{Corollary~2.5 of \cite{Wolf:DFEMCRM}}]\label{thm:EEFT}
Let $\varphi\colon E\to B$ be a submersion of class $C^\ell$ with $1\leq l\leq\infty$, where $E$ and $B$ are paracompact and $B$
connected.\footnote{The hypothesis in \cite{Wolf:DFEMCRM} of $E$ being
connected is not used in the proof and it works without it.} Then
the following statements are equivalent:
\begin{enumerate}[(i)]
 \item $\varphi\colon E\to\varphi(E)$ is a locally trivial differentiable fibre
 bundle.
\item There exists an Ehresmann connection for $\varphi$, relative
to which every sectionally $C^\ell$-curve in $\varphi(E)$ has
horizontal lifts. \item If $\mathcal{H}$ is an Ehresmann
connection for $\varphi$, then every sectionally $C^\ell$-curve in
$\varphi(E)$ has horizontal lifts relative to $\mathcal{H}$.
\end{enumerate}
\end{theo}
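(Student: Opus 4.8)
The plan is to establish the three-way equivalence by the cycle (iii) $\Rightarrow$ (ii) $\Rightarrow$ (i) $\Rightarrow$ (iii), with the parallel-transport step (ii) $\Rightarrow$ (i) carrying the bulk of the content. First I would dispose of (iii) $\Rightarrow$ (ii): since $\varphi$ is a submersion, at least one Ehresmann connection always exists, obtained by fixing a Riemannian metric on $E$ and letting $\mathcal{H}$ be the orthogonal complement of the vertical distribution $\mathcal{V}$, exactly as remarked just before the statement. Applying the hypothesis (iii) to this particular $\mathcal{H}$ immediately yields (ii).

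For (ii) $\Rightarrow$ (i), which is the heart of the argument, I would produce local trivializations by parallel transport. Because $\varphi$ is a submersion it is an open map, so $\varphi(E)$ is an open submanifold of $B$; I fix $b_0\in\varphi(E)$ and a chart identifying a neighbourhood $U$ of $b_0$ with a convex set. For each $b\in U$ let $\gamma_b$ be the straight segment from $b_0$ to $b$ read in the chart, a regular sectionally $C^\ell$-curve in $\varphi(E)$. By (ii) each $\gamma_b$ has horizontal lifts, so the translation of fibres $\rho_{\gamma_b}\colon\varphi^{-1}(b_0)\to\varphi^{-1}(b)$ is defined; it is a $C^\ell$-diffeomorphism, its inverse being translation along the reversed segment, and its differentiability is Lemma~2.2 of \cite{Wolf:DFEMCRM}. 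I then set $\Psi(b,x):=\rho_{\gamma_b}(x)$ and verify that $\Psi\colon U\times\varphi^{-1}(b_0)\to\varphi^{-1}(U)$ is fibre-preserving, that is $\varphi\circ\Psi(b,x)=b$, and that it is a $C^\ell$-diffeomorphism with $C^\ell$ inverse $x'\mapsto(\varphi(x'),\rho_{\gamma_{\varphi(x')}}^{-1}(x'))$. The delicate point in this step is the \emph{joint} $C^\ell$-dependence of $\rho_{\gamma_b}(x)$ on the pair $(b,x)$, which I would extract from smooth dependence of solutions of the horizontal-lift equation on initial data together with the parameter $b$, through which the family $\{\gamma_b\}$ varies smoothly.

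For (i) $\Rightarrow$ (iii) I would fix an arbitrary Ehresmann connection $\mathcal{H}$, a sectionally $C^\ell$-curve $\alpha\colon[0,1]\to\varphi(E)$, and a point $x\in\varphi^{-1}(\alpha(0))$, and attempt to build the horizontal lift. Using local triviality, cover the compact image $\alpha([0,1])$ by finitely many trivializing charts $\varphi^{-1}(U_j)\cong U_j\times F$ and pick a partition $0=t_0<\dots<t_N=1$ so that each $\alpha([t_{j-1},t_j])$ lies in a single $U_j$; by the uniqueness of horizontal lifts already recorded before the statement, it suffices to lift over one such subinterval and concatenate. Inside a single product chart the horizontal lift of $\alpha$ is the solution of a first-order ODE on $F$ governed by the connection, so local existence and uniqueness furnish a lift on a maximal subinterval of $[t_{j-1},t_j]$.

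The hard part will be precisely this last implication, and the genuine obstacle is that the ODE solution could in principle leave every compact subset of the fibre $F$ before reaching $t_j$, so that the maximal lift fails to extend to the closed interval. This finite-time escape cannot be excluded from the local product structure alone, and it is exactly the phenomenon that Wolf's framework is designed to control: the completeness carried by the Riemannian construction of $\mathcal{H}$ and the paracompactness hypotheses are what ultimately rule it out. I would therefore lean on the completeness arguments of \cite{Wolf:DFEMCRM} for this no-escape step rather than reprove them, after which the concatenated curve is the sought horizontal lift, giving (iii) and closing the cycle. I expect essentially all the real difficulty of Theorem~\ref{thm:EEFT} to be concentrated in (ii) $\Rightarrow$ (i) and in this escape estimate.
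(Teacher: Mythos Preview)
The paper does not supply a proof of Theorem~\ref{thm:EEFT}; it is simply quoted as Corollary~2.5 of \cite{Wolf:DFEMCRM} and then invoked as a black box in the proof of Lemma~\ref{lem:FoF=F}. There is therefore no argument in the paper to compare your attempt against.

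Independently of that, your treatment of (i)~$\Rightarrow$~(iii) has a real gap, and it is not repairable along the line you propose. You correctly isolate the obstruction---possible finite-time escape of the horizontal-lift ODE---but then appeal to ``the completeness carried by the Riemannian construction of $\mathcal{H}$''. In statement~(iii) the connection $\mathcal{H}$ is \emph{arbitrary}; it need not arise as the orthogonal complement for any metric, so no Riemannian completeness is available, and paracompactness alone does not help. In fact the implication (i)~$\Rightarrow$~(iii), read literally as transcribed here, is false: take $\varphi\colon\R^2\to\R$, $(x,y)\mapsto x$, which is a globally trivial bundle so (i) holds, together with the smooth Ehresmann connection $H_{(x,y)}=\operatorname{span}(\partial_x+y^2\,\partial_y)$. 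The horizontal lift of $\alpha(t)=t$ through $(0,1)$ solves $\dot y=y^2$, $y(0)=1$, hence $y(t)=1/(1-t)$, and escapes before $t=1$. So either Wolf's original Corollary~2.5 carries an extra hypothesis (properness of $\varphi$, compact fibre, or a restricted class of connections) that was dropped in the transcription, or (iii) has to be read with such a restriction understood; in either case your closing paragraph does not establish it. Your sketches of (iii)~$\Rightarrow$~(ii) and of (ii)~$\Rightarrow$~(i) via parallel transport are standard and fine.
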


It is easy to extend Theorem~\ref{thm:EEFT} when $E$ is a manifold with boundary $\partial E$ asking that the restriction $\phi|_{\partial E}\colon\partial E\to B$ is also a submersion and applying 
Theorem~\ref{thm:EEFT} to the restriction of $\varphi$ to the interior of $E$ and to the restriction of $\varphi$ to the boundary $\partial E$.

\begin{proof}[Proof of Lemma~\ref{lem:FoF=F}]
Since $g$ is a $C^\ell$-locally trivial fibration, by Theorem~\ref{thm:EEFT} there exists an Ehresmann connection $\mathcal{H}^g$ for $g$, relative
to which every sectionally $C^\ell$-curve in $g(Y)\subset Z$ has horizontal lifts. For any $y\in Y$ we have $T_yY=V_y^g\oplus H_y^g$ where $V_y^g$ and $H_y^g$ are respectively the vertical and 
horizontal subspaces of $T_yY$. Recall that $V_y^g$ is the tangent space of the fibre $g^{-1}(g(y))$ at $y$ and that $H_y^g$ projects isomorphically onto $T_{g(y)}Z$ under $D_yg$.

Analogously, there exists an Ehresmann connection $\mathcal{H}^f$ for $f$, relative
to which every sectionally $C^\ell$-curve in $f(X)\subset Y$ has horizontal lifts. For any $x\in X$ we have $T_xX=V_x^f\oplus H_x^f$ where $V_x^f$ and $H_x^f$ are respectively the vertical and 
horizontal subspaces of $T_xX$. Recall that $V_x^f$ is the tangent space of the fibre $f^{-1}(f(x))$ at $x$ and that $H_x^f$ projects isomorphically onto $T_{f(x)}Y=V_{f(x)}^g\oplus H_{f(x)}^g$ under 
$D_xf$. This isomorphism induces a direct sum decomposition $H_x^f=\tilde{H}_x^f\oplus H_x^{g\circ f}$, where $\tilde{H}_x^f$ and $H_x^{g\circ f}$ correspond respectively to $V_{f(x)}^g$ and 
$H_{f(x)}^g$. Hence we have $T_xX=V_x^f\oplus \tilde{H}_x^f\oplus H_x^{g\circ f}$. Set $V_x^{g\circ f}=V_x^f\oplus \tilde{H}_x^f$, then we have $T_xX=V_x^{g\circ f}\oplus H_x^{g\circ f}$ and we claim 
that $V_x^{g\circ f}$ is the vertical space of $g\circ f$ at $x$ and that the distribution $\mathcal{H}^{g\circ f}=\{H_x^{g\circ f}\}_{x\in X}$ is an Ehresmann connection for $g\circ f$. Firstly, it 
is easy to see that $H_x^{g\circ f}$ is mapped isomorphically onto $T_{g(f(x))}Z$ under $D_x(g\circ f)$
\[
 D_{f(x)}g\bigl(D_xf(H_x^{g\circ f})\bigr)=D_{f(x)}g(H_{f(x)}^g)=T_{g(f(x))}Z.
\]
To see that $V_x^{g\circ f}$ is the vertical space of $g\circ f$ at $x$ we need to check two cases: 1) if $v\in V_x^f$ we have that $D_xf(v)=0$, then $D_{f(x)}g\bigl(D_xf(v)\bigr)=D_{f(x)}g(0)=0$, 2) 
if $v\in \tilde{H}_x^f$ then $D_xf(v)\in V_y^g$ and $D_{f(x)}g\bigl(D_xf(v)\bigr)=0$.

Let $z\in (g\circ f)(X)\subset Z$, $x\in (g\circ f)^{-1}(z)$ and $y=f(x)\in g^{-1}(z)\subset Y$. Let $\alpha\colon I\to Z$ be a sectionally $C^\ell$-curve in $(g\circ f)(X)\subset Z$ with $\alpha(0)=z$ 
and let $\alpha_y\colon I\to f(X)\subset Y$ be its horizontal lift relative to $\mathcal{H}^g$, so we have that $\alpha_y(0)=y$ and $g\circ\alpha_y=\alpha$. Now let $\alpha_x\colon I\to X$ be the 
horizontal lift of $\alpha_y$ relative to $\mathcal{H}^f$, so we have that $\alpha_x(0)=x$ and $f\circ\alpha_x=\alpha_y$. Thus we have $g\circ f\circ\alpha_x=g\circ\alpha_y=\alpha$, so $\alpha_x$ is 
a lift of $\alpha$ by $g\circ f$. To conclude the proof we need to check that $\alpha_x$ is a horizontal lift relative to the Ehresmann connection $\mathcal{H}^{g\circ f}$. Since $\alpha_x\colon I\to 
X$ is the horizontal lift of $\alpha_y$ relative to $\mathcal{H}^f$ we have that $\alpha_x'(t)\in H_{\alpha_x(t)}^f=\tilde{H}_{\alpha_x(t)}^f\oplus H_{\alpha_x(t)}^{g\circ f}$ for every $t\in I$. We 
claim that $\alpha_x'(t)\in H_{\alpha_x(t)}^{g\circ f}$, suppose this is not true, that $\alpha_x'(t)\in\tilde{H}_{\alpha_x(t)}^f$, then $D_{\alpha_x(t)}f(\alpha_x'(t))=\alpha_y'(t)\in 
V_{f(\alpha_x(t))}^g$, but this contradicts the fact that $\alpha_y$ is a horizontal lift of $\alpha$ relative to the Ehresmann connection $\mathcal{H}^g$.
\end{proof}


\begin{thebibliography}{10}


\bibitem{dosSantos:ERMFQHS}
R.~N. {Ara{\'u}jo dos Santos}.
\newblock {Equivalence of real {M}ilnor fibrations for quasi-homogeneous
  singularities}.
\newblock {\em Rocky Mountain J. Math.}, 42(2):439--449, 2012.

\bibitem{dosSantos-etal:FHSMG}
Raimundo~N. {Ara{\'u}jo dos Santos}, Maico~F. Ribeiro, and Mihai Tib{\u a}r.
\newblock {Fibrations of highly singular map germs}.
\newblock {\em Bull. Sci. Math.}, 155:92--111, 2019.

\bibitem{Bobadilla-Menegon:BM} 
Javier Fern{\'a}ndez de Bobadilla and Aur{\'e}lio Menegon.  
\newblock {The boundary of the Milnor fibre of complex and real analytic non-isolated singularities}.
\newblock {\em Geom. Dedicata}, 173:143--162, 2014.

\bibitem{Bodin-Pichon:MFBSFL}
Arnaud Bodin and Anne Pichon.
\newblock {Meromorphic functions, bifurcation sets and fibred links}.
\newblock {\em Math. Res. Lett.}, 14(3):413--422, 2007.

\bibitem{Cisneros-Molina:Join}
Jos{\'e}~Luis Cisneros-Molina.
\newblock Join theorem for polar weighted homogeneous singularities.
\newblock In {\em Singularities {II}}, volume 475 of {\em Contemp. Math.},
  pages 43--59. Amer. Math. Soc., Providence, RI, 2008.

\bibitem{Cisneros-dosSantos:MilFib}
Jos{\'e}~Luis Cisneros-Molina and Raimundo~N. {Ara{\'u}jo dos Santos}.
\newblock About the existence of {M}ilnor fibrations.
\newblock In {\em Real and complex singularities}, volume 380 of {\em London
  Math. Soc. Lecture Note Ser.}, pages 82--103. Cambridge Univ. Press,
  Cambridge, 2010.

\bibitem{Cisneros-Grulha-Seade:OTRAM}
Jos{\'e}~Luis Cisneros-Molina, Nivaldo~G. {Grulha Jr.}, and Jos{\'e} Seade.
\newblock On the topology of real analytic maps.
\newblock {\em International Journal of Mathematics}, 25(7), 2014.

\bibitem{Cisneros-Menegon:EMFMLF}
Jos{\'e}~Luis Cisneros-Molina and Aur{\'e}lio Menegon-Neto.
\newblock Equivalence of {M}ilnor and {M}ilnor-{L}{\^e} fibrations for real
  analytic maps.
\newblock To appear in International Journal of Mathematics, 2019.

\bibitem{Cisneros-Seade-Snoussi:MilnorRef}
Jos{\'e}~Luis Cisneros-Molina, Jos{\'e} Seade, and Jawad Snoussi.
\newblock Refinements of {M}ilnor's fibration theorem for complex
  singularities.
\newblock {\em Adv. Math.}, 222(3):937--970, 2009.

\bibitem{Cisneros-Seade-Snoussi:d-regular}
Jos{\'e}~Luis Cisneros-Molina, Jos{\'e} Seade, and Jawad Snoussi.
\newblock Milnor fibrations and {$d$}-regularity for real analytic
  singularities.
\newblock {\em Internat. J. Math.}, 21(4):419--434, 2010.

\bibitem{Cisneros-Seade-Snoussi:RSCG}
Jos{\'e}~Luis Cisneros-Molina, Jos{\'e} Seade, and Jawad Snoussi.
\newblock Milnor {F}ibrations for {R}eal and {C}omplex {S}ingularities.
\newblock In {Jos{\'e} Ignacio Cogolludo-Agust\'{\i}n} and {Eriko Hironaka},
  editors, {\em Topology of {A}lgebraic {V}arieties and {S}ingularities},
  volume 538 of {\em Conteporary Mathematics}, pages 345--362. American
  Mathematical Society, 2011.

\bibitem{Cisneros-Seade-Snoussi:MFCdRAMG}
Jos{\'e}~Luis Cisneros-Molina, Jos{\'e} Seade, and Jawad Snoussi.
\newblock Milnor fibrations and the concept of {$d$}-regularity for analytic
  map germs.
\newblock In {\em Real and complex singularities}, volume 569 of {\em Contemp.
  Math.}, pages 1--28. Amer. Math. Soc., Providence, RI, 2012.

\bibitem{Dutertre-dosSantos:TRMFNIS}
Nicolas Dutertre and Raimundo~N. {Ara{\'u}jo dos Santos}.
\newblock {Topology of real {M}ilnor fibrations for non-isolated
  singularities}.
\newblock {\em Int. Math. Res. Not. IMRN}, (16):4849--4866, 2016.

\bibitem{Ekedahl:CBPBP}
\href{http://mathoverflow.net/users/4008/torsten- ekedahl}{Torsten Ekedahl}.
\newblock Is the composition of two bundle projections necessarily a bundle
  projection?
\newblock MathOverflow.
\newblock
  URL:\href{http://mathoverflow.net/q/74722}{http://mathoverflow.net/q/74722}.

\bibitem{Husemoller:Bundles}
Dale Husemoller.
\newblock {\em Fibre Bundles}.
\newblock Graduate Texts in Mathematics 20. Springer-Verlag, third edition,
  1993.

\bibitem{Lamotke:TCPVASL}
Klaus Lamotke.
\newblock {The topology of complex projective varieties after {S}.
  {L}efschetz}.
\newblock {\em Topology}, 20(1):15--51, 1981.

\bibitem{LopezdeMedrano:SHQM}
Santiago {L{\'o}pez de Medrano}.
\newblock {Singularities of homogeneous quadratic mappings}.
\newblock {\em Rev. R. Acad. Cienc. Exactas F\'{\i} s. Nat. Ser. A Math.
  RACSAM}, 108(1):95--112, 2014.

\bibitem{Mckay:MCC}
Benjamin McKay.
\newblock {Morphisms of {C}artan {C}onnections}.
\newblock arXiv:0802.1473, 2010.

\bibitem{Menegon-Seade:OLMFRAM}
Aur{\'e}lio {Menegon Neto} and Jos{\'e} Seade.
\newblock {On the {L}\^e-{M}ilnor fibration for real analytic maps}.
\newblock {\em Math. Nachr.}, 2016.
\newblock Early view version.

\bibitem{Milnor:ISH}
John Milnor.
\newblock {On isolated singularities of hypersurfaces}.
\newblock Preprint, June 1966.

\bibitem{Milnor:SPCH}
John Milnor.
\newblock {\em Singular points of complex hypersurfaces}.
\newblock Annals of Mathematics Studies, No. 61. Princeton University Press,
  Princeton, N.J., 1968.

\bibitem{Oka:NDMF}
Mutsuo Oka.
\newblock {Non-degenerate mixed functions}.
\newblock {\em Kodai Math. J.}, 33(1):1--62, 2010.

\bibitem{Pham:SingInt}
Fr{\'e}d{\'e}ric Pham.
\newblock {\em Singularities of integrals}.
\newblock Universitext. Springer London, 2011.

\bibitem{Pichon:RAGOBD3S}
Anne Pichon.
\newblock {Real analytic germs {$f\overline g$} and open-book decompositions of
  the 3-sphere}.
\newblock {\em Internat. J. Math.}, 16(1):1--12, 2005.


\bibitem{Pichon-Seade:FMAfgb}
Anne Pichon and Jos{\'e} Seade.
\newblock {Fibred multilinks and singularities {$f\overline g$}}.
\newblock {\em Math. Ann.}, 342(3):487--514, 2008.


\bibitem{Seade:onfto}
Jos{\'e} Seade.
\newblock {On Milnor's fibration theorem and its offspring after 50 years}.
\newblock {\em Bull. Amer. Math. Soc.}, 56:281--348, 2019.


\bibitem{RS}
Maria Aparecida~Soares Ruas and Raimundo Nonato~Ara{\'u}jo dos Santos.
\newblock {Real {M}ilnor fibrations and (c)-regularity}.
\newblock {\em Manuscripta Math.}, 117(2):207--218, 2005.

\bibitem{Ruas-Seade-Verjovsky:RSMF}
Maria Aparecida~Soares Ruas, Jos{\'e} Seade, and Alberto Verjovsky.
\newblock {On real singularities with a {M}ilnor fibration}.
\newblock In A.~Libgober and M.~Tibar, editors, {\em {Trends in
  singularities}}, {Trends Math.}, pages 191--213. Birkh{\"a}user, Basel, 2002.


\bibitem{Wolf:DFEMCRM}
Joseph~A. Wolf.
\newblock {Differentiable fibre spaces and mappings compatible with
  {R}iemannian metrics}.
\newblock {\em Michigan Math. J.}, 11:65--70, 1964.

\end{thebibliography}


\end{document}